\documentclass[11pt,letterpaper]{amsart}
\usepackage{amsmath,amssymb}
\usepackage{graphicx}

\usepackage[colorinlistoftodos]{todonotes}
\newcommand{\ve}{\varepsilon}
\newcommand{\supp}{\operatorname{supp}}
\newcommand{\ass}{\quad\mbox{as}\quad}

\newtheorem{theorem}{Theorem}[section]
\newtheorem{proposition}[theorem]{Proposition}
\newtheorem{lemma}[theorem]{Lemma}

\numberwithin{equation}{section}

\begin{document}
	\title{A Supercritical Problem in Dimension Two}
	
	\author[M. del Pino]{Manuel del Pino}
	\address{Department of Mathematical Sciences, University of Bath, Bath BA2 7AY, United Kingdom.}
	\email{m.delpino@bath.ac.uk}
	
	\author[I. Guerra]{Ignacio Guerra}
	\address{Facultad de Ciencia, Departamento de Matem\'atica y Ciencia de la Computaci\'on, Universidad de Santiago de Chile (USACH), Las Sophoras 173, Estaci\'on Central, 9170020 Santiago, Chile.}
	\email{ignacio.guerra@usach.cl}
	
	\author[M. Musso]{Monica Musso}
	\address{Department of Mathematical Sciences, University of Bath, Bath BA2 7AY, United Kingdom.}
	\email{mm2683@bath.ac.uk}

	\begin{abstract}
		Let $\Omega\subset\mathbb R^2$ be a smooth bounded domain containing the
		origin and invariant under reflection across the coordinate axes, and let
		$0<\lambda<\lambda_1(\Omega)$, where $\Lambda_1 (\Omega)$ is the first eigenvalue for $-\Delta$ on $\Omega$ under Dirichlet boundary conditions.  For every fixed integer $k\ge1$ and all
		sufficiently small $\varepsilon>0$, we construct a positive solution of
		\[
		-\Delta u=\lambda u e^{u^{2+\varepsilon}} \quad\hbox{in }\Omega,
		\qquad
		u=0\quad\hbox{on }\partial\Omega,
		\]
		which blows up at the origin as a tower of $k$ Liouville bubbles.  The
		concentration scales are strongly separated.  The proof is based on a
		Lyapunov--Schmidt reduction adapted to these different scales.
	\end{abstract}
	\keywords{Moser--Trudinger nonlinearity, slightly supercritical elliptic equation, bubble tower, blow-up, Lyapunov--Schmidt reduction}
	\subjclass[2020]{35J60, 35B44, 35B33}
	\maketitle

	\section{Introduction}
	Let $\Omega\subset\mathbb R^2$ be a smooth bounded domain containing the
	origin and invariant under the two coordinate reflections. We consider
	\begin{equation}\label{main}
		\begin{cases}
			-\Delta u=\lambda u e^{u^p} & \text{in }\Omega,\\
			u>0 & \text{in }\Omega,\\
			u=0 & \text{on }\partial\Omega,
		\end{cases}
		\qquad p=2+\ve,
	\end{equation}
	where $0<\lambda<\lambda_1(\Omega)$, with $\lambda_1(\Omega)$ the first
	Dirichlet eigenvalue of $-\Delta$, and $\ve>0$ is small. We seek
	solutions with the symmetries
	\begin{equation}\label{simm}
		u(x_1,x_2)=u(-x_1,x_2)=u(x_1,-x_2).
	\end{equation}
	
	The exponent $p=2$ is critical in dimension two because of the
	Moser--Trudinger inequality \cite{moser,trudinger}.  Existence,
	compactness and blow-up for equations with critical exponential growth
	have been studied extensively; see, for instance,
	\cite{adi,adi2,adi3,adidruet,adistruwe,ap,dmf,druet}.  We are interested
	here in the slightly supercritical problem $p=2+\ve$ and in solutions
	whose blow-up at the origin occurs at several different scales.
	
	Bubble towers for slightly supercritical power nonlinearities were
	constructed in \cite{ddmbn}.  Related multi-bubbling phenomena for
	exponential nonlinearities, in radial or quasilinear settings, were
	studied in \cite{ddmphase,ddmexp}.  More recently, Naimen
	\cite{naimen1,naimen2} studied the supercritical exponential problem in a
	disc by a different radial scaling approach, obtaining detailed
	concentration and oscillation estimates and consequences for the
	bifurcation diagram and multiplicity.  The result below concerns the
	semilinear problem \eqref{main} on a general planar domain satisfying the
	two reflection symmetries.  The basic local model is the Liouville bubble,
	as in \cite{bp,dkm,egp}.
	
	We use the Green function $G_\lambda$ of $-\Delta-\lambda$, normalized by
	\[
	-\Delta_xG_\lambda(x,y)-\lambda G_\lambda(x,y)=8\pi\delta_y
	\quad\hbox{in }\Omega,
	\qquad
	G_\lambda(x,y)=0
	\quad\hbox{on }\partial\Omega.
	\]
	Its regular part and Robin function are
	\begin{equation*}\label{acca}
		H_\lambda(x,y)=4\log\frac1{|x-y|}-G_\lambda(x,y),
	\end{equation*}
	\begin{equation}\label{defrobin}
		h_\lambda(x)=H_\lambda(x,x).
	\end{equation}
	
	Our main result is the following.

	\begin{theorem}\label{teo}
		Assume that $\Omega\subset\mathbb R^2$ is smooth, bounded, contains the
		origin, and is invariant under the two coordinate reflections, see \eqref{simm}. Let
		$0<\lambda<\lambda_1(\Omega)$ and let $k\ge1$ be fixed. Then, for all
		$\ve>0$ sufficiently small, problem \eqref{main} has a positive solution
		$u_\ve$ with a tower of $k$ bubbles concentrating at the origin.
		
		More precisely, there exist scales
		\[
		d_k\ll d_{k-1}\ll\cdots\ll d_1\longrightarrow0 \quad 
		\]
		and heights 
		\[
		\gamma_j
		=
		\alpha_j|\log\ve|^{1/2}\ve^{1/2-j}(1+o(1)),
		\qquad
		\alpha_j=
		\frac{2^{j-1}(k-j)!}{(k-1)!}\sqrt{\frac2k}
		, \quad j=1, \ldots , k\]
		as $\ve \to 0$, such that, for every fixed $j$ and every
		compact set $K\Subset\mathbb R^2\setminus\{0\}$,
		\[
		p\gamma_j^{p-1}
		\bigl[u_\ve(d_jy)-\widehat\gamma_j\bigr]
		\longrightarrow
		w(y):=\log\frac8{(1+|y|^2)^2}
		\]
		uniformly for $y\in K$. Here $\widehat\gamma_j$ are the effective local levels  and satisfy 
		$\widehat\gamma_j/\gamma_j\to1$ as $\ve \to 0$.
		
		The concentration scales satisfy, as $\ve \to 0$,
		\[
		\log d_j^2
		=
		-\gamma_j^p+O\!\left(\ve\gamma_j^p+\log\gamma_j\right),
		\]
		and in particular
		\[
		\frac{d_j}{d_{j-1}}\longrightarrow0,
		\qquad j=2,\ldots,k.
		\]
	\end{theorem}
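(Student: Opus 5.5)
We will construct $u_\ve$ by a Lyapunov--Schmidt reduction around an explicit ansatz built from $k$ Liouville bubbles. For parameters $d_1\gg d_2\gg\cdots\gg d_k$ and levels $\gamma_j$, let
\[
U_j(x)=\widehat\gamma_j+\frac{1}{p\gamma_j^{p-1}}\Bigl(w\bigl(\tfrac{x}{d_j}\bigr)+\omega_j\bigl(\tfrac{x}{d_j}\bigr)\Bigr),
\]
where $\omega_j$ are second-order radial corrections. These come from the linearized Liouville equation $-\Delta\omega-e^{w}\omega=e^w(\text{quadratic terms in }w)$, produced by expanding $(\gamma+s)^p=\gamma^p+p\gamma^{p-1}s+\tfrac{p(p-1)}2\gamma^{p-2}s^2+\dots$, as in the Moser--Trudinger bubble analyses. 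Write $PU_j$ for the projection onto $H^1_0(\Omega)$ with the $-\Delta-\lambda$ operator. Away from $0$ this projection is approximately $\frac{c_j}{p\gamma_j^{p-1}}G_\lambda(x,0)$, with $c_j$ close to the bubble mass. The ansatz is $V=\sum_j PU_j$, and it is symmetric under \eqref{simm}. The key matching conditions require that, in the annulus $d_{j+1}\ll|x|\ll d_{j-1}$, the function $V$ coincide with $U_j$ to leading order. In that region the inner bubbles $PU_i$ with $i>j$ look like $\frac{4}{p\gamma_i^{p-1}}\log\frac1{|x|}+\text{const}$, the outer ones are nearly constant, and $U_j$ itself contributes $\widehat\gamma_j$ plus a logarithm.

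Equating constants and the coefficient of $\log|x|$ gives an algebraic system for $(\gamma_j,d_j)$. It has three ingredients. The first is the scale relation $\log d_j^2=-\gamma_j^p+O(\ve\gamma_j^p+\log\gamma_j)$, obtained from $\lambda\gamma_j^2 p\gamma_j^{p-2}e^{\gamma_j^p}d_j^2\approx 1$. The second is the level relation $\widehat\gamma_j=\sum_{i\le j}(\text{const}_i)+\sum_{i>j}\frac{4\log(1/d_i)}{p\gamma_i^{p-1}}\cdots$, which comes from the log tails. The third is the supercritical effect. Since $p=2+\ve$, one has $\gamma^p\approx\gamma^2(1+\ve\log\gamma)$. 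Hence the mass of each bubble, $\frac{8\pi}{p\gamma_j^{p-1}}$, and the log coefficients $2\gamma_j/\gamma_j^{p}$ differ from their critical values at relative order $\ve\log\gamma_j$. This mismatch is what allows several levels to coexist.

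Solving the recursion to leading order, the condition between consecutive bubbles takes the form $\gamma_{j+1}\approx \frac{2}{\ve\,(k-j)}\gamma_j$-type multiplicative relations. This gives $\gamma_j\sim\ve^{1/2-j}$ with the combinatorial constants $\alpha_j=\frac{2^{j-1}(k-j)!}{(k-1)!}\sqrt{2/k}$. The normalization of $\gamma_1$ is fixed by the outermost matching with the boundary through $H_\lambda$, giving $\gamma_1^2\sim\frac{2}{k}|\log\ve|/\ve$. I will first verify this leading-order solution exactly. I will then keep free parameters $\delta_j$, with $d_j=d_j^0(1+\delta_j)$ or $\gamma_j$ perturbed, to be fixed at the reduced stage.

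Next we look for $u=V+\phi$ and solve the projected problem
\[
-\Delta\phi-\lambda f'(V)\phi=E+N(\phi)+\sum_j c_j\,\chi_j Z_{0,j},
\]
where $Z_{0,j}$ is the radial kernel $\frac{1-|y|^2}{1+|y|^2}$ at scale $d_j$. The translation kernels are removed by the symmetry \eqref{simm}. To obtain a priori estimates we use a weighted $L^\infty$ norm adapted to each annulus, namely $\|h\|_*=\sup_x\big(\sum_j d_j^{-2}(1+|x|/d_j)^{-2-\sigma}\big)^{-1}|h|$. Linear invertibility follows by the standard contradiction and blow-up argument at each scale. The strong separation $d_j/d_{j-1}\to0$ decouples the scales, and the condition $\lambda<\lambda_1$ controls the outer region. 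The error $E$ must be shown small in $\|\cdot\|_*$ by a factor $o(1)$ times powers of $\gamma_j^{-1}$. Fixed point then gives $\phi$.

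The final step is to make each $c_j$ vanish. I project onto $Z_{0,j}$ and show that $c_j=0$ is equivalent to a system $M\delta=o(1)$ with $M$ an invertible (triangular-plus-small) matrix that comes from differentiating the matching relations. Brouwer's theorem then gives a solution. The convergence $p\gamma_j^{p-1}[u(d_jy)-\widehat\gamma_j]\to w$ follows because $\phi$ and the other bubbles contribute $o(\gamma_j^{1-p})$ on compacts of $\mathbb R^2\setminus\{0\}$ at scale $d_j$. Positivity follows from the maximum principle, since the negative part is small.

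I expect the main obstacle to be the error estimate together with the reduced system. The interaction terms between bubbles are only logarithmically small, of size $\ve\log\gamma$ and $\log\gamma/\gamma^2$. We therefore need the expansion of $e^{V^p}$ in each annulus to second order, with the corrections $\omega_j$ included, so that the residual beats the size of the solvability coefficients. My first attempt would be to track the error explicitly in $y=x/d_j$ variables, using $\gamma_j^p$ expansions up to $O(\gamma_j^{-4})$, and to check that the $Z_{0,j}$-projections of the leading errors vanish by the matching relations.
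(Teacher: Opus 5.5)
Your overall strategy is the paper's: projected Liouville bubbles, reflection symmetry to remove the translation modes, projection on the dilation modes, and a degree argument. But two ingredients on which the proof hinges are missing, and as written the scheme does not close.

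\textbf{The functional setting.} Your weight $\sum_j d_j^{-2}(1+|x|/d_j)^{-2-\sigma}$, with $\phi$ measured in $L^\infty$ as your sketch suggests, is blind to the amplitudes $b_j=\gamma_j^{1-p}$. These drop by the factor $(\ve m_{j+1})^{p-1}$ from one core to the next.
\begin{itemize}
\item At the $j$-th core $\phi$ enters the exponent multiplied by $p\gamma_j^{p-1}$, so you need $|\phi|=o(b_j)$ there; the paper gets $O(\ve b_j)$.
\item The error near the first core is of size $\ve b_1V_1$, so $\|E\|_*\gtrsim\ve b_1$ in your norm. Even a loss-free bound $\|\phi\|_\infty\lesssim\|E\|_*$ then only gives $|\phi|\lesssim\ve b_1$. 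This is comparable to $b_2$ and much larger than $b_j$ for $j\ge3$.
\item The usual two-dimensional $L^\infty$ theory for such weights also loses a factor $|\log d_k|\approx\gamma_k^p/2$.
\end{itemize}
Blow-up at each scale together with $\lambda<\lambda_1(\Omega)$ handles the cores and the exterior. The real difficulty is propagating smallness across annuli of logarithmic length $\approx\gamma_{j+1}^p/2$. On these annuli the admissible size of $\phi$ falls from $b_j$ to $b_{j+1}$, and $\Delta+\lambda+\sum_iV_i$ has a positive potential, so there is no free maximum principle. The paper handles this with several devices:
\begin{itemize}
\item a weight $\beta_\ve$ that is affine in $s=-\log|x|$, i.e.\ the harmonic interpolation between $b_j$ and $b_{j+1}$;
\item an error weight built from the $b_jV_j$ with logarithmic and far-tail factors, plus transition densities (after showing by convexity that $\lambda Ue^{U^p}$ is negligible inside each bridge, and keeping the cancellation $e^{U^p}-1$ on the outer bridge);
\item positive supersolutions $\beta_\ve+P_j$ controlled by first moments of the source.
\end{itemize}
The phrase ``scale separation decouples the scales'' does not replace this.

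\textbf{The constants and the choice of unknowns.} With $d_j=\rho_j$, the $j$-th projected bubble has height $\frac2p\gamma_j+O(\gamma_j^{1-p}\log\gamma_j)$ at its core. Adding the contribution $\approx\frac2p\gamma_{j-1}$ of the outer bubbles and the logarithms of the inner ones, the local level differs from $\gamma_j$ by a relative $O(\ve)$. Inside the exponential this is $O(\ve\gamma_j^p)\gg1$.
\begin{itemize}
\item Your relation $\lambda p\gamma_j^pe^{\gamma_j^p}d_j^2\approx1$ is therefore off by a factor $e^{O(\ve\gamma_j^p)}$. The scale $d_j$ must be slaved to the heights by an exact, unexpanded equation such as \eqref{defmuj}.
\item The unknowns must be the heights, not the $d_j$. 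Perturbing $d_j$ at fixed heights only shifts the local constant, which the projections do not see since $\int e^wZ=0$.
\item Your recursion $\gamma_{j+1}\approx\frac{2}{\ve(k-j)}\gamma_j$ is correct, but it does not follow from matching: the logarithmic coefficients of $\sum PU_j$ are fixed by construction. Nor does it come from a mismatch of relative order $\ve\log\gamma_j$. It is the content of $c_j=0$: in the paper's notation, projecting $(\sigma_j-1)w-4\sigma_j\Gamma_j\log|y|$ with $\sigma_j=a_j^{p-1}$, $\Gamma_j\approx\ve m_{j+1}$ and $-p\eta_j/\gamma_j\approx2\ve m_j$ gives $c_j\approx-C_\zeta\ve(\frac12-m_j+m_{j+1})$ for $j\ge2$.
\item The leading size of $\gamma_1$ is not fixed through $H_\lambda$. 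It comes from balancing $k\ve\gamma_1^p$ against $2\log\gamma_1^p$ in the first projection, with $h_\lambda(0)$ entering only at order $\gamma_1^{-p}\asymp\ve/|\log\ve|$. This is why the paper, which parametrizes $\gamma_1$ by $m_1$, must sum all the $c_j$ to reach the slow equation.
\end{itemize}
If you instead vary $\gamma_1$ by relative amounts, an order-$\ve$ triangular system should indeed be nondegenerate and enough for the $(1+o(1))$ statements. That works only after the exact constant matching, and with all projections computed to $o(\ve)$.
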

	
	The idea behind the construction of the solutions found in Theorem \ref{teo} is based on the following. 
	
	For positive parameters $\gamma$ and $\rho$ satisfying
	\[
	\rho^{-2}=\lambda p\gamma^p e^{\gamma^p},
	\]
	with the change of variables
	\[
	u(x) \ = \ \gamma + \frac 1{p \gamma^{p-1}} v \left (
	\frac{x}{\rho}\right ) ,
	\]
	problem \eqref{main} becomes
	\[
	\Delta v + \left(1+\frac{v}{p\gamma^p}\right)
	e^v\exp\left\{\gamma^p\left[
	\left(1+\frac{v}{p\gamma^p}\right)^p
	-1-\frac{v}{\gamma^p}
	\right]\right\}=0
	\quad\hbox{in }\Omega/\rho,
	\]
	with $v>-p\gamma^p$ in $\Omega/\rho$ and $v=-p\gamma^p$ on its boundary.
	Formally, if 
	\[
	\gamma\to\infty,\qquad \rho\to 0,
	\qquad \mbox{as}\quad \ve\to 0, 
	\]
	this equation tends to the Liouville equation
	\begin{equation}
		\label{liou}
		\Delta v + e^v = 0 \quad\mbox{in }\mathbb R^2.
	\end{equation}
	This suggests using Liouville profiles as the building blocks of the approximation. The centered finite-mass radial solutions of \eqref{liou} are
	\begin{equation*}
		\label{defw}
		\omega_{\mu } (y) = \log\frac{8\mu^2}{(\mu^2 + |y|^2)^2},
		\qquad \mu>0.
	\end{equation*}
	We combine $k$ such profiles.  The parameters $\gamma_j, \mu_j$, $j=1,\ldots, k$ asociated to each profile are chosen so that the constant terms match from one scale to the next.
	
	There are two points in the proof where the presence of several widely
	separated scales matters.  The first is the choice of the parameters
	$\mu_j$  entering the physical scales
	\[
	d_j=\mu_j\rho_j.
	\]
	where $\rho_j^{-2}=\lambda p\gamma_j^p e^{\gamma_j^p}$. 
	For the inner bubbles, an error which is small relative to
	$\gamma_j^p$ may still produce a large multiplicative error after
	exponentiation.  We therefore determine the constant part of each bubble
	by an exact scalar matching equation, and only afterwards expand the
	non-constant terms.
	
	The second point is the finite-dimensional system.  The  lineal projected problem contains coefficients $c_i$, $i=1,\ldots,k$ associated to each profile, to be determine to solve the problem. We test the error of a suitable ansatz with an element of the kernel of the linear problem.  The coefficients
	$c_2,\ldots,c_k$ give $k-1$ equations of order $\ve$ and determine the
	relative heights of the bubbles. The remaining parameter is seen only at
	the smaller scale
	\[
	G_1^{-1}\asymp\frac{\ve}{|\log\ve|},
	\qquad G_1=\gamma_1^p.
	\]
	At order $\ve$ the corresponding terms cancel after summing the
	projection equations.  This produces the scalar equation which determines
	the correct scale for $\gamma_1$, see \eqref{def1}. The same separation of scales also enters the linear theory:
	between two consecutive cores we use the logarithmic variable
	$s=-\log|x|$ and a positive bridge supersolution whose size interpolates
	between $\gamma_j^{1-p}$ and $\gamma_{j+1}^{1-p}$.
	
	The numerical diagrams in Figures~\ref{fig1}--\ref{fig2} illustrate the
	multiple turning pattern which motivated the construction. They correspond
	to the problem in a ball and are not used in the proof. The plotted points
	are the same numerical data as in the original computations; only the
	graphical presentation has been redrawn.
	
	\begin{figure}[ht]
		\centering
		\includegraphics[width=0.58\textwidth]{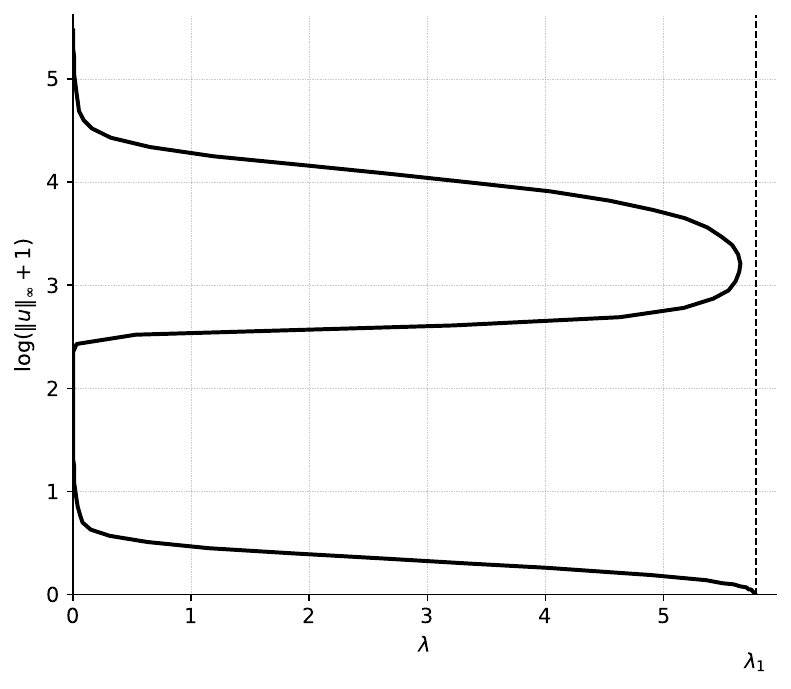}
		\caption{Numerical bifurcation diagram in a ball for $p=2.0002$: 
			$\log(\|u\|_\infty+1)$ versus $\lambda$. The dashed vertical line marks
			$\lambda_1$.}
		\label{fig1}
	\end{figure}
	
	\begin{figure}[ht]
		\centering
		\includegraphics[width=0.92\textwidth]{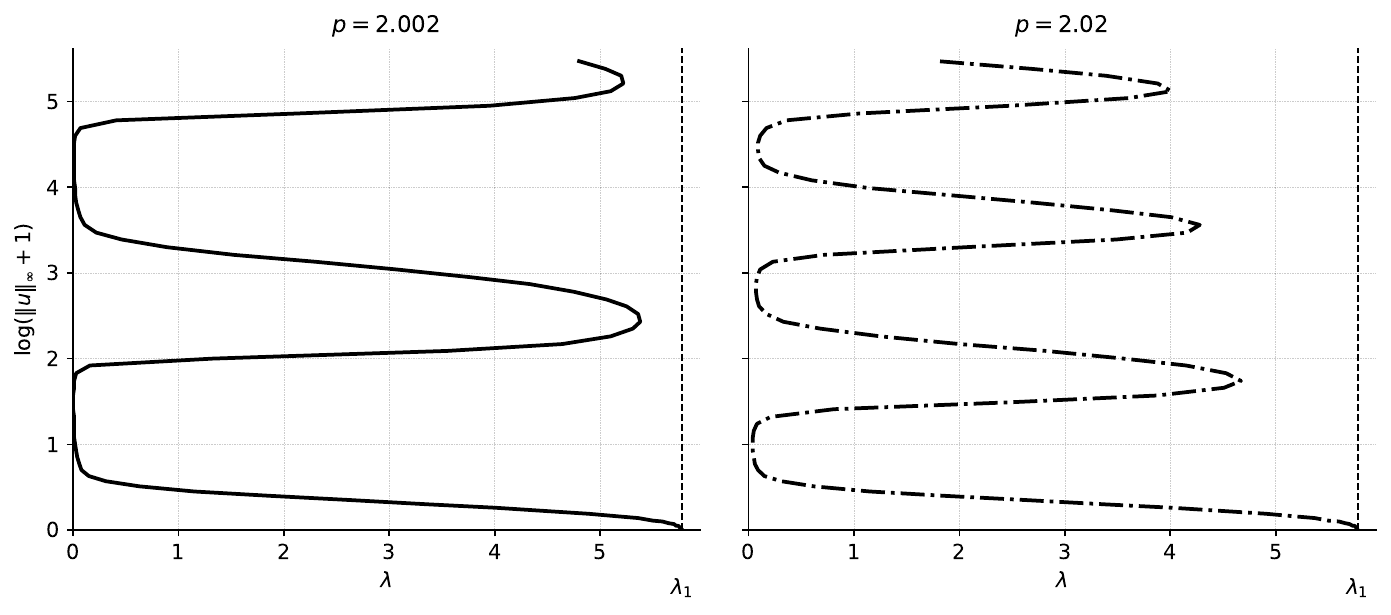}
		\caption{Numerical bifurcation diagrams in a ball. Left: $p=2.002$.
			Right: $p=2.02$. In both panels the dashed vertical line marks
			$\lambda_1$.}
		\label{fig2}
	\end{figure}

	The rest of the paper is organized as follows. Section~\ref{sec2}
	constructs the approximation, introduces the positive weighted norms, and
	estimates the error. Section~\ref{seclinear} proves the uniform projected
	linear theory. Section~\ref{sec:directprojection} computes the fast and slow
	projection equations and closes the reduction.
	
	\section{Construction of the approximation and error estimates}\label{sec2}
	
	We first construct the approximate solution $U$.  Its error is
	\begin{equation}\label{error}
		E:=\Delta U+\lambda Ue^{U^p}
		\quad\hbox{in }\Omega.
	\end{equation}
	The estimates will be made in weighted $L^\infty$ norms adapted to the
	different bubble scales.

	\subsection{Parameters and projected bubbles}
	
	Fix once and for all a small number $\delta\in(0,1)$ and let
	\begin{equation}\label{emme}
		m=(m_1,\ldots,m_k)\in[\delta,\delta^{-1}]^k.
	\end{equation}
	The first height $\gamma_1$ and preliminary scale $\rho_1$ are defined by
	\begin{equation}\label{def1}
		\rho_1^{-2}:=\lambda p\gamma_1^p e^{\gamma_1^p},
		\qquad
		\gamma_1^p e^{-\ve k\gamma_1^p/2}=m_1.
	\end{equation}
	Observe that the second equation has two solutions $\gamma$: with $\gamma_1$ we denote the large solution. Setting
	\[
	G_1:=\gamma_1^p,
	\qquad
	x_\ve:=\frac{k\ve G_1}{2},
	\]
	then
	\[
	x_\ve e^{-x_\ve}=\frac{k\ve m_1}{2},
	\]
	and the large branch is characterized by $x_\ve\to\infty$.  Taking
	logarithms gives
	\[
	x_\ve-\log x_\ve
	=
	|\log\ve|+\log\frac{2}{km_1}.
	\]
	It follows that
	\[
	x_\ve
	=
	|\log\ve|+\log|\log\ve|+O(1), \quad \ass \ve \to 0
	\]
	uniformly for $m_1\in[\delta,\delta^{-1}]$.
	Consequently,
	\begin{equation*}\label{G1size}
		G_1
		=
		\frac{2}{k\ve}
		\left(
		|\log\ve|+\log|\log\ve|+O(1)
		\right),
		\qquad
		G_1^{-1}\approx \frac{\ve}{|\log\ve|} \quad \ass \ve \to 0.
	\end{equation*}
	For $j=2,\ldots,k$, define
	\begin{equation}\label{defj}
		\rho_j^{-2}:=\lambda p\gamma_j^p e^{\gamma_j^p},
		\qquad
		\ve\frac{\gamma_j}{\gamma_{j-1}}=\frac1{m_j}.
	\end{equation}
	Then
	\[
	\gamma_j\to\infty,\qquad \rho_j\to0,
	\qquad j=1,\ldots,k,
	\]
	and
	\begin{equation}\label{explicitgamma}
		\gamma_j
		=
		\frac{\gamma_1}{\prod_{i=2}^j m_i}\,\ve^{-(j-1)},
		\qquad j=2,\ldots,k.
	\end{equation}
	
	Let $\mu_1,\ldots,\mu_k$ be positive parameters, to be fixed below by
	the exact matching equations, and set
	\begin{equation*}\label{djdef}
		d_j:=\mu_j\rho_j,\qquad j=1,\ldots,k.
	\end{equation*}
	We also set
	\begin{equation}\label{bjdef}
		b_j:=\gamma_j^{1-p},
		\qquad
		w_j(x):=w(x/d_j),
		\qquad
		V_j(x):=d_j^{-2}e^{w_j(x)}
		=\frac{8d_j^2}{(d_j^2+|x|^2)^2}.
	\end{equation}
	Lemma~\ref{lem:robust-scales} will show that
	$\log\mu_j=o(\gamma_j^p)$ and that the physical scales $d_j$ are strongly
	separated.
	For $j=1,\ldots,k$, set
	\begin{equation}\label{defWj}
		W_j(x)
		=
		\bar\gamma_j
		+\frac{1}{p\gamma_j^{p-1}}
		\omega_{\mu_j}(x/\rho_j),
	\end{equation}
	where the adjusted height $\bar\gamma_j$ is defined by
	\begin{equation*}
		\label{defbargammaj}
		\bar \gamma_j = \gamma_j + \eta_j,
	\end{equation*}
	where
	\begin{equation}
		\label{defetaj}
		\eta_1=0,\qquad
		\eta_j=-\sum_{i<j}\frac{-4\log d_i-h_\lambda(0)}{p\gamma_i^{p-1}} \quad j=2, \ldots , k,
	\end{equation}
	where $h_\lambda$ is the Robin function as defined in \eqref{defrobin}.
	After the matching parameters have been fixed, Lemma~\ref{lem:robust-scales}
	implies
	\[
	\eta_j
	=
	-\frac2p\gamma_{j-1}(1+o(1))
	=
	-\frac2p m_j\ve\gamma_j(1+o(1)),
	\qquad j=2,\ldots,k.
	\]
	Hence $\eta_j = o (\gamma_j)$ as $\ve\to 0$, for all $j=2, \ldots , k$.
	To impose the Dirichlet boundary condition, let $H_j^\lambda$ solve
	\begin{equation*}\label{Hjdef}
		\Delta H_j^\lambda
		=
		\lambda\left[
		\log\frac1{(d_j^2+|x|^2)^2}-H_j^\lambda
		\right] \quad 
		\text{in }\Omega,\quad 
		H_j^\lambda
		=
		\log\frac1{(d_j^2+|x|^2)^2}
		\quad \text{on }\partial\Omega.
	\end{equation*}
	We define
	\begin{equation*}
		U_j(x)
		=W_j(x)
		+\frac{1}{p\gamma_j^{p-1}}
		\Big[
		2\log(\lambda p\gamma_j^p)-\log(8\mu_j^2)
		-\ve\gamma_j^p 
		-p\gamma_j^{p-1}\eta_j-H_j^\lambda(x)
		\Big].
		\label{Ujfirst}
	\end{equation*}
	where $W_j$ is defined in \eqref{defWj}.
	Using $d_j=\mu_j\rho_j$, this can be written more simply as
	\begin{equation}\label{juan}
		U_j(x)
		=
		\frac{1}{p\gamma_j^{p-1}}
		\left[
		\log\frac1{(d_j^2+|x|^2)^2}-H_j^\lambda(x)
		\right].
	\end{equation}
	In particular, if $|x|=o(d_i)$,
	\[
	p\gamma_i^{p-1}U_i(x)
	=
	-4\log d_i-h_\lambda(0)+o(1),
	\]
	which explains the sign in \eqref{defetaj}.
	Fix once and for all a number $\nu\in(0,1)$. The difference
	$H_j^\lambda-H_\lambda(\cdot,0)$ solves a uniformly coercive Dirichlet
	problem whose interior right-hand side is
	\[
	-2\lambda\log\left(1+\frac{d_j^2}{|x|^2}\right).
	\]
	For $q=2/(2-\nu)>1$, this right-hand side has $L^q(\Omega)$ norm
	$O(d_j^{2-\nu})$, while the boundary discrepancy is $O(d_j^2)$.
	Standard $W^{2,q}$ estimates therefore give
	\begin{equation*}\label{Hj-approx}
		\|H_j^\lambda-H_\lambda(\cdot,0)\|_{L^\infty(\Omega)}
		\le C_\nu d_j^{2-\nu}.
	\end{equation*}
	In particular,
	\begin{equation*}
		\label{Ujfuori}
		U_j(x)
		=
		\frac{1}{p\gamma_j^{p-1}}
		\bigl[G_\lambda(x,0)+o(1)\bigr]
		\qquad(\ve\to0)
	\end{equation*}
	uniformly on compact sets of $\Omega \setminus \{ 0 \}$.
	Moreover, \eqref{juan} gives
	\[
	(-\Delta-\lambda)U_j
	=
	\frac{b_j}{p}V_j>0,
	\qquad U_j=0\quad\hbox{on }\partial\Omega.
	\]
	Since $\lambda<\lambda_1(\Omega)$, the maximum principle for
	$-\Delta-\lambda$ yields
	\begin{equation*}\label{Upositive}
		U_j>0\quad\hbox{in }\Omega,
		\qquad U=\sum_{j=1}^kU_j>0.
	\end{equation*}
	
	We define the approximate solution to be
	\begin{equation}
		\label{approxisol}
		U(x)=\sum_{j=1}^kU_j(x).
	\end{equation}
	
	It remains to choose the shape parameters $\mu_j$. They are determined so
	that the $j$-th profile matches the total approximation on its physical
	scale $d_j=\mu_j\rho_j$. Equivalently, after writing $x=d_jy$, its leading
	nonconstant part is the normalized Liouville profile $w(y)$.
	To formulate this matching, introduce
	\begin{equation}
		\label{defGammaj}
		\Gamma_j
		=
		\sum_{i>j}\left(\frac{\gamma_j}{\gamma_i}\right)^{p-1},
		\qquad j=1,\ldots,k-1,
		\qquad
		\Gamma_k=0.
	\end{equation}
	For $j=1,\ldots,k-1$,
	\begin{equation*}
		\label{estGammaj}
		\Gamma_j
		=
		\left(\frac{\gamma_j}{\gamma_{j+1}}\right)^{p-1}(1+o(1)),
		\qquad j=1,\ldots,k-1.
	\end{equation*}
	
	\subsection{Exact matching of the constants}
	
	We impose the constant matching exactly.  The equations are solved
	successively in $j$: for $j=1$ one has $\eta_1=0$; once $d_1,\ldots,d_{j-1}$
	have been determined, $\eta_j$ is known and the equation below determines
	$\mu_j$.  This removes any circularity in the definition of the scales.
	
	Set
	\[
	G_j:=\gamma_j^p,\qquad L_j:=\log\mu_j^2.
	\]
	Using $d_j^2=\mu_j^2\rho_j^2$ and
	$\rho_j^{-2}=\lambda pG_j e^{G_j}$, we have
	\[
	-4\log d_j
	=
	-2L_j+2G_j+2\log(\lambda pG_j).
	\]
	
	We explain the matching condition before writing it down. At the $j$-th
	scale, $x=d_jy$, the sum of the projected bubbles has the form
	\[
	\frac{U(d_jy)}{\gamma_j}
	=
	1+\frac{1}{pG_j}
	\left\{w(y)-4\Gamma_j\log|y|+\hbox{constant term}\right\}
	+\hbox{smaller terms}.
	\]
	The constant term contains $L_j=\log\mu_j^2$ and is precisely the
	quantity $\mathcal B_j(L_j)$ introduced below. If one first expands this
	constant term in powers of $\ve$, an error which is small relative to
	$G_j$ need not be small after exponentiation, especially for the inner
	bubbles. We therefore keep this constant part unexpanded. Writing
	\[
	a_j(L)=1+\frac{\mathcal B_j(L)}{pG_j},
	\]
	the potentially large constant factor in the nonlinear term is
	\[
	\mu_j^2\exp\{G_j(a_j(L)^p-1)\}.
	\]
	The equation defining $L_j$ is chosen so that this factor is exactly one.
	The remaining prefactor $a_j=1+O(\ve)$ is harmless and is retained in the
	subsequent expansion.
	For a real variable $L$, define
	\begin{align}
		{\mathcal B}_j(L)
		:={}&-2L-\log8-h_\lambda(0)
		+2\log(\lambda pG_j)-\ve G_j-p\gamma_j^{p-1}\eta_j
		\nonumber\\
		&+\Gamma_j\bigl[
		-2L+2G_j+2\log(\lambda pG_j)-h_\lambda(0)
		\bigr],
		\label{Bjexact}
	\end{align}
	and
	\[
	a_j(L):=1+\frac{{\mathcal B}_j(L)}{pG_j}.
	\]
	We choose $L_j$ as the small solution of the exact matching equation
	\begin{equation}\label{defmuj}
		L_j+G_j\bigl[a_j(L_j)^p-1\bigr]=0.
	\end{equation}
	The quantity $a_j\gamma_j$ is the local height of the full approximation
	at the $j$-th core.  Once the small solution $L_j$ has been chosen, set
	\[
	\mu_j=e^{L_j/2},
	\qquad
	d_j=\mu_j\rho_j.
	\]
	
	\begin{lemma}[exact matching and robust scales]\label{lem:robust-scales}
		For every fixed $\delta>0$ and all $m\in[\delta,\delta^{-1}]^k$,
		equation \eqref{defmuj} has, for $\ve$ sufficiently small, a unique solution
		satisfying
		\[
		\frac{L_j}{G_j}=O(\ve)+O\!\left(\frac{\log G_j}{G_j}\right).
		\]
		Moreover
		\begin{align}
			\log\mu_j^2
			&=
			O\!\left(\ve G_j+\log G_j\right),
			\label{robust-mu}\\
			\log d_j^2
			&=
			-G_j+
			O\!\left(\ve G_j+\log G_j\right),
			\label{robust-d}
		\end{align}
		and consequently
		\[
		d_k\ll d_{k-1}\ll\cdots\ll d_1,
		\qquad
		\frac{d_j}{d_{j-1}}\to0.
		\]
	\end{lemma}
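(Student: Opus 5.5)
The plan is to argue by induction on $j$ and, for each $j$, to treat \eqref{defmuj} as a scalar fixed-point problem in the variable $t=L/G_j$. The first task is to bound the $L$-independent part of $\mathcal B_j$. Write $\mathcal B_j(L)=-2(1+\Gamma_j)L+A_j$, where
\[
A_j=-\log 8-h_\lambda(0)+2\log(\lambda pG_j)-\ve G_j-p\gamma_j^{p-1}\eta_j+\Gamma_j\bigl[2G_j+2\log(\lambda pG_j)-h_\lambda(0)\bigr].
\]
We will show that $A_j=O(\ve G_j+\log G_j)$ and $\Gamma_j=O(\ve^{p-1})$.

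For $\Gamma_j$ we use \eqref{defj}: $\gamma_j/\gamma_{j+1}=m_{j+1}\ve$, so $\Gamma_j=O(\ve^{p-1})=O(\ve)$. This gives $\Gamma_jG_j=O(\ve G_j)$.

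The term $p\gamma_j^{p-1}\eta_j$ is handled by the induction hypothesis. Assume \eqref{robust-d} holds for $i<j$, so that $-4\log d_i-h_\lambda(0)=2G_i+O(\ve G_i+\log G_i)$. Then
\[
p\gamma_j^{p-1}|\eta_j|\lesssim\sum_{i<j}\Bigl(\frac{\gamma_j}{\gamma_i}\Bigr)^{p-1}G_i=\sum_{i<j}\gamma_j^{p-1}\gamma_i .
\]
By \eqref{explicitgamma}, the dominant term $i=j-1$ equals $\gamma_j^{p-1}\gamma_{j-1}\asymp\ve\gamma_j^p=\ve G_j$. The remaining terms with $i<j-1$ are smaller by further powers of $\ve$. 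Hence $A_j=O(\ve G_j+\log G_j)$, uniformly in $m\in[\delta,\delta^{-1}]^k$.

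Now set $L=tG_j$ and $\beta=A_j/(pG_j)=O(\ve+\log G_j/G_j)$. The equation \eqref{defmuj} becomes
\[
F(t):=t+\bigl(1-\tfrac{2(1+\Gamma_j)}{p}t+\beta\bigr)^p-1=0 .
\]
Expanding for small $t$ and $\beta$ gives
\[
F(t)=t\Bigl(1-2(1+\Gamma_j)\Bigr)+p\beta+O(t^2+\beta^2)=-(1+2\Gamma_j)t+p\beta+O(t^2+\beta^2).
\]
The linear coefficient is $-(1+2\Gamma_j)\approx-1$, which is nondegenerate. On the interval $|t|\le C_0(\ve+\log G_j/G_j)$ with $C_0$ large, the map $t\mapsto t+F(t)/(1+2\Gamma_j)$ is therefore a contraction. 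Alternatively, $F$ is strictly monotone there, since $F'(t)=-1+O(|t|+|\beta|)$, and $F$ changes sign at the endpoints. Either way we obtain a unique small root with $t=p\beta/(1+2\Gamma_j)+O(\beta^2)=O(\ve)+O(\log G_j/G_j)$. This is the claimed estimate on $L_j/G_j$ and gives \eqref{robust-mu}.

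The "small solution" in the statement is exactly the root in this neighbourhood. A second, large root of $F$ exists at $t$ of order one, because $F$ is convex in $t$; uniqueness is only claimed within the small neighbourhood.

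Then $\log d_j^2=L_j-G_j-\log(\lambda pG_j)$ gives \eqref{robust-d}, which closes the induction; for $j=1$ no hypothesis is needed since $\eta_1=0$.

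The final step is separation of scales. Since $\log(d_j^2/d_{j-1}^2)=-(G_j-G_{j-1})+O(\ve G_j+\log G_j)$ and $G_{j-1}=(m_j\ve)^pG_j\ll G_j$, we get $\log(d_j/d_{j-1})\le-\tfrac14G_j\to-\infty$.

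The main obstacle is the bookkeeping in bounding $p\gamma_j^{p-1}\eta_j$. It requires \eqref{robust-d} for all earlier indices, and it requires checking that multiplying $O(\ve G_i+\log G_i)$ by $(\gamma_j/\gamma_i)^{p-1}$ stays $O(\ve G_j+\log G_j)$. For $i=j-1$ we have $(\gamma_j/\gamma_{j-1})^{p-1}\log G_{j-1}\approx\ve^{-1-\ve}\log G_{j-1}$. Because $G_j\gtrsim\ve^{-p}G_{j-1}$, this is $\ll\ve G_j$, so the bound closes. The same comparison, with more room to spare, controls the terms with $i<j-1$.
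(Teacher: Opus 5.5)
Your proposal is correct and follows the paper's proof. You argue by induction on $j$, using \eqref{robust-d} for the earlier scales to get $p\gamma_j^{p-1}\eta_j=O(\ve G_j)$ and $\Gamma_j=O(\ve)$. You then solve the scalar equation for $\ell=L/G_j$ near the nondegenerate zero $\ell=0$, and finish with the same comparison $G_{j-1}=(\ve m_j)^pG_j=o(G_j)$ for the separation of scales. The one difference is at the root-finding step: the paper uses the implicit function theorem via uniform convergence to $\ell+(1-\ell)^2-1$, whereas your contraction/monotonicity argument gives the more explicit root $t=p\beta/(1+2\Gamma_j)+O(\beta^2)$.
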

	
	\begin{proof}
		The construction is sequential in $j$.  For $j=1$ one has $\eta_1=0$.
		Assume inductively that $L_i$, $\mu_i$, and $d_i$ have already been
		constructed for $i<j$ and satisfy \eqref{robust-mu}--\eqref{robust-d}.
		Then, we clearly have 
		\[
		-4\log d_i-h_\lambda(0)
		=
		2G_i+O(\ve G_i+\log G_i),
		\]
		and therefore
		\[
		|\eta_j|
		\le
		C\sum_{i<j}\gamma_i
		\le C\gamma_{j-1}.
		\]
		Since
		\[
		\frac{\gamma_{j-1}}{\gamma_j}=\ve m_j,
		\]
		we obtain, uniformly in the parameter box \eqref{emme},
		\begin{equation*}\label{eta-rough}
			\frac{\eta_j}{\gamma_j}=O(\ve).
		\end{equation*}
		Also $\Gamma_j=O(\ve)$ directly from \eqref{defGammaj}.
		
		Write $\ell=L/G_j$.  Dividing \eqref{Bjexact} by $G_j$ gives
		\[
		\frac{{\mathcal B}_j(L)}{G_j}
		=
		-2(1+\Gamma_j)\ell
		-\ve-\frac{p\eta_j}{\gamma_j}
		+2\Gamma_j
		+
		O\!\left(\frac{\log G_j}{G_j}\right).
		\]
		Thus
		\[
		{\mathfrak F}_{j,\ve}(\ell)
		:=
		\ell+
		\left[
		1+\frac1p\frac{{\mathcal B}_j(G_j\ell)}{G_j}
		\right]^p-1
		\]
		converges, uniformly for $\ell$ in a fixed small interval, 
		to $\ell+(1-\ell)^2-1.$ Clearly, the zero $\ell=0$ is nondegenerate.  
		
		The implicit function theorem applied to ${\mathfrak F}_{j,\ve}(\ell)=0$, using \eqref{defmuj} gives a unique small branch and
		\[
		\ell
		=
		O(\ve)+O\!\left(\frac{\log G_j}{G_j}\right).
		\]
		This proves \eqref{robust-mu} for the $j$-th scale.  Since
		\[
		\log\rho_j^2=-G_j-\log(\lambda pG_j),
		\]
		formula \eqref{robust-d} follows as well.  The induction is complete.
		
		Finally,
		\[
		\log\frac{d_j^2}{d_{j-1}^2}
		=
		-G_j+G_{j-1}
		+O(\ve G_j+\log G_j).
		\]
		Because
		\[
		G_{j-1}=(\ve m_j)^pG_j=o(G_j),
		\]
		we obtain
		\[
		\log\frac{d_j^2}{d_{j-1}^2}=-(1+o(1))G_j.
		\]
		Hence $d_j/d_{j-1}\to0$.  In fact, since every $G_j$ dominates
		$|\log\ve|$, for every fixed $N>0$,
		\begin{equation*}\label{superalgebraic-separation}
			d_1=O(\ve^N),
			\qquad
			\frac{d_j}{d_{j-1}}=O(\ve^N),
			\quad j=2,\ldots,k.
		\end{equation*}
		This stronger form will be used repeatedly below.
	\end{proof}
	
	We write
	\[
	a_j:=a_j(L_j),
	\qquad
	\sigma_j:=a_j^{p-1}.
	\]
	By \eqref{defmuj},
	\begin{equation}\label{matchingidentity}
		\mu_j^2\exp\{G_j(a_j^p-1)\}=1.
	\end{equation}
	Thus the full constant displacement in the
	exponent is matched exactly, and no expansion of a quantity of size
	$\ve G_j$ inside an exponential will be used.
	
	\subsection{Positive weighted norms}
	
	We use positive weights adapted to the logarithmic radial variable.
	Set
	\[
	s=-\log|x|,\qquad s_j=-\log d_j.
	\]
	The numbers $s_j$ satisfy
	\[
	s_1\ll s_2\ll\cdots\ll s_k,
	\]
	whereas
	\[
	b_1\gg b_2\gg\cdots\gg b_k,
	\qquad
	\frac{b_{j+1}}{b_j}=(\ve m_{j+1})^{p-1},
	\]
	where $b_j$ is defined in \eqref{bjdef}.
	
	We first introduce the positive bridge function $\beta_\ve$.  It is the
	continuous function of $s$ such that
	\[
	\beta_\ve(s_j)=b_j,
	\]
	it is affine on each interval $[s_j,s_{j+1}]$, it is equal to $b_1$ for
	$s\le s_1$, and equal to $b_k$ for $s\ge s_k$.  We write
	\[
	\beta_\ve(x):=\beta_\ve(-\log|x|)
	\]
	for $x\ne0$, and $\beta_\ve(0)=b_k$.
	
	The affine interpolation in $s$ has a simple reason. If $F=F(s)$ is
	radial and $s=-\log r$, then
	\begin{equation}\label{radial-log-laplacian}
		\Delta F=e^{2s}F''(s).
	\end{equation}
	Thus an affine function of $s$ is harmonic in the corresponding planar
	annulus. Since the admissible size of the correction is $b_j$ at the
	$j$-th core and $b_{j+1}$ at the next one, $\beta_\ve$ is the harmonic
	interpolation of these two sizes. In particular, the norm below does not
	force the correction to remain at the size of either endpoint throughout
	a logarithmically very long bridge.
	
	The norm for the correction will be
	\begin{equation}\label{starnorm2}
		\|\phi\|_{**}
		:=
		\sup_{x\in\Omega}\frac{|\phi(x)|}{\beta_\ve(x)}.
	\end{equation}
	Thus, near the $j$-th bubble, $\|\phi\|_{**}=O(\ve)$ means precisely
	\[
	|\phi|=O(\ve\gamma_j^{1-p}),
	\]
	which is the scale required in order not to perturb the exponential
	profile.
	
	To define the norm for the error, let
	\[
	R_\ve:=|\log\ve|^2
	\]
	and choose $\chi\in C^\infty([0,\infty))$ such that
	\[0\le\chi\le1,\quad \chi=0\:\:\mbox{on}\:\:[0,1]\quad\mbox{and}\quad \chi=1\:\:\mbox{ on}\:\: [2,\infty).
	\]
	For $t\in\mathbb R$, set
	\[
	\tau_\ve(t):=\min\{|t|,2R_\ve\}
	\]
	and
	\begin{equation}\label{Thetaweight}
		\Theta_{j,\ve}(t)
		:=
		1+\tau_\ve(t)
		+\frac{1+\tau_\ve(t)^2}{\ve G_j}
		+\frac1\ve\chi\!\left(\frac{|t|}{R_\ve}\right).
	\end{equation}
	There is one additional contribution which cannot be bounded pointwise by
	the tails of the $V_j$'s in the middle of a very long bridge. In the
	outer bridge one must also preserve the cancellation
	$e^{U^p}-1$ once $U$ becomes small. Fix a spatial radius $r_0>0$, independent
	of $\ve$, so small that $\overline{B(0,2r_0)}\subset\Omega$ and all the
	local expansions used below are valid in $B(0,2r_0)$.
	
	Put
	\[
	\chi_{\rm tr}(x)
	:=
	\prod_{j=1}^k
	\chi\!\left(
	\frac{\left|\log(|x|/d_j)\right|}{R_\ve}
	\right)
	\]
	in $B(0,r_0)$ and extend it by zero outside $B(0,r_0)$. Note that $\chi_{\rm tr}$ is equal to one between consecutive bubbles and zero in the bubbles. 
	
	We define
	\begin{align*}
		{\mathcal D}^{\rm hi}_\ve(x)
		&:=
		\chi_{\rm tr}(x)\,
		\chi(U(x))\,\lambda U(x)e^{U(x)^p},
		\\
		{\mathcal D}^{\rm lo}_\ve(x)
		&:=
		\chi_{\rm tr}(x)\,
		[1-\chi(U(x))]\,
		\lambda U(x)\bigl(e^{U(x)^p}-1\bigr).
	\end{align*}
	The high-amplitude density is the one relevant between consecutive
	bubbles.  The low-amplitude density occurs only in the exterior bridge,
	where the cancellation $e^{U^p}-1$ is essential.
	
	We use the positive weight
	\begin{equation}\label{rho-positive}
		\begin{aligned}
			\varrho_\ve(x)
			:={}&
			b_1
			+\sum_{j=1}^k
			b_jV_j(x)\,
			\Theta_{j,\ve}\!\left(\log\frac{|x|}{d_j}\right)\\
			&+\ve^{-2}{\mathcal D}^{\rm hi}_\ve(x)
			+\ve^{-1}{\mathcal D}^{\rm lo}_\ve(x).
		\end{aligned}
	\end{equation}
	At $x=0$, the logarithmic factors in $\varrho_\ve$ are understood by
	their limits as $|x|\to0$; this defines $\varrho_\ve(0)>0$.
	The associated norm is
	\begin{equation*}\label{starnorm}
		\|h\|_*:=
		\sup_{x\in\Omega}\frac{|h(x)|}{\varrho_\ve(x)}.
	\end{equation*}
	Both $\|\cdot\|_*$ and $\|\cdot\|_{**}$ depend on $(\ve,m)$ through
	the scales and weights; this dependence is suppressed from the notation.
	For fixed $\ve$, the norms corresponding to $m$ in the compact parameter
	box are uniformly equivalent.  All terms in \eqref{rho-positive} are
	non-negative.
	
	For the estimates in this section we write
	\begin{equation*}\label{positive-nonlinearity}
		f_+(s):=\lambda s e^{s^p},\qquad s>0.
	\end{equation*}
	Since the approximate solution $U$ is positive, this is the only branch of
	the nonlinearity used before the projected construction.
	
	\begin{lemma}[uniform core expansion]\label{lem:core-expansion}
		Let
		\[
		t=\log\frac{|x|}{d_j},\qquad x=d_jy.
		\]
		Uniformly for
		\[
		|t|\le2R_\ve,\qquad j=1,\ldots,k,
		\]
		one has
		\begin{equation}\label{coarse-local-U}
			\frac{U(d_jy)}{\gamma_j}
			=
			a_j+
			\frac{w(y)-4\Gamma_j\log|y|}{pG_j}
			+r_{j,\ve}^{\,c}(y),
			\qquad
			G_j|r_{j,\ve}^{\,c}(y)|=o(1).
		\end{equation}
		Consequently,
		\begin{align}
			\lambda Ue^{U^p}
			&=
			\frac{b_j}{p}V_j
			\left[
			1+
			O\!\left(
			\ve(1+|t|)
			+\frac{1+t^2}{G_j}
			\right)
			\right],
			\label{core-f}\\
			f_+'(U)-\lambda
			&=
			V_j
			\left[
			1+
			O\!\left(
			\ve(1+|t|)
			+\frac{1+t^2}{G_j}
			\right)
			\right].
			\label{core-fprime}
		\end{align}
		All remainders are uniform in the parameter box.
	\end{lemma}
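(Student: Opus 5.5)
We work at a fixed $j$, set $x=d_jy$ with $t=\log|y|$ and $|t|\le 2R_\ve$, and expand each projected bubble $U_i$ using the explicit formula \eqref{juan} together with the estimate $\|H_i^\lambda-H_\lambda(\cdot,0)\|_\infty\le C_\nu d_i^{2-\nu}$. Because $R_\ve=|\log\ve|^2$ while Lemma~\ref{lem:robust-scales} gives superalgebraic separation $d_j/d_{j-1}=O(\ve^N)$, the region $|t|\le 2R_\ve$ corresponds to $|x|\in[d_je^{-2R_\ve},d_je^{2R_\ve}]$. This range satisfies $|x|\ll d_i$ for every $i<j$ and $|x|\gg d_i$ for every $i>j$.

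\textbf{Expansion of each bubble.}
\begin{itemize}
\item For $i=j$ we get
\[
p\gamma_j^{p-1}U_j=w(y)-\log 8-4\log d_j-h_\lambda(0)+O(d_j^{2-\nu}+|x|).
\]
\item For $i<j$ we get
\[
p\gamma_i^{p-1}U_i=-4\log d_i-h_\lambda(0)+O(|x|^2/d_i^2+|x|+d_i^{2-\nu}).
\]
The errors are exponentially small, of order $e^{2R_\ve}d_j/d_{j-1}$.
\item For $i>j$ we have
\[
\log\frac1{(d_i^2+|x|^2)^2}=-4\log|x|+O(d_i^2/|x|^2),
\]
so
\[
p\gamma_i^{p-1}U_i=-4\log d_j-4\log|y|-h_\lambda(0)+O(|x|+\text{tiny}).
\]
\end{itemize}

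\textbf{Assembling the expansion of $U$.} We multiply by $\gamma_j/\gamma_i$ weights and sum. The $i<j$ terms give exactly $\eta_j$ by the definition \eqref{defetaj}, which is why $-p\gamma_j^{p-1}\eta_j$ appears in $\mathcal B_j$. The $i>j$ terms give
\[
\Gamma_j\bigl(-4\log d_j-h_\lambda(0)\bigr)-4\Gamma_j\log|y|,
\]
because $\gamma_j^{p-1}/\gamma_i^{p-1}$ summed over $i>j$ equals $\Gamma_j$. Rewriting $-4\log d_j=-2L_j+2G_j+2\log(\lambda pG_j)$ and using $\gamma_j=(pG_j)^{-1}p\gamma_j^{p}\cdot\gamma_j^{-(p-1)}\cdots$ to normalize, the constant collects into $1+\mathcal B_j(L_j)/(pG_j)=a_j$.

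We still have to check that the $-\ve G_j$ term in $\mathcal B_j$ is reproduced. Comparing $U_j$ in the form of \eqref{defWj}/Ujfirst with \eqref{juan}, it is built in; more precisely, Ujfirst is identical to \eqref{juan}, so the bookkeeping reduces to an algebraic identity that we verify directly.

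The remainder $r^c$ is bounded by
\[
G_j^{-1}\bigl(e^{2R_\ve}d_j+e^{2R_\ve}\max_{i<j}d_j/d_i+\cdots\bigr),
\]
times ratios $\gamma_j/\gamma_i\le \ve^{-k}$. This is $o(G_j^{-1})$ by superalgebraic smallness. This proves \eqref{coarse-local-U}.

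\textbf{The nonlinear consequences.} Write $U=\gamma_j(a_j+z)$ with
\[
z=\frac{w-4\Gamma_j t}{pG_j}+r.
\]
Then
\[
U^p=G_j a_j^p\bigl(1+z/a_j\bigr)^p=G_ja_j^p+pG_ja_j^{p-1}z+O\bigl(G_j z^2\bigr).
\]
Here $G_jz=O(1+|t|)$, because $w=O(1+|t|)$ for $|t|\le2R_\ve$, so $G_jz^2=O((1+t^2)/G_j)$. The identity $\sigma_j=a_j^{p-1}=1+O(\ve)$ gives
\[
pG_j\sigma_jz=w-4\Gamma_jt+O\bigl(\ve(1+|t|)\bigr)+o(1),
\]
and $\Gamma_j t=O(\ve|t|)$. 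Then
\[
e^{U^p}=e^{G_j}\exp\{G_j(a_j^p-1)\}e^{w}\bigl[1+O(\cdot)\bigr].
\]
By the matching identity \eqref{matchingidentity} this equals $e^{G_j}\mu_j^{-2}e^{w}[\dots]$. Multiplying by $\lambda U=\lambda\gamma_j(1+O(\ve))$ and using
\[
\rho_j^{-2}=\lambda pG_je^{G_j},\qquad d_j=\mu_j\rho_j,
\]
we obtain $\lambda Ue^{U^p}=\frac{b_j}{p}d_j^{-2}e^{w}[\dots]$, which is \eqref{core-f}. For \eqref{core-fprime} we use
\[
f_+'(U)=\lambda e^{U^p}\bigl(1+pU^p\bigr)=p\gamma_j^{p-1}\frac{f_+(U)}{U}\cdot\frac{U^p}{G_j}\bigl(1+O(G_j^{-1})\bigr),
\]
and $U^p/G_j=1+O(\ve)+O((1+|t|)/G_j)$. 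The subtraction of $\lambda$ is negligible relative to $V_j$ in this range.

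\textbf{Main obstacle.} The hardest point is controlling the $o(1)$ constants inside the exponent uniformly up to $|t|\le2R_\ve$. Each such constant must be $o(1)$ absolutely, not merely $o(G_j)$. This is where both the exact matching \eqref{defmuj} and the superalgebraic separation of Lemma~\ref{lem:robust-scales} are essential. In particular we must check that $\gamma_j/\gamma_i$ factors, which are polynomial in $\ve^{-1}$, never beat $e^{-cG_i}$-type smallness.
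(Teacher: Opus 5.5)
Your proposal is correct and is essentially the paper's argument. You expand each $U_i$ from \eqref{juan} at $x=d_jy$, let the outer constants produce the $\eta_j$-term and the inner logarithms produce $-4\Gamma_j\log|y|$, and recognize the constant as $a_j$ (the $-\ve G_j$ in $\mathcal B_j$ just accounts for $2G_j=pG_j-\ve G_j$). You then use \eqref{matchingidentity} so that only the non-constant part is Taylor-expanded.

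Two small fixes are needed. First, your displayed identity for $f_+'(U)$ is off by a factor $\gamma_j$; write instead $f_+'(U)=pU^{p-1}f_+(U)\bigl(1+O(G_j^{-1})\bigr)$ with $U^{p-1}=b_j^{-1}\bigl(1+O(\ve(1+|t|))\bigr)$. Second, absorbing the factors $e^{\pm2R_\ve}=\ve^{\mp2|\log\ve|}$ requires the exponential separation $\log(d_j/d_{j-1})=-(1+o(1))G_j/2$ from \eqref{robust-d}, together with $G_j\gg|\log\ve|^2$. The bound $d_j/d_{j-1}=O(\ve^N)$ for each fixed $N$ is not enough on its own.
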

	
	\begin{proof}
		Using \eqref{juan}, the contribution of an outer bubble $i<j$ at
		$x=d_jy$ is
		\[
		p\gamma_i^{p-1}U_i(d_jy)=-4\log d_i-h_\lambda(0)
		+
		O\!\left(
		\frac{d_j^2|y|^2}{d_i^2}
		+d_j|y|+d_i^{2-\nu}
		\right),
		\]
		whereas an inner bubble $i>j$ contributes
		\[
		p\gamma_i^{p-1}U_i(d_jy)=-4\log(d_j|y|)-h_\lambda(0)
		+
		O\!\left(
		\frac{d_i^2}{d_j^2|y|^2}
		+d_j|y|+d_i^{2-\nu}
		\right).
		\]
		The constants from $i<j$ cancel by \eqref{defetaj}; the logarithmic
		contributions from $i>j$ produce $-4\Gamma_j\log|y|$.  Since
		$|t|\le2R_\ve$ and \eqref{superalgebraic-separation} holds, all remaining
		terms are $o(G_j^{-1})$ after division by $\gamma_j$, which proves
		\eqref{coarse-local-U}.
		
		Now write
		\[
		U(d_jy)
		=
		\gamma_j
		\left[
		a_j+
		\frac{w(y)-4\Gamma_j\log|y|}{pG_j}
		+r_{j,\ve}^{\,c}(y)
		\right].
		\]
		The exact matching identity
		\[
		\mu_j^2e^{G_j(a_j^p-1)}=1
		\]
		removes the complete constant part of the exponential.  Since
		$a_j=1+O(\ve)$, $\Gamma_j=O(\ve)$ and
		$|w(y)|\le C(1+|t|)$ in this region, Taylor expansion only in the
		non-constant variable gives \eqref{core-f}.  Differentiating
		$f_+(s)=\lambda s e^{s^p}$, that is 
		\[
		f'_+(s)=\lambda (p s^p+1)e^{s^p},
		\]
		at the positive value $U$ gives
		\eqref{core-fprime}; the additional term $\lambda e^{U^p}$ is smaller by
		a factor $O(G_j^{-1})$.
	\end{proof}

	\begin{lemma}[shape of the nonlinear density on the bridges]
		\label{lem:localization}
		Let $\omega\in\mathbb S^1$ and write
		\[
		U_\omega(s):=U(e^{-s}\omega).
		\]
		For $1\le j\le k-1$ set
		\[
		{\mathcal J}_j=[s_j,s_{j+1}],
		\qquad
		{\mathcal J}_j^{\rm tr}=[s_j+2R_\ve,s_{j+1}-2R_\ve].
		\]
		For all sufficiently small $\ve$, uniformly in the parameter box and in
		$\omega$, the following properties hold.
		
		\begin{enumerate}
			\item The function
			\[
			{\mathcal H}_\omega(s)
			:=-2s+\log(\lambda U_\omega(s))+U_\omega(s)^p
			\]
			is strictly convex on the interval
			\[
			s_j+R_\ve\le s\le s_{j+1}-R_\ve.
			\]
			In particular it has at most one critical point on this interval,
			necessarily a minimum.
			
			\item There are constants $c,C>0$ such that
			\begin{equation}\label{Dtransition-small}
				\sup_{\omega\in\mathbb S^1}\sup_{s\in{\mathcal J}_j^{\rm tr}}
				e^{-2s}\lambda U_\omega(s)e^{U_\omega(s)^p}
				\le C e^{-cR_\ve}(b_j+b_{j+1}).
			\end{equation}
			
			\item For every $j=1,\ldots,k-1$ and every
			$s\in{\mathcal J}_j=[s_j,s_{j+1}]$,
			\begin{equation}\label{betaUbound}
				\beta_\ve(s)U_\omega(s)^{p-1}\le \frac{C}{\ve}.
			\end{equation}
			
			\item Let $s_0=-\log r_0$.  In the outer bridge
			$[s_0,s_1-2R_\ve]$, restricted to $U_\omega\ge1$,
			${\mathcal H}_\omega$ is strictly convex and
			\begin{equation}\label{outer-high-density}
				e^{-2s}\lambda U_\omega(s)e^{U_\omega(s)^p}
				\le C\left[b_1e^{-cR_\ve}+e^{-c/b_1}\right].
			\end{equation}
		\end{enumerate}
	\end{lemma}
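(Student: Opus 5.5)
The plan is to work with a single explicit model of $U_\omega$ on each bridge and reduce all four items to elementary properties of that model. By \eqref{juan} and the expansion $H_i^\lambda=H_\lambda(\cdot,0)+O(d_i^{2-\nu})$, for $s$ in $[s_j+R_\ve,s_{j+1}-R_\ve]$ the outer bubbles $i\le j$ satisfy $|x|\gg d_i$. Hence
\[
p\gamma_i^{p-1}U_i=4s-h_\lambda(0)+O(e^{-2(s-s_i)}+e^{-s}).
\]
The inner bubbles $i>j$ satisfy $|x|\ll d_i$, hence
\[
p\gamma_i^{p-1}U_i=4s_i-h_\lambda(0)+O(e^{-2(s_i-s)}+e^{-s}).
\]
Therefore
\[
U_\omega(s)=A_j+B_j s+\mathcal E(s),\qquad B_j=\frac4p\sum_{i\le j}\gamma_i^{1-p}=\frac4p b_j(1+o(1)),
\]
with $A_j=\sum_{i>j}\frac{4s_i-h_\lambda(0)}{p\gamma_i^{p-1}}-\frac{h_\lambda(0)}{p}\sum_{i\le j}b_i$. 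The error $\mathcal E$ and its first two $s$-derivatives are $O(e^{-2R_\ve}(b_j+b_{j+1}))+O(e^{-s})$, uniformly in $\omega$. Here the $d_i^{2-\nu}$ terms and the $r_0$-dependent $O(e^{-s})$ terms are negligible by the superalgebraic separation of Lemma~\ref{lem:robust-scales}. At the endpoints this profile matches Lemma~\ref{lem:core-expansion}: $U_\omega(s_j)\approx\gamma_j$ and $U_\omega(s_{j+1})\approx\gamma_{j+1}$.

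For item (1), compute
\[
\mathcal H''=\frac{U''}{U}-\frac{U'^2}{U^2}+pU^{p-1}U''+p(p-1)U^{p-2}U'^2.
\]
The last term is of size $b_j^2U^{p-2}$. It dominates $U'^2/U^2\sim b_j^2/U^2$ because $U\ge\gamma_{j}(1+o(1))\to\infty$ on the bridge. The terms containing $U''$ are $O(e^{-2R_\ve})$ relative to it, since $U''$ comes only from $\mathcal E''$. This gives strict convexity, and strict convexity gives uniqueness of a critical point, which is a minimum.

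For item (2), convexity implies that on $\mathcal J_j^{\rm tr}$ the maximum of $\mathcal H$ is attained at an endpoint. At $s=s_j+2R_\ve$, the expansion \eqref{core-f} with $|t|=2R_\ve$ gives density $\le Cb_jV_j\lesssim b_jd_j^{-2}e^{-4R_\ve}\cdot d_j^2e^{2\cdot 2R_\ve}$ in physical variables, i.e.\ $e^{-2s}\cdot(\ldots)\le Cb_je^{-cR_\ve}$. The other endpoint is handled symmetrically with $b_{j+1}$, which requires extending Lemma~\ref{lem:core-expansion} to $t=-2R_\ve$ at scale $j+1$. The $(1+t^2)/G_j$ corrections are harmless there.

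For item (3), $\beta_\ve$ is affine between $b_j$ and $b_{j+1}$, and $U^{p-1}$ is a power of the near-affine function $A_j+B_js$. At $s_j$ the product is $\approx b_j\gamma_j^{p-1}=1$, and at $s_{j+1}$ it is $\approx b_{j+1}\gamma_{j+1}^{p-1}=1$. In the interior,
\[
\beta_\ve U^{p-1}\le\max(b_j,b_{j+1})\,\gamma_{j+1}^{p-1}(1+o(1))\le b_j\gamma_{j+1}^{p-1}=(\gamma_{j+1}/\gamma_j)^{p-1}=(\ve m_{j+1})^{1-p}.
\]
This bound is $\le C\ve^{-1-\ve}\le C/\ve$ since $\ve^{-\ve}\to1$. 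A sharper bound is not needed.

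For item (4), in $[s_0,s_1-2R_\ve]$ we have $U=\frac4p b_1 s\,(1+o(1))+O(b_1)$, so $U\ge1$ forces $s\gtrsim 1/b_1$. Convexity is proved exactly as in item (1), using $U\ge1$ in place of $U\to\infty$. At the inner endpoint the same estimate as in item (2) gives $b_1e^{-cR_\ve}$. At the outer endpoint, where $U\approx1$, the density is $\le Ce^{-2s}\le Ce^{-c/b_1}$.

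The main obstacle is the uniformity of the remainder $\mathcal E''$ relative to $pU^{p-1}\cdot$ and $U'^2$ near the ends of the convexity interval, where $|t|\sim R_\ve$. There the neglected exponentials $e^{-2R_\ve}$ must beat $(p-1)U^{-1}U'^2\sim b_j^2/\gamma_j$. My first attempt would be to compare directly using $R_\ve=|\log\ve|^2$, so that $e^{-2R_\ve}=O(\ve^N)$ for every $N$.
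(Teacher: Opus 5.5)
Your skeleton is the paper's: strict convexity of $\mathcal H_\omega$ from the formula for $\mathcal H_\omega''$, with $p(p-1)U^{p-2}(\partial_sU)^2$ dominant and the $\partial_{ss}U$ terms killed by superalgebraic decay; endpoint values from Lemma~\ref{lem:core-expansion}; $\beta_\ve\le b_j$ with $U\le C\gamma_{j+1}$ for (3); and $U\ge1\Rightarrow s\ge c/b_1$ for (4). However, the explicit bridge profile you build everything on is wrong. For $s_j+R_\ve\le s\le s_{j+1}-R_\ve$ one has $d_{j+1}e^{R_\ve}\le|x|\le d_je^{-R_\ve}$. Hence $|x|\ll d_i$ for $i\le j$ and $|x|\gg d_i$ for $i\ge j+1$. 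You reversed the two cases: your remainders $e^{-2(s-s_i)}$ and $e^{-2(s_i-s)}$ belong to the correct geometry, but your leading terms do not. The outer bubbles give constants and the inner ones give the logarithm:
\[
U_\omega(s)=\sum_{i\le j}\frac{b_i}{p}\bigl(4s_i-h_\lambda(0)\bigr)+\sum_{i>j}\frac{b_i}{p}\bigl(4s-h_\lambda(0)\bigr)+\mathcal E(s),
\qquad
\partial_sU_\omega=\frac4p\,b_{j+1}\bigl(1+O(\ve)\bigr).
\]
Separately, your own $B_j=\frac4p\sum_{i\le j}b_i$ equals $\frac4pb_1(1+o(1))$, not $\frac4pb_j(1+o(1))$, since $b_1\gg\cdots\gg b_j$.

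This is not cosmetic, because your profile contradicts what you use later. Your $A_j$ contains $\frac{b_{j+1}}p(4s_{j+1}-h_\lambda(0))\approx\gamma_{j+1}$. So your $U_\omega$ is already $\gtrsim\gamma_{j+1}\gg\gamma_j$ at the left end of the bridge, and no endpoint matching holds. A slope $\gtrsim b_j$ over a length $\approx G_{j+1}/2$ then produces values of order $b_jG_{j+1}\approx\gamma_{j+1}/\ve$. Since $\beta_\ve\approx b_j/2$ in the middle of the bridge, this makes $\beta_\ve U_\omega^{p-1}$ of order $\ve^{-2}$, so item (3) would be false. Your proofs of (1) and (3) in fact rely silently on the true bounds $c\gamma_j\le U_\omega\le C\gamma_{j+1}$. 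These do follow from the corrected profile, along which $U_\omega$ increases from $\gamma_j(1+O(\ve))$ to $\gamma_{j+1}(1+O(\ve))$. With the correction, $\partial_{ss}U_\omega=O\bigl((b_j+b_{j+1})e^{-2R_\ve}\bigr)$ up to superalgebraically small terms. The comparison needed in (1) is then $|\partial_{ss}U_\omega|\ll b_{j+1}^2/U_\omega$, not a relative $O(e^{-2R_\ve})$: the ratio carries the algebraic factor $\gamma_{j+1}b_j/b_{j+1}^2$, which $e^{-2R_\ve}$ absorbs. After that, your argument is exactly the paper's.

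Two smaller slips remain. In (2), the point $s=s_j+2R_\ve$ lies inside the $j$-th core, $|x|=d_je^{-2R_\ve}$, so $V_j\approx8d_j^{-2}$ and $e^{-2s}=d_j^2e^{-4R_\ve}$. Your displayed product equals $b_j$ and shows no decay. At $s=s_{j+1}-2R_\ve$ one has $t=+2R_\ve$ at scale $j+1$, which Lemma~\ref{lem:core-expansion} already covers, so no extension is needed. In (4), the smooth $O(b_1e^{-s})$ part of $\partial_{ss}U_\omega$ is not small by scale separation. It is small because $U_\omega\ge1$ forces $s\ge c/b_1$, and you should say so explicitly.
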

	
	\begin{proof}
		Differentiating \eqref{juan}--\eqref{approxisol} along a ray gives, uniformly in $\omega$,
		\begin{equation}\label{Usbridge}
			\partial_sU_\omega(s)
			=
			\frac4p\sum_{i=1}^k b_i\frac{e^{-2s}}{d_i^2+e^{-2s}}
			+O\!\left(e^{-s}\sum_i b_i\right).
		\end{equation}
		The last term comes from the smooth functions $H_i^\lambda$; the reflection
		symmetries actually improve it to quadratic order at the origin, but the
		displayed estimate is sufficient.
		
		If $s_j+R_\ve\le s\le s_{j+1}-R_\ve$, the terms $i\le j$ are
		exponentially small, whereas
		\[
		\sum_{i\ge j+1}b_i=b_{j+1}(1+O(\ve)).
		\]
		Using \eqref{Usbridge} and the scale separation, we first obtain the
		first-derivative estimate
		\begin{equation*}\label{Us-central}
			\partial_sU_\omega
			=
			\frac4p b_{j+1}(1+O(\ve))+o(b_{j+1}).
		\end{equation*}
		Differentiating \eqref{Usbridge} with respect to $s$ gives, in addition,
		the following second-derivative estimate: for every fixed $N>0$,
		\begin{equation}\label{Uss-super-small}
			\sup_{\omega}
			\sup_{s_j+R_\ve\le s\le s_{j+1}-R_\ve}
			|\partial_{ss}U_\omega(s)|
			=
			o(b_{j+1}^{\,N}).
		\end{equation}
		Indeed, after this differentiation every bubble term
		contains a factor $e^{-2R_\ve}$, while the smooth part contains the
		physical factor $e^{-s}$.  Since
		$R_\ve=|\log\ve|^2$ and the scale separation
		\eqref{superalgebraic-separation} is faster than every algebraic power of
		$\ve$, both contributions are $o(b_{j+1}^{\,N})$ for every fixed $N$.
		Moreover $U_\omega\ge c\gamma_j$ in the central bridge.  A direct
		calculation yields
		\begin{align*}
			{\mathcal H}_\omega''
			={}&\frac{\partial_{ss}U_\omega}{U_\omega}
			-\frac{\partial_sU_\omega^2}{U_\omega^2}
			+p(p-1)U_\omega^{p-2}\partial_sU_\omega^2
			+pU_\omega^{p-1}\partial_{ss}U_\omega.
		\end{align*}
		The negative term is $o(b_{j+1}^2)$ because
		$U_\omega\ge c\gamma_j\to\infty$.  Moreover
		$U_\omega^{p-1}\le Cb_{j+1}^{-1}$, so
		\eqref{Uss-super-small} with, for instance, $N=3$ shows that both terms
		containing $\partial_{ss}U_\omega$ are $o(b_{j+1}^2)$.  On the other hand,
		\[
		p(p-1)U_\omega^{p-2}\partial_sU_\omega^2\ge cb_{j+1}^2.
		\]
		Thus ${\mathcal H}_\omega''>0$, uniformly in $\omega$.
		
		Lemma \ref{lem:core-expansion} near $s_j$ gives
		\[
		e^{-2s}\lambda U_\omega e^{U_\omega^p}
		\le Cb_j e^{-c|s-s_j|}
		\]
		for $|s-s_j|\le2R_\ve$, uniformly in $\omega$; the analogous estimate
		holds near $s_{j+1}$.  Strict convexity shows that the density has no
		interior maximum in the central bridge and proves \eqref{Dtransition-small}.
		
		On ${\mathcal J}_j$,
		\[
		U_\omega\le C\gamma_{j+1},\qquad \beta_\ve\le b_j.
		\]
		Since $\gamma_j/\gamma_{j+1}=\ve m_{j+1}$, this gives
		\[
		\beta_\ve U_\omega^{p-1}
		\le C\left(\frac{\gamma_{j+1}}{\gamma_j}\right)^{p-1}
		\le \frac C\ve.
		\]
		
		Finally consider the outer bridge.  Away from the first core,
		\eqref{Usbridge} gives on the set $U_\omega\ge1$
		\[
		\partial_sU_\omega=\frac4p b_1(1+O(\ve))+o(b_1).
		\]
		Exactly as above, for every fixed $N>0$,
		\begin{equation}\label{outer-Uss-super-small}
			\sup_{\omega}
			\sup_{\substack{s_0\le s\le s_1-R_\ve\\U_\omega(s)\ge1}}
			|\partial_{ss}U_\omega(s)|
			=
			o(b_1^{\,N}).
		\end{equation}
		Since
		\[
		p(p-1)U_\omega^{p-2}-U_\omega^{-2}\ge c>0
		\quad\hbox{for }U_\omega\ge1,
		\]
		and $U_\omega^{p-1}\le Cb_1^{-1}$ on the outer bridge,
		\eqref{outer-Uss-super-small} with $N=3$ shows that the terms containing
		$\partial_{ss}U_\omega$ are negligible.  The same calculation therefore gives
		${\mathcal H}_\omega''>0$.  At the core-side
		endpoint the matching expansion gives $Cb_1e^{-cR_\ve}$.  At the other
		endpoint of the high-amplitude region, where $U_\omega=1$, the estimate
		\[
		U_\omega(s)\le Cb_1(1+s)
		\]
		implies $s\ge c/b_1$, so the density is at most $Ce^{-c/b_1}$.  Convexity
		proves \eqref{outer-high-density}.
	\end{proof}
	
	For $0<r<r_0$ define the radial majorant
	\begin{equation*}\label{radial-rho-majorant}
		\overline\varrho_\ve(r):=\sup_{\omega\in\mathbb S^1}\varrho_\ve(r\omega).
	\end{equation*}
	All estimates above are uniform in $\omega$, so replacing $\varrho_\ve$ by
	$\overline\varrho_\ve$ does not change their size.
	
	We next construct the supersolution which propagates estimates from one
	core to the next. Formula~\eqref{radial-log-laplacian} reduces the radial
	part of the problem on a bridge to a one-dimensional equation in $s$.
	The affine function $\beta_\ve$ supplies the correct boundary sizes, but
	$L\beta_\ve={\mathcal V}_\ve\beta_\ve$ has the wrong sign. We therefore add a
	non-negative correction $P_j$ solving a one-dimensional Dirichlet problem.
	The source of that problem is concentrated near the two ends of the bridge.
	Consequently it is the first moment of the source, rather than only its
	total mass, which controls $P_j$. Lemma~\ref{lem:bridge-moments} records
	these moment estimates; Lemma~\ref{lem:affine-bridge} then gives
	$P_j=o(\beta_\ve)$ and a positive supersolution of the required size.
	
	\begin{lemma}[bridge moments]\label{lem:bridge-moments}
		Fix $R>1$.  For $j=2,\ldots,k$ set
		\[
		A_j=s_{j-1}+\log R,
		\qquad
		B_j=s_j-\log R,
		\qquad
		I_j^R=[A_j,B_j].
		\]
		Let
		\[
		{\mathcal V}_\ve=\lambda+\sum_{i=1}^kV_i
		\]
		and
		\[
		g_j(s)
		=
		e^{-2s}\left[
		2{\mathcal V}_\ve(e^{-s})\beta_\ve(s)+\overline\varrho_\ve(e^{-s})
		\right].
		\]
		Then $g_j$ admits a decomposition
		\[
		g_j=g_j^{\,L}+g_j^{\,R}+g_j^{\,M},
		\qquad g_j^{\,L},g_j^{\,R},g_j^{\,M}\ge0,
		\]
		where $g_j^{\,L}$ is supported in the left half of $I_j^R$,
		$g_j^{\,R}$ in the right half, and
		\begin{align}
			\int_{A_j}^{B_j}(t-A_j)g_j^{\,L}(t)\,dt
			&\le \eta_R\,\beta_\ve(A_j),
			\label{leftmoment}\\
			\int_{A_j}^{B_j}(B_j-t)g_j^{\,R}(t)\,dt
			&\le \eta_R\,\beta_\ve(B_j),
			\label{rightmoment}\\
			(B_j-A_j)^2\|g_j^{\,M}\|_\infty
			&\le o_\ve(1)\min\{\beta_\ve(A_j),\beta_\ve(B_j)\},
			\label{middlemoment}
		\end{align}
		with
		\[
		\lim_{R\to\infty}\limsup_{\ve\to0}\eta_R=0.
		\]
		
		On the outer bridge
		\[
		I_1^R=[s_0,s_1-\log R],
		\qquad s_0=-\log r_0,
		\]
		the same three estimates hold with $\beta_\ve\equiv b_1$ and with a
		constant $\eta_R^{\rm out}(r_0)$ satisfying
		\begin{equation}\label{eta-outer}
			\limsup_{\ve\to0}\eta_R^{\rm out}(r_0)
			\le Cr_0^2+\eta_R.
		\end{equation}
		Thus $\eta_R^{\rm out}(r_0)$ can be made arbitrarily small by choosing
		first $r_0$ small and then $R$ large.
	\end{lemma}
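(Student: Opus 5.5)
We split $g_j$ according to its constituent terms and estimate each in the variable $s$, with $t=s-s_{j-1}$ near the left end and $t=s_j-s$ near the right end. On $I_j^R$ we have $e^{-s}\in[d_j R, d_{j-1}/R]$. In this range $V_i(e^{-s})$ is of order $d_i^{-2}e^{-4(s_{i}-s)}$ when $i\ge j$, and of order $d_i^{-2}$ when $i\le j-1$. Hence
\[
e^{-2s}V_{j-1}(e^{-s})\le 8e^{-2(s-s_{j-1})},\qquad e^{-2s}V_j(e^{-s})\le 8e^{-2(s_j-s)} .
\]
All other bubbles contribute $O(e^{-2(s-s_{i})})$ with $s-s_i\ge s-s_{j-1}$, or $O(e^{-2(s_i-s)})$ with $s_i-s\ge s_j-s$. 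The term $\lambda e^{-2s}$ is at most $e^{-2s_{j-1}}$, which is superalgebraically small by Lemma~\ref{lem:robust-scales}.

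We define $g_j^L$ as the restriction to the left half of $I_j^R$ of all terms decaying away from $s_{j-1}$, and $g_j^R$ symmetrically. The residual piece $g_j^M$ collects $\lambda e^{-2s}(\ldots)$ together with the contribution of the densities ${\mathcal D}^{\rm hi}_\ve$ on ${\mathcal J}_{j-1}^{\rm tr}$. We also move the portions of $g^L$ and $g^R$ beyond distance $2R_\ve$ from the ends into $g_j^M$.

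For the left moment we use $\beta_\ve(s)\le\beta_\ve(A_j)$ on the left half, which holds because $\beta_\ve$ is decreasing. The $V$-part then gives
\[
\int_{\log R}^\infty t\,e^{-2t}\,dt\;\beta_\ve(A_j)\le C R^{-2}\log R\;\beta_\ve(A_j).
\]
For the $\overline\varrho_\ve$-part we use the definition \eqref{rho-positive}. The terms $b_iV_i\Theta_{i,\ve}$ give integrals of the form
\[
b_{j-1}\int_{\log R}^{2R_\ve} t\,e^{-2t}\,\Theta_{j-1,\ve}(t)\,dt .
\]
Here $\Theta\le C(1+t+t^2/(\ve G))$ as long as $t\le R_\ve$, and the $\ve^{-1}\chi$ term only enters where $t\ge R_\ve$, where $e^{-2t}\ve^{-1}\le e^{-R_\ve}$ is negligible. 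Since $\ve G_{j-1}\gtrsim|\log\ve|$, we get $\eta_R\lesssim R^{-2}\log^3 R$. The key point is $\beta_\ve(A_j)\approx b_{j-1}$, up to a factor $1+O(\log R/(s_j-s_{j-1}))$.

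The right moment is where a naive bound fails: $\beta_\ve(B_j)\approx b_j\ll b_{j-1}$. We must therefore check that the terms attached to the $j$-th core carry $b_j$, and they do, namely $b_jV_j\Theta_{j,\ve}$ and $2V_j\beta_\ve$ with $\beta_\ve\approx b_j$ near $s_j$. The outer-core tail reaching the right half is
\[
b_{j-1}e^{-2(s-s_{j-1})}\le b_{j-1}e^{-(s_j-s_{j-1})}.
\]
This is superalgebraically smaller than $b_j$, because $s_j-s_{j-1}\sim G_j/2$ while $b_{j-1}/b_j=(\ve m_j)^{1-p}$ is only algebraic.

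For the middle piece we have $(B_j-A_j)^2\lesssim G_j^2$, which is polynomial in $1/\ve$. Each component of $g_j^M$ is bounded by $e^{-cR_\ve}(b_{j-1}+b_j)\cdot\ve^{-2}$: for the ${\mathcal D}^{\rm hi}$ term this follows from \eqref{Dtransition-small}, and for the truncated tails from $e^{-2R_\ve}$. Since $e^{-cR_\ve}=\ve^{c|\log\ve|}$ beats every power of $\ve$ and the ratio $b_{j-1}/b_j$ is algebraic, this gives $o(1)\min\beta_\ve$. The low-amplitude density does not appear on interior bridges, because $\chi(U)=1$ there since $U\ge c\gamma_{j-1}$.

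On the outer bridge we take $\beta\equiv b_1$. The $\lambda e^{-2s}(2\lambda b_1+\overline\varrho_\ve)$ part integrated against $(t-s_0)$ yields $O(r_0^2)b_1$; this is where the $Cr_0^2$ in \eqref{eta-outer} comes from. For the ${\mathcal D}^{\rm lo}$ part we use $U(e^{U^p}-1)\le CU^{p+1}$ with $U\le Cb_1(1+s)$, which gives $\ve^{-1}b_1^{p+1}$ times a polynomial in $s$ integrated against $e^{-2s}$, and this is $o(b_1)$. For ${\mathcal D}^{\rm hi}$ we use \eqref{outer-high-density}, which gives $\ve^{-2}[b_1e^{-cR_\ve}+e^{-c/b_1}]=o(b_1)$.

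The main obstacle is bookkeeping in the right moment and in the middle term. There we must verify systematically that no contribution of size $b_{j-1}$, or carrying the $\ve^{-2}$ factor from the $\varrho_\ve$ weights, survives near the inner core. Each such contribution has to be dominated by a superalgebraic factor coming from the scale separation of Lemma~\ref{lem:robust-scales}.
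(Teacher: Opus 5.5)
Your argument is correct and follows the paper's proof. The ingredients are the same: $\operatorname{sech}^2$ decay of $e^{-2s}V_{j-1}(e^{-s})$ and $e^{-2s}V_j(e^{-s})$ at the two adjacent cores, monotonicity of the affine weight $\beta_\ve$, superalgebraic scale separation beating the merely algebraic factors $b_{j-1}/b_j$ and $(B_j-A_j)^2$, and Lemma~\ref{lem:localization} for $\ve^{-2}{\mathcal D}^{\rm hi}_\ve$; on the outer bridge, the $Cr_0^2b_1$ bound for the constant terms and $\ve^{-1}b_1^{p}=O(|\log\ve|^{-1})$ for $\ve^{-1}{\mathcal D}^{\rm lo}_\ve$. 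The only difference is bookkeeping: the paper splits the whole radial part of $g_j$ at the midpoint and needs $\beta_\ve(s)\le C\beta_\ve(B_j)(1+B_j-s)$ on the right half, whereas your truncation at distance $2R_\ve$ lets you use $\beta_\ve\approx b_j$ on the support of $g_j^{\,R}$ and send far tails and constant-coefficient terms to $g_j^{\,M}$ (on the interior bridges, assign $b_1e^{-2s}$ and the $\ve^{-2}{\mathcal D}^{\rm hi}_\ve$ contribution on the layers $R_\ve\le|s-s_i|\le2R_\ve$ explicitly; both go trivially into $g_j^{\,M}$).
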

	
	\begin{proof}
		We give the decomposition explicitly.  Let
		\[
		s_j^*:=\frac{A_j+B_j}{2},
		\qquad
		\ell_j:=B_j-A_j.
		\]
		On an inter-bubble bridge $I_j^R$, $j\ge2$, the low-amplitude transition
		density vanishes for small $\ve$.  Since all the remaining terms in
		$\varrho_\ve$ are radial except for ${\mathcal D}^{\rm hi}_\ve$, write
		\begin{align}
			g_j^{\rm rad}(s)
			:={}&e^{-2s}\left[
			2{\mathcal V}_\ve(e^{-s})\beta_\ve(s)+b_1
			+\sum_{i=1}^k b_iV_i(e^{-s})
			\Theta_{i,\ve}(s_i-s)
			\right],
			\label{qj-bridge}\\
			g_j^{\rm tr}(s)
			:={}&\ve^{-2}e^{-2s}
			\sup_{\omega\in\mathbb S^1}
			{\mathcal D}^{\rm hi}_\ve(e^{-s}\omega).
			\label{hj-bridge}
		\end{align}
		Then $g_j=g_j^{\rm rad}+g_j^{\rm tr}$.  We set
		\begin{equation*}\label{g-decomposition}
			g_j^{\,L}:={\bf 1}_{[A_j,s_j^*]}g_j^{\rm rad},
			\qquad
			g_j^{\,R}:={\bf 1}_{(s_j^*,B_j]}g_j^{\rm rad},
			\qquad
			g_j^{\,M}:=g_j^{\rm tr}.
		\end{equation*}
		This is the decomposition appearing in the statement.
		
		We first record two elementary facts about the affine bridge weight.
		From \eqref{robust-d}, \eqref{explicitgamma}, and
		$b_j=\gamma_j^{1-p}$,
		\begin{equation}\label{beta-endpoints}
			\beta_\ve(A_j)=b_{j-1}(1+o(1)),
			\qquad
			\beta_\ve(B_j)=b_j(1+o(1)),
		\end{equation}
		and
		\begin{equation*}\label{beta-slope}
			\frac{|\beta_\ve'|}{b_j}
			=
			O\!\left(
			\frac{b_{j-1}/b_j}{s_j-s_{j-1}}
			\right)
			=o(1).
		\end{equation*}
		Indeed $b_{j-1}/b_j$ is only an algebraic power of $\ve^{-1}$, whereas
		$s_j-s_{j-1}=\frac12G_j(1+o(1))$ grows faster than that power.  It follows
		that
		\begin{equation}\label{beta-left-right}
			\beta_\ve(s)\le \beta_\ve(A_j)
			\quad(A_j\le s\le s_j^*),
		\end{equation}
		and, on the right half,
		\begin{equation}\label{beta-right-growth}
			\beta_\ve(s)
			\le
			C\beta_\ve(B_j)\,[1+B_j-s].
		\end{equation}
		
		We now estimate the left moment.  The bubble adjacent to the left endpoint
		is the $(j-1)$-st one.  Put $\tau=s-s_{j-1}$.  Since
		$A_j=s_{j-1}+\log R$, one has $\tau\ge\log R$, and
		\begin{equation}\label{sechidentity}
			e^{-2s}V_{j-1}(e^{-s})
			=2\,\operatorname{sech}^2\tau
			\le Ce^{-2\tau}.
		\end{equation}
		By \eqref{beta-left-right}, \eqref{beta-endpoints}, and
		$1/(\ve G_i)\le C$ for every $i$, the contribution of this bubble to the
		left moment is bounded by
		\begin{align}
			C\beta_\ve(A_j)
			\bigg[&\int_{\log R}^{\infty}
			\tau\,(1+\tau+\tau^2)e^{-2\tau}\,d\tau
			\nonumber\\
			&+\ve^{-1}\int_{R_\ve}^{\infty}
			\tau e^{-2\tau}\,d\tau\bigg].
			\label{left-neighbour-estimate}
		\end{align}
		The first integral tends to zero as $R\to\infty$, uniformly in $\ve$;
		the second is $o(\ve^N)$ for every fixed $N$ because
		$R_\ve=|\log\ve|^2$.
		
		All bubbles with $i<j-1$ are farther to the left, while those with
		$i\ge j$ are separated from the left half by at least a fixed fraction of
		the logarithmic distance $s_j-s_{j-1}$.  Using
		\eqref{sechidentity} with the corresponding centers and the
		superalgebraic separation \eqref{superalgebraic-separation}, the sum of
		all these contributions is
		\begin{equation}\label{left-other-bubbles}
			o_\ve(1)\,\beta_\ve(A_j).
		\end{equation}
		Finally, the terms $b_1e^{-2s}$ and
		$2\lambda e^{-2s}\beta_\ve(s)$ satisfy
		\begin{align}
			\int_{A_j}^{s_j^*}(s-A_j)e^{-2s}
			[b_1+2\lambda\beta_\ve(s)]\,ds
			&\le
			C d_{j-1}^2
			[b_1+\beta_\ve(A_j)]
			\nonumber\\
			&=o_\ve(1)\beta_\ve(A_j).
			\label{left-constant-terms}
		\end{align}
		Combining \eqref{left-neighbour-estimate}--\eqref{left-constant-terms}
		proves \eqref{leftmoment}.
		
		The right moment is analogous, but it is useful to spell out the only
		minor difference.  The adjacent bubble is now the $j$-th one.  Put
		$\tau=s_j-s$; then $\tau\ge\log R$ on the right half and
		$B_j-s\le\tau$.  By \eqref{beta-right-growth},
		\[
		\beta_\ve(s)
		\le C\beta_\ve(B_j)(1+\tau),
		\qquad
		b_j\le C\beta_\ve(B_j).
		\]
		Therefore the contribution of $V_j$ is bounded by
		\begin{align}
			C\beta_\ve(B_j)
			\bigg[&\int_{\log R}^{\infty}
			\tau(1+\tau)^2(1+\tau+\tau^2)e^{-2\tau}\,d\tau
			\nonumber\\
			&+\ve^{-1}\int_{R_\ve}^{\infty}
			\tau(1+\tau)e^{-2\tau}\,d\tau\bigg]
			\le \eta_R\beta_\ve(B_j).
			\label{right-neighbour-estimate}
		\end{align}
		The other bubbles and the constant terms are superalgebraically smaller
		on the right half, exactly as above.  This gives \eqref{rightmoment}.
		
		It remains to estimate $g_j^{\,M}=g_j^{\rm tr}$.  Lemma~\ref{lem:localization},
		with index $j-1$, gives on the support of the transition density
		\[
		e^{-2s}
		\sup_{\omega}{\mathcal D}^{\rm hi}_\ve(e^{-s}\omega)
		\le
		Ce^{-cR_\ve}(b_{j-1}+b_j).
		\]
		Hence
		\begin{equation}\label{middle-direct}
			\ell_j^2\|g_j^{\,M}\|_\infty
			\le
			C\ell_j^2\ve^{-2}e^{-cR_\ve}(b_{j-1}+b_j).
		\end{equation}
		Both $\ell_j$ and $b_{j-1}/b_j$ grow at most algebraically in
		$\ve^{-1}$, while $e^{-cR_\ve}=e^{-c|\log\ve|^2}$ decays faster than
		every algebraic power.  Since the smaller endpoint value is comparable to
		$b_j$, \eqref{middle-direct} is precisely \eqref{middlemoment}.
		This completes the proof for $j=2,\ldots,k$.
		
		We finally treat the outer bridge.  Put
		\[
		A_1=s_0,
		\qquad
		B_1=s_1-\log R,
		\qquad
		M_1=\frac{A_1+B_1}{2}.
		\]
		Here $\beta_\ve\equiv b_1$.  In the definition of $q_1$ we include, in
		addition to the terms in \eqref{qj-bridge}, the low-amplitude contribution
		\[
		\ve^{-1}e^{-2s}
		\sup_{\omega}{\mathcal D}^{\rm lo}_\ve(e^{-s}\omega),
		\]
		and we again take the high-amplitude term as $g_1^{\,M}$.  The $V_1$
		contribution near $B_1$ gives the same $\eta_Rb_1$ estimate as
		\eqref{right-neighbour-estimate}; all bubbles $i\ge2$ are
		superalgebraically smaller.  At the left endpoint,
		\begin{equation}\label{outer-constant-moment}
			\int_{A_1}^{M_1}(s-A_1)e^{-2s}b_1\,ds
			\le Cr_0^2b_1.
		\end{equation}
		Moreover, by the estimate already proved for the low-amplitude density,
		\begin{equation}\label{outer-low-moment}
			\ve^{-1}
			\int_{A_1}^{B_1}(1+s)e^{-2s}
			\sup_{\omega}{\mathcal D}^{\rm lo}_\ve(e^{-s}\omega)\,ds
			=o(b_1).
		\end{equation}
		The high-amplitude term satisfies the middle estimate by
		\eqref{outer-high-density} and the same superalgebraic argument used in
		\eqref{middle-direct}.  Equations
		\eqref{outer-constant-moment}--\eqref{outer-low-moment} therefore give
		\eqref{eta-outer}.  The proof is complete.
	\end{proof}
	
	\begin{lemma}[affine bridge supersolution]\label{lem:affine-bridge}
		Choose first $r_0>0$ small and then $R>1$ large so that, with the notation
		of Lemma~\ref{lem:bridge-moments},
		\[
		\widehat\eta_R
		:=
		\max\{\eta_R,\eta_R^{\rm out}(r_0)\}
		<\frac12.
		\]
		On every bridge there exists a non-negative radial function $P_j$, zero at
		the two endpoints, such that
		\begin{equation}\label{Pbound}
			0\le P_j(s)\le \widehat\eta_R\beta_\ve(s),
		\end{equation}
		and, with
		\[
		{\mathcal S}_j(s):=\beta_\ve(s)+P_j(s),
		\]
		one has
		\begin{equation}\label{Ssuper}
			L{\mathcal S}_j
			\le
			-\varrho_\ve
			-\frac12{\mathcal V}_\ve\beta_\ve
			<0
			\quad\hbox{on the corresponding radial bridge.}
		\end{equation}
	\end{lemma}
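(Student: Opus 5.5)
The plan is to work entirely in the logarithmic variable $s=-\log|x|$ and to build $P_j$ as the solution of a one-dimensional Dirichlet problem whose source is exactly the function $g_j$ of Lemma~\ref{lem:bridge-moments}. By \eqref{radial-log-laplacian}, a radial function $F(s)$ satisfies $\Delta F=e^{2s}F''$, and $\beta_\ve$ is affine on each bridge $I_j^R=[A_j,B_j]$, so $\beta_\ve''=0$ there. I will define $P_j$ on $I_j^R$ by
\[
-P_j''=g_j,\qquad P_j(A_j)=P_j(B_j)=0 .
\]
For the outer bridge $I_1^R$ I use the same definition, with $\beta_\ve\equiv b_1$ and with the low-amplitude term included in the source, as in the proof of Lemma~\ref{lem:bridge-moments}. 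Since $g_j\ge0$, the maximum principle gives $P_j\ge0$. With ${\mathcal S}_j=\beta_\ve+P_j$, the definition of $g_j$ gives exactly
\[
\Delta{\mathcal S}_j=-2{\mathcal V}_\ve\beta_\ve-\overline\varrho_\ve\le-2{\mathcal V}_\ve\beta_\ve-\varrho_\ve .
\]

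To prove \eqref{Pbound} I will use the Green function of $-\partial_{ss}$ on $[A_j,B_j]$, namely
\[
K(s,t)=\frac{(\min\{s,t\}-A_j)(B_j-\max\{s,t\})}{\ell_j},\qquad \ell_j=B_j-A_j,
\]
and split $P_j=P^L+P^R+P^M$ according to the decomposition of Lemma~\ref{lem:bridge-moments}.

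For the left piece I use $K(s,t)\le (t-A_j)(B_j-s)/\ell_j$ together with \eqref{leftmoment}, which gives
\[
P^L(s)\le \eta_R\,\beta_\ve(A_j)\frac{B_j-s}{\ell_j}.
\]
Symmetrically, $K(s,t)\le (s-A_j)(B_j-t)/\ell_j$ and \eqref{rightmoment} give
\[
P^R(s)\le \eta_R\,\beta_\ve(B_j)\frac{s-A_j}{\ell_j}.
\]
Because $\beta_\ve$ is affine on $I_j^R$, the sum of these two bounds is exactly $\eta_R\beta_\ve(s)$. For the middle piece, $K\le\ell_j/4$, so $P^M\le \ell_j^2\|g_j^M\|_\infty/8=o_\ve(1)\min\{\beta_\ve(A_j),\beta_\ve(B_j)\}\le o_\ve(1)\beta_\ve(s)$ by \eqref{middlemoment}. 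Hence $P_j\le(\eta_R+o_\ve(1))\beta_\ve$. Choosing $R$, and on the outer bridge first $r_0$ through \eqref{eta-outer}, so that $\widehat\eta_R$ stays strictly below $\tfrac12$ with some margin, absorbs the $o_\ve(1)$ and yields \eqref{Pbound}, after redefining $\widehat\eta_R$ slightly if necessary.

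It remains to control the zero-order term of $L=\Delta+f_+'(U)$. From the previous step,
\[
L{\mathcal S}_j\le-2{\mathcal V}_\ve\beta_\ve-\varrho_\ve+f_+'(U)\,(1+\widehat\eta_R)\beta_\ve ,
\]
so it suffices to show that on the bridge
\[
f_+'(U)\beta_\ve\le(1+o(1)){\mathcal V}_\ve\beta_\ve+o(1)\varrho_\ve .
\]
Then $(1+\widehat\eta_R)(1+o(1))<\tfrac32$ gives \eqref{Ssuper}, after absorbing $o(1)\varrho_\ve$ by replacing $\varrho_\ve$ with $(1-o(1))\varrho_\ve$ or by enlarging the coefficient of $\overline\varrho_\ve$ slightly in $g_j$. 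I will check this pointwise estimate region by region.
\begin{itemize}
\item Within $2R_\ve$ logarithmic units of a core, \eqref{core-fprime} gives $f_+'(U)-\lambda=V_i(1+o(1))$.
\item In the interior transition region, $f_+'(U)\le C U^{p-1}\cdot\lambda Ue^{U^p}$. By \eqref{betaUbound}, $\beta_\ve U^{p-1}\le C/\ve$, so $f_+'(U)\beta_\ve\le C\ve^{-1}{\mathcal D}^{\rm hi}_\ve=o(\ve^{-2}{\mathcal D}^{\rm hi}_\ve)=o(\varrho_\ve)$. This is exactly why the weight $\ve^{-2}$ appears in $\varrho_\ve$.
\item In the outer bridge where $U$ is small, $f_+'(U)-\lambda=\lambda[(1+pU^p)e^{U^p}-1]$ is comparable to $\lambda(e^{U^p}-1)$. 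Multiplied by $b_1$, I expect this to be $o(\ve^{-1}{\mathcal D}^{\rm lo}_\ve)$ only after using $U\gtrsim b_1$ on most of that region, or else $\le o(1)\lambda b_1$ where $U$ is tiny.
\end{itemize}

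The main obstacle is this last comparison of $f_+'(U)$ with ${\mathcal V}_\ve+\varrho_\ve/\beta_\ve$ in the overlap zones: where $|\log(|x|/d_i)|\in[R_\ve,2R_\ve]$ the cutoffs $\chi$ only partially switch on ${\mathcal D}^{\rm hi}_\ve$, and in the outer low-amplitude regime where $U\sim b_1$. In the overlap zones I would instead use the core expansion, valid up to $2R_\ve$, to keep $f_+'(U)$ comparable to $V_i$, and I would use the extra $\ve^{-1}\chi$ term in $\Theta_{i,\ve}$ to absorb any loss.
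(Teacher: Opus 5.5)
\paragraph{The construction matches; the problem is the operator.} Your construction of $P_j$ and your proof of \eqref{Pbound} are exactly the paper's argument. This covers the Dirichlet problem $-P_j''=g_j$, the split $P_j=P^L+P^R+P^M$, the two Green-function bounds combined with \eqref{leftmoment}--\eqref{rightmoment}, the affine identity for $\beta_\ve$, and the middle estimate from \eqref{middlemoment}. (A harmless slip: $K\le\ell_j/4$ only gives $\ell_j^2/4$; the factor $1/8$ comes from $\int K(s,t)\,dt=(s-A_j)(B_j-s)/2$.) The gap is in your last step. You took $L=\Delta+f_+'(U)$, but in this paper $L=\Delta+\mathcal V_\ve$ with $\mathcal V_\ve=\lambda+\sum_iV_i$. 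This is fixed by \eqref{Ldef} and by the remark $L\beta_\ve=\mathcal V_\ve\beta_\ve$ just before Lemma~\ref{lem:bridge-moments}. The discrepancy $q_\ve=f'(U)-\mathcal V_\ve$ is put into $N(\phi)$ and controlled separately through \eqref{linear}, not inside the barrier. So your proposal proves a statement about a different operator, and even for that operator it leaves the decisive step open. The comparison of $f_+'(U)$ with $\mathcal V_\ve$, which you call the main obstacle, is only sketched in the overlap zones. In the outer low-amplitude zone it is merely ``expected.''

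\paragraph{The correct one-line finish.} With the correct operator, that whole region-by-region analysis disappears and the conclusion is immediate from what you already have. Using $\overline\varrho_\ve\ge\varrho_\ve$, \eqref{Pbound} and $\widehat\eta_R<\frac12$,
\[
L\mathcal S_j=-2\mathcal V_\ve\beta_\ve-\overline\varrho_\ve+\mathcal V_\ve(\beta_\ve+P_j)
\le-(1-\widehat\eta_R)\mathcal V_\ve\beta_\ve-\varrho_\ve
\le-\varrho_\ve-\tfrac12\mathcal V_\ve\beta_\ve .
\]
This is precisely the paper's proof. If you genuinely wanted the full linearization, the missing ingredient is the pointwise bound $|q_\ve|\beta_\ve\le o(1)\varrho_\ve$, established in the proof of Proposition~\ref{prop:error}. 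Its low-amplitude part rests on $yb_1U^p/(b_1+\ve^{-1}yU^{p+1})\le C(\ve b_1)^{p/(p+1)}$. Even then you would obtain only $-(1-o(1))\varrho_\ve$ and would have to adjust $g_j$, as you indicate.
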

	
	\begin{proof}
		Let $P_j$ solve
		\[
		-P_j''=g_j,
		\qquad
		P_j(A_j)=P_j(B_j)=0,
		\]
		where $g_j$ is given in Lemma \ref{lem:bridge-moments}.  Positivity follows
		from the one-dimensional maximum principle.  The Green function on
		$[A_j,B_j]$ is
		\[
		G(s,t)=
		\begin{cases}
			\dfrac{(t-A_j)(B_j-s)}{B_j-A_j},&t\le s,\\[2mm]
			\dfrac{(s-A_j)(B_j-t)}{B_j-A_j},&t\ge s.
		\end{cases}
		\]
		For every $s,t\in[A_j,B_j]$ the Green function satisfies the two
		bounds
		\begin{equation*}\label{Green-affine-bounds}
			G(s,t)
			\le
			\frac{B_j-s}{B_j-A_j}(t-A_j),
			\qquad
			G(s,t)
			\le
			\frac{s-A_j}{B_j-A_j}(B_j-t).
		\end{equation*}
		Indeed, one inequality is an equality on each side of $t=s$, and the
		other follows from $t\le s$ or $t\ge s$, respectively.  Applying the first
		bound to $g_j^{\,L}$ and the second to $g_j^{\,R}$ gives
		\[
		P_j^{L}(s)
		\le
		\widehat\eta_R\frac{B_j-s}{B_j-A_j}\beta_\ve(A_j),
		\qquad
		P_j^{R}(s)
		\le
		\widehat\eta_R\frac{s-A_j}{B_j-A_j}\beta_\ve(B_j).
		\]
		For the middle contribution,
		\[
		\int_{A_j}^{B_j}G(s,t)\,dt
		=
		\frac{(s-A_j)(B_j-s)}2
		\le\frac{(B_j-A_j)^2}{8},
		\]
		so \eqref{middlemoment} gives $P_j^M=o_\ve(1)\beta_\ve(s)$ because
		$\beta_\ve(s)\ge\min\{\beta_\ve(A_j),\beta_\ve(B_j)\}$.  Finally,
		$\beta_\ve$ is affine on the interval:
		\[
		\beta_\ve(s)
		=
		\frac{B_j-s}{B_j-A_j}\beta_\ve(A_j)
		+
		\frac{s-A_j}{B_j-A_j}\beta_\ve(B_j).
		\]
		After increasing $R$ and decreasing $\ve_0$ if necessary, we conclude
		\[
		P_j\le\widehat\eta_R\beta_\ve.
		\]
		The choice of $r_0$ and $R$ gives \eqref{Pbound}.
		
		For a radial function $F(s)$,
		\[
		\Delta F=e^{2s}F''.
		\]
		Since $\beta_\ve''=0$ on a bridge and
		\[
		-P_j''
		=
		e^{-2s}\bigl(2{\mathcal V}_\ve\beta_\ve+\overline\varrho_\ve\bigr),
		\]
		we obtain
		\begin{align*}
			L{\mathcal S}_j
			&=
			-2{\mathcal V}_\ve\beta_\ve-\overline\varrho_\ve
			+{\mathcal V}_\ve(\beta_\ve+P_j)\\
			&=
			-{\mathcal V}_\ve\beta_\ve-\overline\varrho_\ve+{\mathcal V}_\ve P_j\\
			&\le
			-(1-\widehat\eta_R){\mathcal V}_\ve\beta_\ve-\overline\varrho_\ve
			\le -(1-\widehat\eta_R){\mathcal V}_\ve\beta_\ve-\varrho_\ve,
		\end{align*}
		which implies \eqref{Ssuper}.
		
		We record the maximum-principle consequence that will be used later.  If
		$D$ is one of the bridges, $\psi\in C^2(D)\cap C(\overline D)$ satisfies
		\[
		L\psi\le0\quad\hbox{in }D,
		\qquad
		\psi\ge0\quad\hbox{on }\partial D,
		\]
		then $\psi\ge0$ in $D$.  Indeed, if $\psi/{\mathcal S}_j$ had a negative
		interior minimum, then at that point
		\[
		L\psi
		=
		{\mathcal S}_j\Delta\!\left(\frac\psi{{\mathcal S}_j}\right)
		+
		\frac\psi{{\mathcal S}_j}L{\mathcal S}_j
		>0,
		\]
		a contradiction.  Thus the strict positive supersolution restores the
		maximum principle despite the positive zeroth-order potential.
	\end{proof}
	
	\begin{proposition}[error estimate]\label{prop:error}
		For all sufficiently small $\ve$,
		\begin{equation}\label{estRR}
			\|E\|_*\le C\ve.
		\end{equation}
		where $E$ is defined in \eqref{error}.
		Furthermore, if
		\[
		q_\ve(x):=f_+'(U)-\lambda-\sum_{j=1}^kV_j,
		\]
		then
		\begin{equation}\label{linear}
			\|q_\ve\phi\|_*
			\le o(1)\,\|\phi\|_{**}
		\end{equation}
		uniformly in the parameter box.
	\end{proposition}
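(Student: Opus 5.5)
The plan is to split $\Omega$ into the regions already used in the construction: the $k$ core regions $\{|\log(|x|/d_j)|\le 2R_\ve\}$, the inter-bubble bridges, the outer bridge $\{U\ge1\}\cap B(0,r_0)$ together with its low-amplitude part, and the exterior $\Omega\setminus B(0,r_0)$. The key algebraic identity is that, by \eqref{juan} and $(-\Delta-\lambda)U_j=\frac{b_j}{p}V_j$, one has
\[
E=\Delta U+\lambda Ue^{U^p}=-\lambda U-\sum_{j=1}^k\frac{b_j}{p}V_j+\lambda Ue^{U^p}
=\lambda U\bigl(e^{U^p}-1\bigr)-\sum_{j=1}^k\frac{b_j}{p}V_j .
\]
Every estimate will start from this formula, and each piece will be compared with the matching summand of $\varrho_\ve$ in \eqref{rho-positive}.

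\emph{Core regions.} For $|t|\le 2R_\ve$, \eqref{core-f} gives
\[
\lambda Ue^{U^p}-\frac{b_j}{p}V_j=\frac{b_j}{p}V_j\,O\!\left(\ve(1+|t|)+\frac{1+t^2}{G_j}\right)\le C\ve\, b_jV_j\Theta_{j,\ve}(t).
\]
The $V_i$ with $i\neq j$ are tails of the other bubbles and are bounded by their own weights $b_iV_i\Theta_{i,\ve}$. Note that $\Theta_{i,\ve}\ge 1+\ve^{-1}$ once $|\log(|x|/d_i)|\ge2R_\ve$, so $b_iV_i\le C\ve\, b_iV_i\Theta_{i,\ve}$ there. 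The terms $\lambda U$ and $-\lambda U$ are bounded by $C b_1\ge$ (small multiple of) the constant term $b_1$ in the weight, after using $U\le C b_1 s$ away from the cores. More precisely, $\lambda U$ cancels exactly inside $\lambda U(e^{U^p}-1)$, so only $\lambda Ue^{U^p}$ needs to be controlled.

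\emph{Bridges and exterior.} In the transition zones, i.e. the support of $\chi_{\rm tr}$, I split according to $\chi(U)$. The high part $\chi(U)\lambda Ue^{U^p}$ equals $\ve^{2}\cdot\ve^{-2}\mathcal D^{\rm hi}_\ve$, so it is $\le\ve^{2}\varrho_\ve$. The low part $\lambda U(e^{U^p}-1)$ is $\ve\cdot\ve^{-1}\mathcal D^{\rm lo}_\ve$. The cut-off overlap regions $R_\ve\le|t|\le2R_\ve$ are handled by combining the core expansion with the $\ve^{-1}\chi$ term in $\Theta$. Outside $B(0,r_0)$, $U=O(b_1)$ and $|E|\le C U^{p+1}+\sum_j b_jV_j\le Cb_1^{p+1}+C b_1 d_1^2$, which is $\le C\ve b_1$. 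This proves \eqref{estRR}.

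\emph{Estimate for $q_\ve$.} For \eqref{linear}, write $|q_\ve\phi|\le|q_\ve|\beta_\ve\|\phi\|_{**}$, so it suffices to show $|q_\ve|\beta_\ve=o(1)\varrho_\ve$.
\begin{itemize}
\item In the cores, \eqref{core-fprime} gives $|q_\ve|\le V_jO(\ve(1+|t|)+(1+t^2)/G_j)+\sum_{i\ne j}V_i$. Since $\beta_\ve\le Cb_j(1+\text{slope}\cdot|t|)$ near $s_j$, with slope $o(1)$ relative to $b_j$ on this range, we get $|q_\ve|\beta_\ve\le o(1)b_jV_j\Theta_{j,\ve}$. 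The main point is that $\beta_\ve(s)\approx b_j$ for $|t|\le2R_\ve$, because $R_\ve\ll s_{j+1}-s_j$.
\item On the bridges, $f_+'(U)=\lambda(pU^p+1)e^{U^p}$, so $|q_\ve|\le pU^{p-1}\cdot\lambda Ue^{U^p}+\lambda(e^{U^p}-1)+\sum V_i$. By \eqref{betaUbound}, $\beta_\ve pU^{p-1}\le C/\ve$. Hence $\beta_\ve|q_\ve|\le C\ve^{-1}\lambda Ue^{U^p}+\dots$, which is $\le C\ve\cdot\ve^{-2}\mathcal D^{\rm hi}_\ve=o(1)\varrho_\ve$. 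This is exactly why the factor $\ve^{-2}$ was placed in front of $\mathcal D^{\rm hi}_\ve$.
\item In the low region, $\beta_\ve=b_1$ and $U<2$ with $U^p\le C$. Then $\lambda(e^{U^p}-1)\,b_1\le Cb_1U^{p}$, which must be compared with $\ve^{-1}\lambda U(e^{U^p}-1)\ge\ve^{-1}\lambda U^{p+1}$. The ratio is $\ve b_1/U\le C\ve b_1/U$. Where $U\ll \ve b_1$ (far out), the constant weight $b_1$ is used instead: $b_1\cdot CU^p=o(b_1)$.
\end{itemize}

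I expect the main obstacle to be the uniform bookkeeping on the overlap annuli and the far bridges. There one has to verify that the cross-bubble tails $V_i\beta_\ve$, with $\beta_\ve$ evaluated at a point far from $s_i$, are still dominated by $o(1)b_iV_i\Theta_{i,\ve}$. For this I would use the affine growth bound \eqref{beta-right-growth}, namely $\beta_\ve\le Cb_i(1+|s-s_i|)$, combined with the exponential decay $e^{2s}V_i\sim e^{-2|s-s_i|}$ and the $\ve^{-1}$ summand of $\Theta$ beyond $2R_\ve$.
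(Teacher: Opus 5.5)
Your estimate of $E$ is the paper's argument: the identity $E=\lambda U(e^{U^p}-1)-\frac1p\sum_ib_iV_i$, the core expansion \eqref{core-f} absorbed by $\ve\Theta_{j,\ve}$, the $\ve^{-1}$ tail of $\Theta_{i,\ve}$ for the bubble terms, the splitting by $\chi(U)$ into $\mathcal D^{\rm hi}_\ve$ and $\mathcal D^{\rm lo}_\ve$, and the direct bound outside $B(0,r_0)$. One small correction: in a core the leftover $\lambda U$ is of size $\gamma_j$, not $O(b_1)$. It is absorbed because $b_jV_j\ge c\,b_jd_j^{-2}e^{-8R_\ve}$ is superpolynomially large.

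The gap is in \eqref{linear}, at exactly the step you flag. The cross terms $\beta_\ve V_i$ are \emph{not} $o(1)\,b_iV_i\Theta_{i,\ve}$, and the tools you name cannot make them so. The quotient $\beta_\ve V_i/(b_iV_i\Theta_{i,\ve})=\beta_\ve/(b_i\Theta_{i,\ve})$ does not contain $V_i$, so the decay of $e^{-2s}V_i$ is irrelevant. Moreover, $\Theta_{i,\ve}$ saturates at about $\ve^{-1}$, whereas $\beta_\ve$ rises to $b_{i-1}=b_i(\ve m_i)^{1-p}\approx b_i/(\ve m_i)$.

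To quantify this, let $u=s_i-s$ and $\ell=s_i-s_{i-1}\approx G_i/2$. On $[s_{i-1},s_i]$ one has $\beta_\ve\approx b_i+b_{i-1}u/\ell$, so the quotient is about $\ve+u/(m_i\ell)$, which is of order one. It is already $\approx1/m_i$ in core $i-1$; there the remedy is to compare with the local term $b_{i-1}V_{i-1}$ instead. But at, say, $u=\ell/4$ nothing in the bubble part of $\varrho_\ve$ helps:
\begin{itemize}
\item $b_{i-1}V_{i-1}\Theta_{i-1,\ve}$ is smaller than $b_iV_i$ by roughly $e^{-\ell}$;
\item $b_1$ is negligible;
\item $b_iV_i\Theta_{i,\ve}$ only gives the ratio $\approx1/(4m_i)$.
\end{itemize}

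The only summand of \eqref{rho-positive} that can absorb $\beta_\ve V_i$ there is $\ve^{-2}\mathcal D^{\rm hi}_\ve$. So what is missing is a \emph{lower} bound on the nonlinear density in terms of $b_iV_i$, which Lemma~\ref{lem:localization} does not supply (it only gives upper bounds). Such a bound can be obtained. On the bridge, $U=\gamma_i(a_i+\delta)$ with $\delta=[\log8-4(1+\Gamma_i)u]/(pG_i)$ up to negligible errors, and $U/\gamma_i\ge c\ve$. Then \eqref{matchingidentity} together with convexity of $x\mapsto x^p$ gives
\[
\frac{p\,\lambda Ue^{U^p}}{b_iV_i}\ \ge\ c\,\ve\,\exp\Bigl\{c\,\frac{u^2}{G_i}-C\ve u\Bigr\}.
\]
The linear drift here, coming from $\sigma_i-1$ and $\Gamma_i$, cannot be given a sign uniformly in the parameter box, so you must split the bridge:
\begin{itemize}
\item For $u\ge K\ve G_i$ with $K$ large, the right side is superpolynomially large, since $\ve^2G_i\to\infty$ for $i\ge2$. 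Hence $\beta_\ve V_i\le b_{i-1}V_i\le C\ve^{-1}b_iV_i=o(1)\,\ve^{-2}\mathcal D^{\rm hi}_\ve$.
\item For $2R_\ve\le u\le K\ve G_i$ one has $\beta_\ve\le C_Kb_i$, hence $\beta_\ve V_i\le C_K\ve\,b_iV_i\Theta_{i,\ve}$.
\end{itemize}
The paper covers these terms only with the phrase ``exactly as in the core estimate'', so this split is the argument that has to be supplied.

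A smaller point concerns the low-amplitude region. Keep the factor $1-\chi(U)$ in $\mathcal D^{\rm lo}_\ve$, and split $q_\ve\beta_\ve=\chi(U)q_\ve\beta_\ve+[1-\chi(U)]q_\ve\beta_\ve$ as the paper does. Your two-regime comparison then reproduces the paper's bound $C(\ve b_1)^{p/(p+1)}$.
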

	
	\begin{proof}
		In every logarithmic core,
		Lemma \ref{lem:core-expansion}, together with the contributions of the
		other bubbles and the definition of \eqref{Thetaweight}, gives
		\[
		|E|\le C\ve\varrho_\ve,
		\qquad
		|q_\ve|\beta_\ve=o(1)\varrho_\ve.
		\]
		
		On the complement of the cores, $\chi_{\rm tr}=1$ except in the smoothing layers, which are still covered
		by the core estimate.  Using the exact cancellation in the definition of
		$E$,
		\[
		|E|
		\le
		\frac1p\sum_i b_iV_i
		+
		\lambda U(e^{U^p}-1).
		\]
		The far-tail term $\ve^{-1}$ in \eqref{Thetaweight} gives
		\[
		\sum_i b_iV_i\le C\ve\varrho_\ve.
		\]
		For the nonlinear term we split according to $\chi(U)$.  On the
		high-amplitude part,
		\[
		\chi(U)\lambda U(e^{U^p}-1)
		\le {\mathcal D}^{\rm hi}_\ve
		\le
		\ve\,\ve^{-2}{\mathcal D}^{\rm hi}_\ve,
		\]
		whereas on the low-amplitude part the term is exactly
		${\mathcal D}^{\rm lo}_\ve$ and hence
		\[
		{\mathcal D}^{\rm lo}_\ve
		=
		\ve\,(\ve^{-1}{\mathcal D}^{\rm lo}_\ve).
		\]
		This proves \eqref{estRR}.
		
		For the linear remainder, write
		\[
		q_\ve=f_+'(U)-\lambda-\sum_iV_i.
		\]
		The bubble-potential terms and the bounded $\lambda$ contribution are
		$o(1)\varrho_\ve/\beta_\ve$ exactly as in the core estimate.
		On the high-amplitude transition region,
		\eqref{betaUbound} gives
		\[
		\chi(U)|q_\ve|\beta_\ve
		\le
		\frac C\ve{\mathcal D}^{\rm hi}_\ve
		=
		C\ve\,(\ve^{-2}{\mathcal D}^{\rm hi}_\ve).
		\]
		It remains to treat the low-amplitude part, which occurs only in the outer
		bridge.  There $\beta_\ve=b_1$, $0\le U\le2$, and
		\[
		|f_+'(U)-\lambda|\le CU^p.
		\]
		Put $y=1-\chi(U)\in[0,1]$.  Since
		$e^{U^p}-1\ge cU^p$ on $[0,2]$,
		\[
		{\mathcal D}^{\rm lo}_\ve\ge c\,yU^{p+1}
		\]
		in the transition region.  The elementary estimate
		\[
		\frac{y b_1U^p}{b_1+\ve^{-1}yU^{p+1}}
		\le C(\ve b_1)^{p/(p+1)}=o(1)
		\]
		is uniform in $y$ and $U$.  Thus
		\[
		[1-\chi(U)]|q_\ve|\beta_\ve
		=o(1)\varrho_\ve.
		\]
		It remains to record explicitly the region
		$\Omega\setminus B(0,r_0)$, where $\chi_{\rm tr}=0$.  From
		\eqref{juan} and the superalgebraic smallness of the $d_j$'s,
		\begin{equation*}\label{exterior-U-small}
			|U(x)|\le Cb_1,
			\qquad
			\sum_{j=1}^k b_jV_j(x)=o(\ve b_1)
			\quad\hbox{uniformly for }|x|\ge r_0.
		\end{equation*}
		Moreover
		\[
		b_1^p
		=
		\gamma_1^{p(1-p)}
		=
		O\!\left(\frac{\ve}{|\log\ve|}\right).
		\]
		Hence
		\[
		\lambda U(e^{U^p}-1)
		=
		O(b_1^{p+1})
		=
		o(\ve b_1),
		\]
		and therefore
		\[
		|E|\le C\ve b_1\le C\ve\varrho_\ve
		\quad\hbox{on }\Omega\setminus B(0,r_0).
		\]
		Similarly,
		\[
		|f_+'(U)-\lambda|
		\le C|U|^p=O(b_1^p),
		\]
		and the bubble potentials are superalgebraically small there.  Since
		$\beta_\ve=b_1$ in the exterior,
		\[
		|q_\ve|\beta_\ve=o(b_1)\le o(1)\varrho_\ve.
		\]
		This completes the proof of both \eqref{estRR} and \eqref{linear}.
	\end{proof}
	
	\section{Projected nonlinear problem and finite-dimensional reduction}
	
	We look for a solution in the form
	\[
	u=U+\phi,
	\]
	with $U$ defined in \eqref{approxisol}.  The correction is required to be
	small in the norm \eqref{starnorm2}.  We work throughout in the symmetric
	class
	\begin{equation}\label{simmetria}
		\phi(x_1,x_2)
		=
		\phi(-x_1,x_2)
		=
		\phi(x_1,-x_2).
	\end{equation}
	
	For the fixed-point argument it is convenient to extend the nonlinearity to
	the whole real line:
	\begin{equation*}\label{fextension}
		f(s):=\lambda s\exp\{(s_+)^p\},
		\qquad
		s_+:=\max\{s,0\}.
	\end{equation*}
	Since $U>0$, this extension does not change any of the expansions obtained
	in Section~\ref{sec2}.  Writing
	\[
	q_\ve
	:=
	f'(U)-\lambda-\sum_{j=1}^kV_j,
	\]
	Proposition~\ref{prop:error} gives
	\begin{equation*}\label{qweighted}
		\|q_\ve\phi\|_*
		=
		o(1)\|\phi\|_{**}.
	\end{equation*}
	
	Define
	\begin{equation}\label{Ldef}
		L\phi
		:=
		\Delta\phi
		+\left(\lambda+\sum_{j=1}^kV_j\right)\phi
	\end{equation}
	and
	\begin{equation*}\label{ne}
		N(\phi)
		:=
		f(U+\phi)-f(U)-f'(U)\phi+q_\ve\phi.
	\end{equation*}
	Then the equation for $\phi$ is
	\begin{equation*}\label{lv1}
		\begin{cases}
			L\phi=-E-N(\phi) & \text{in }\Omega,\\
			\phi=0 & \text{on }\partial\Omega.
		\end{cases}
	\end{equation*}
	
	After rescaling around the $j$-th core, $L$ tends to
	\[
	\mathcal L_0\psi:=\Delta\psi+e^w\psi.
	\]
	The bounded kernel of $\mathcal L_0$ is generated by the dilation mode and
	the two translation modes \cite{bp}.  The translation modes are odd with
	respect to the coordinate reflections, so only the dilation mode remains
	in the class \eqref{simmetria}.  We write
	\begin{equation*}\label{glizeta}
		Z_j(x)
		:=
		\frac{|x|^2-d_j^2}{|x|^2+d_j^2}
		=
		Z(x/d_j),
		\qquad
		Z(y):=\frac{|y|^2-1}{|y|^2+1}.
	\end{equation*}
	
	Fix $R_0>1$ and choose a smooth radial function
	$\zeta:[0,\infty)\to[0,1]$ satisfying
	\[
	\zeta=1
	\quad\text{on }[R_0^{-1},R_0],
	\qquad
	\zeta=0
	\quad\text{outside }[(2R_0)^{-1},2R_0].
	\]
	Set
	\begin{equation*}\label{etaj}
		\zeta_j(x):=\zeta(|x|/d_j).
	\end{equation*}
	We use the constant
	\begin{equation}\label{kappazeta}
		\kappa_\zeta
		:=
		\int_{\mathbb R^2}e^wZ^2\zeta\,dy>0,
		\qquad
		C_\zeta
		:=
		\frac{8\pi}{p\kappa_\zeta}.
	\end{equation}
	The quantity $C_\zeta$ depends on $\ve$ only through $p=2+\ve$; this
	harmless dependence is suppressed from the notation.
	
	For a symmetric $h\in L^\infty(\Omega)$, consider the problem
	\begin{equation}\label{l01}
		L\phi
		=
		h+\sum_{j=1}^k b_jc_jV_jZ_j\zeta_j
		\quad\text{in}\quad\Omega,
	\end{equation}
	\begin{equation}\label{l02}
		\phi=0\quad\text{on}\quad\partial\Omega,
	\end{equation}
	\begin{equation}\label{l3}
		\int_\Omega V_jZ_j\zeta_j\phi=0,
		\qquad j=1,\ldots,k.
	\end{equation}
	
	For fixed $(\ve,m)$, the weight $\beta_\ve$ is bounded away from zero,
	so $\|\cdot\|_{**}$ is an equivalent weighted norm on
	$C(\overline\Omega)$. We write $\mathcal C_{**}$ for
	$C(\overline\Omega)$ endowed with this norm.
	
	\begin{proposition}\label{p1}
		There exist $\ve_0>0$ and $C>0$, depending only on the fixed parameter
		box, such that for every $0<\ve<\ve_0$ and every symmetric
		$h\in L^\infty(\Omega)$, problem
		\eqref{l01}--\eqref{l3} has a unique solution
		$(\phi,c_1,\ldots,c_k)$.  Moreover
		\begin{equation}\label{est}
			\|\phi\|_{**}
			+
			\max_{1\le j\le k}|c_j|
			\le
			C\|h\|_*.
		\end{equation}
		For fixed $\ve$, the solution depends continuously on $(m,h)$ in the
		corresponding weighted norms.
	\end{proposition}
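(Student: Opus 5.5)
The plan is the standard three-step scheme for projected linear problems: first an a priori estimate for solutions of \eqref{l01}--\eqref{l3}, then existence via Fredholm theory, then continuity. The new ingredient compared with single-bubble constructions is the weighted norm pair $\|\cdot\|_*$, $\|\cdot\|_{**}$. This pair is handled by the bridge supersolutions of Lemma~\ref{lem:affine-bridge} in the necks and by blow-up arguments in the cores.

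\textbf{Step 1: the coefficients $c_j$.} Multiply \eqref{l01} by $Z_j\zeta_j$ and integrate. By the orthogonality \eqref{l3} and the rescaled identity $\mathcal L_0 Z=0$, the left-hand side reduces to $\int\phi\,[L(Z_j\zeta_j)]$. Here $L(Z_j\zeta_j)$ is supported where $\nabla\zeta_j\neq0$ (size $d_j^{-2}$, annuli $|y|\sim R_0$), plus $\lambda Z_j\zeta_j$, plus cross terms $V_iZ_j\zeta_j$ with $i\ne j$. Each of these is superalgebraically small by Lemma~\ref{lem:robust-scales}. On the right-hand side, scale separation kills the cross terms $b_ic_i\int V_iZ_iZ_j\zeta_i\zeta_j$ for $i\ne j$. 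The diagonal term gives $b_jc_j\kappa_\zeta$, and $\int hZ_j\zeta_j$ is bounded by $C\|h\|_*b_j$, since $\varrho_\ve\le Cb_jV_j$ on $\supp\zeta_j$ (there $\Theta_{j,\ve}=O(1)$ and the transition densities vanish). The $\phi$-term is at most $C\|\phi\|_{**}b_j$, since $\beta_\ve\approx b_j$ on the core. Hence
\[
|c_j|\le C\|h\|_*+C\|\phi\|_{**}.
\]
This bound is useful only once $\|\phi\|_{**}$ is itself controlled. I therefore expect to refine it: instead of bounding the cutoff-gradient term crudely, I will show it is $o(1)\|\phi\|_{**}$ by using the contradiction hypothesis below.

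\textbf{Step 2: a priori bound by contradiction.} Suppose there are $\ve_n\to0$ and solutions with $\|\phi_n\|_{**}=1$ and $\|h_n\|_*\to0$. Then the $c_{j,n}$ are bounded. The argument proceeds in three parts.

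First, in each core $j$, rescale $\psi_n(y)=\phi_n(d_jy)/b_j$. The weights give $|\psi_n|\le C$ on $|\log|y||\le R_\ve$, and the equation converges to
\[
\mathcal L_0\psi=c_j\,e^wZ\zeta.
\]
Testing with $Z$ forces $c_j\to0$. The limit $\psi$ is then a bounded, symmetric kernel element orthogonal to $e^wZ\zeta$, so $\psi=0$ by \cite{bp} and symmetry. Hence $\phi_n=o(b_j)$ uniformly on $|y|\le R$ for every fixed $R$. The same argument shows $|c_j|=o(1)$, which closes the refinement of Step 1.

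Second, on the bridges, compare $\pm\phi_n$ with
\[
\big(o(1)+\|h_n\|_*+\max_j|c_{j,n}|\big)\,{\mathcal S}_j
\]
using the maximum principle at the end of the proof of Lemma~\ref{lem:affine-bridge}. This is where \eqref{Ssuper} is used:
\begin{itemize}
\item $L{\mathcal S}_j\le-\varrho_\ve$ absorbs the right-hand side, since $|h_n|\le\|h_n\|_*\varrho_\ve$ and the $c_j$ terms are also dominated by $\varrho_\ve$;
\item the boundary values at $s=A_j,B_j$ (that is, $|y|=R$) are $o(\beta_\ve)$ by the first part;
\item ${\mathcal S}_j\le(1+\widehat\eta_R)\beta_\ve$ by \eqref{Pbound}.
\end{itemize}
The supersolution is radial while $\phi_n$ is not, but the comparison is a pointwise maximum principle in the two-dimensional annulus, so this causes no difficulty. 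The exterior region $\Omega\setminus B(0,r_0)$ is handled by the coercivity of $-\Delta-\lambda$, since $\lambda<\lambda_1$ and the bubble potentials are negligible there. Together these give $\|\phi_n\|_{**}=o(1)$, contradicting $\|\phi_n\|_{**}=1$.

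Third, the core-to-bridge handoff at fixed $R$ must be done with $R$ chosen as in Lemma~\ref{lem:affine-bridge}. The rescaled convergence is therefore used on $|y|\le R$ with this fixed $R$, and the intermediate zone $R\le|y|\le e^{R_\ve}$ is already part of the bridge intervals $I_j^R$.

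\textbf{Step 3: existence and continuity.} For fixed $\ve$ the norms are equivalent to $L^\infty$. Work in the closed subspace of symmetric $H^1_0$ functions satisfying \eqref{l3}, and write the problem as $\phi+K\phi=\tilde h$ with $K$ compact (the inverse Laplacian applied to the bounded potential, followed by projection). Step 2 gives injectivity, so the Fredholm alternative yields existence and uniqueness with the $c_j$ determined linearly. Elliptic regularity gives continuity, and the bound \eqref{est} transfers. Continuity in $m$ follows from uniqueness together with the uniform estimate and continuous dependence of the coefficients on $m$.

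The main obstacle is the second part of Step 2: making the bridge comparison absorb the non-radial high-amplitude density and the $o(1)$ boundary data with constants uniform in $\ve$. This is why everything hinges on \eqref{Ssuper} holding with $\varrho_\ve$ on the right.
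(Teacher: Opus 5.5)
Your proposal is correct and follows the paper's proof (Lemma~\ref{lemma1}, Lemma~\ref{lem:two-scale-barrier} and the Fredholm argument for Proposition~\ref{p1}) essentially step by step: a contradiction argument with bounded coefficients; core blow-ups in which testing against $Z$ forces $c_j\to0$ (this is the integrated form of the paper's Wronskian identity for the radial average), after which nondegeneracy, symmetry and \eqref{l3} kill the limit; comparison with the supersolutions ${\mathcal S}_j$ on the bridges; and Fredholm theory plus compactness for existence and continuity. The step to state more carefully is the exterior one, because no core controls the outer endpoint $|x|=r_0$ of the outer bridge, so, as in the paper, you should first show $b_1^{-1}\|\phi_n\|_{L^\infty(\Omega\setminus B(0,r_0))}\to0$ independently of the bridges: pass to the limit in $b_1^{-1}\phi_n$ (bounded by $1$ because $\beta_\ve\le b_1$) on $\overline\Omega\setminus\{0\}$, remove the isolated singularity, and use $\lambda<\lambda_1(\Omega)$; also, for $j<k$ the core smallness holds only on annuli $R^{-1}<|y|<R$ rather than on the full ball $|y|\le R$, which is nevertheless all the bridge comparison uses.
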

	
	The proof is given in Section~\ref{seclinear}.  We denote by
	$T_{\ve,m}h$ the $\phi$-component of the solution of the projected linear
	problem \eqref{l01}--\eqref{l3}.
	
	Consider the nonlinear projected problem
	\begin{align}
		L\phi
		&=
		-E-N(\phi)+\sum_{j=1}^k b_jc_jV_jZ_j\zeta_j\quad\text{in}\quad \Omega,
		\label{nl1}\\
		\phi&=0\quad\text{on}\quad \partial\Omega,
		\label{nl2}\\
		\int_\Omega V_jZ_j\zeta_j\phi&=0,
		\qquad j=1,\ldots,k.
		\label{nl3}
	\end{align}
	
	By Proposition~\ref{p1}, this is equivalent to
	\begin{equation*}\label{fp}
		\phi
		=
		T_{\ve,m}(-E-N(\phi))
		=:A(\phi).
	\end{equation*}
	For fixed $M>0$, let
	\[
	\mathcal B_M
	:=
	\{\phi\in\mathcal C_{**}:\|\phi\|_{**}\le M\ve\}.
	\]
	
	\begin{lemma}[nonlinear remainder]\label{lem:nonlinear}
		For every fixed $M>0$ there is a constant $C_M$, independent of small
		$\ve$, such that
		\begin{align}
			\|f(U+\phi)-f(U)-f'(U)\phi\|_*
			&\le C_M\|\phi\|_{**}^2,
			\label{quadN}\\
			\|N(\phi)\|_*
			&\le C_M\|\phi\|_{**}^2+o(1)\|\phi\|_{**},
			\label{Nbound}
		\end{align}
		for $\phi\in{\mathcal B}_M$.  Moreover
		\begin{equation}\label{Nlipschitz}
			\|N(\phi_1)-N(\phi_2)\|_*
			\le
			\left[
			C_M(\|\phi_1\|_{**}+\|\phi_2\|_{**})+o(1)
			\right]
			\|\phi_1-\phi_2\|_{**}.
		\end{equation}
	\end{lemma}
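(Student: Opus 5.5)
The plan is to reduce everything to a pointwise Taylor bound and then absorb it into the weight $\varrho_\ve$ region by region. For $\phi\in\mathcal B_M$ write
\[
f(U+\phi)-f(U)-f'(U)\phi=\phi^2\int_0^1(1-t)f''(U+t\phi)\,dt .
\]
Since $|\phi|\le M\ve\beta_\ve$ and $U\ge c\beta_\ve$ wherever $U$ is large, the intermediate point $U+t\phi$ stays positive and comparable to $U$. For $s>0$,
\[
f''(s)=\lambda e^{s^p}\bigl[p^2s^{2p-1}+p(p+1)s^{p-1}+\dots\bigr],
\]
so on the region where $U\ge1$ we obtain $|f''(U+t\phi)|\le C\,U^{2(p-1)}\,\lambda Ue^{U^p}\,e^{pU^{p-1}|\phi|}$. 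The key bound is $U^{p-1}|\phi|\le M\ve\beta_\ve U^{p-1}\le CM$, from \eqref{betaUbound} on the bridges and from the core expansion (where $U^{p-1}\sim b_j^{-1}$ and $\beta_\ve\sim b_j$). Hence the exponential factor is bounded by $C_M$. Then
\[
|\phi^2 f''|\le C_M\|\phi\|_{**}^2\,\beta_\ve^2U^{2(p-1)}\,\lambda Ue^{U^p}.
\]
Near the $j$-th core we have $\beta_\ve^2U^{2(p-1)}\le C$, and Lemma~\ref{lem:core-expansion} gives $\lambda Ue^{U^p}\le Cb_jV_j$, which is $\le C\varrho_\ve$. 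On the bridges, \eqref{betaUbound} gives $\beta_\ve^2U^{2(p-1)}\le C\ve^{-2}$, and $\lambda Ue^{U^p}$ there is exactly $\mathcal D^{\rm hi}_\ve$ (or a bubble tail inside the smoothing layers). The factor $\ve^{-2}$ is then exactly the weight $\ve^{-2}\mathcal D^{\rm hi}_\ve$ in \eqref{rho-positive}. This is why that weight carries $\ve^{-2}$, and it gives \eqref{quadN} in the high-amplitude zones.

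In the low-amplitude outer zone ($U\le2$, including $\Omega\setminus B(0,r_0)$) I would use the extension $f(s)=\lambda s e^{(s_+)^p}$. There $|f''(s)|\le C|s|^{p-1}$ for bounded $s$, and $f''$ is Lipschitz-like away from the loss of smoothness at $0$; the exponent $p-1>1$ makes $f\in C^2$. So the remainder is bounded by $C\phi^2(U+|\phi|)^{p-1}\le C M^2\ve^2b_1^2$. Since $\varrho_\ve\ge b_1$, we need $\ve^2 b_1^2\|\phi\|^2_{**}/\ve^2\lesssim b_1\|\phi\|_{**}^2$, i.e. the bound $C_M b_1\|\phi\|_{**}^2\le C_M\varrho_\ve\|\phi\|_{**}^2$. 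This holds because $b_1\le1$. Combining the two zones proves \eqref{quadN}.

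Then \eqref{Nbound} is immediate from \eqref{quadN} together with $\|q_\ve\phi\|_*=o(1)\|\phi\|_{**}$ (Proposition~\ref{prop:error}). For \eqref{Nlipschitz} I will write
\[
N(\phi_1)-N(\phi_2)=\Bigl[\int_0^1 f'(U+\phi_2+t(\phi_1-\phi_2))\,dt-f'(U)\Bigr](\phi_1-\phi_2)+q_\ve(\phi_1-\phi_2).
\]
The bracket is then bounded via the mean value theorem by $\sup|f''|\,(|\phi_1|+|\phi_2|)$, and the same regionwise estimates apply with $\phi^2$ replaced by $(|\phi_1|+|\phi_2|)|\phi_1-\phi_2|$.

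The main obstacle is controlling $e^{(U+\phi)^p}/e^{U^p}$ uniformly in the long bridges. The step that must work is the inequality $\beta_\ve U^{p-1}\le C/\ve$ combined with $|\phi|\le M\ve\beta_\ve$. I also need to check that the smoothing layers, where $\chi_{\rm tr}\in(0,1)$, are covered by the core weights $b_jV_j\Theta_{j,\ve}$; there the $\ve^{-1}$ far-tail term in \eqref{Thetaweight} should absorb the $\ve^{-2}$ loss only partially. I would handle them through the core expansion directly, where $\beta_\ve U^{p-1}=O(1)$ rather than $O(\ve^{-1})$.
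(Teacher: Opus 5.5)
Your proposal is correct and follows the paper's proof essentially step for step. You reduce by Taylor's formula to the weighted bound $\sup_{|\theta|\le1}|f''(U+\theta\phi)|\beta_\ve^2\le C_M\varrho_\ve$, treat the cores via the core expansion with $\beta_\ve\asymp b_j$, and treat the high-amplitude bridges via \eqref{betaUbound}, which both keeps $e^{(U+\theta\phi)^p}/e^{U^p}$ bounded and produces the $\ve^{-2}$ absorbed by $\ve^{-2}\mathcal D^{\rm hi}_\ve$. You handle the low-amplitude outer zone via $|f''|\le C$ and $b_1^2\le b_1\le\varrho_\ve$, obtain \eqref{Nbound} from Proposition~\ref{prop:error}, and obtain \eqref{Nlipschitz} by the same mean-value argument. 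Your treatment of the smoothing layers is the right one and matches the paper's remark that they are ``covered by the core estimate'': they lie inside $|t|\le 2R_\ve$, where $\beta_\ve U^{p-1}=O(1)$ and $\lambda Ue^{U^p}\le Cb_jV_j$. Only the arithmetic line in your low-amplitude paragraph is garbled; the intended bound is simply $\phi^2|f''|\le C\|\phi\|_{**}^2b_1^2\le C\|\phi\|_{**}^2\varrho_\ve$.
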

	
	\begin{proof}
		By Taylor's formula it is enough to prove
		\begin{equation}\label{fsecond-weight}
			\sup_{|\theta|\le1}
			|f''(U+\theta\phi)|\,\beta_\ve^2
			\le C_M\varrho_\ve ,
		\end{equation}
		where
		\[
		f''(s)= \lambda p(1+p+ps^p)s^{p-1}e^{s^p}, \quad s>0.
		\]
		On the $j$-th core,
		\[
		f''(U)
		=
		O(\gamma_j^{p-1}V_j)
		=
		O(b_j^{-1}V_j),
		\]
		whereas $\beta_\ve\asymp b_j$.  Hence
		\[
		f''(U)\beta_\ve^2
		\le Cb_jV_j
		\le C\varrho_\ve.
		\]
		Since $\|\phi\|_{**}\le M\ve$, the same estimates hold uniformly
		for $U+\theta\phi$, $|\theta|\le1$. On an inter-bubble transition
		region, \eqref{betaUbound} gives
		\[
		|\phi|U^{p-1}\le M\ve\,\beta_\ve U^{p-1}\le C_M,
		\]
		so the mean value theorem yields
		\[
		|(U+\theta\phi)_+^p-U^p|\le C_M.
		\]
		Thus the relevant exponential factors are comparable up to a constant
		which depends only on $M$. On the bounded-amplitude part of the outer
		bridge one has $\beta_\ve=b_1\to0$, hence $\phi=o(1)$ uniformly and
		the same conclusion follows directly.
		
		In a high-amplitude transition region,
		\[
		f''(U)
		\le
		C U^{2p-2}\lambda Ue^{U^p}.
		\]
		By \eqref{betaUbound},
		\[
		\beta_\ve^2U^{2p-2}
		\le\frac{C}{\ve^2},
		\]
		and therefore
		\[
		\chi(U)f''(U)\beta_\ve^2
		\le
		C\ve^{-2}{\mathcal D}^{\rm hi}_\ve
		\le C\varrho_\ve.
		\]
		On the low-amplitude part, which lies in the outer bridge,
		$0\le U\le2$, $\beta_\ve=b_1$, and
		\[
		f''(U)\le C U^{p-1}.
		\]
		Hence
		\[
		[1-\chi(U)]f''(U)\beta_\ve^2
		\le Cb_1^2
		\le Cb_1
		\le C\varrho_\ve.
		\]
		The exterior region is even easier because $f''(U)$ is uniformly bounded
		and $\beta_\ve=b_1\to0$.  This proves \eqref{fsecond-weight} and hence
		\eqref{quadN}.  Estimate \eqref{Nbound} follows from
		Proposition \ref{prop:error}, and \eqref{Nlipschitz} follows by the same
		Taylor argument.
	\end{proof}
	
	By Proposition \ref{p1}, Proposition \ref{prop:error}, and
	Lemma \ref{lem:nonlinear},
	\[
	\|A(\phi)\|_{**}
	\le
	C\ve+C_M\|\phi\|_{**}^2+o(1)\|\phi\|_{**}.
	\]
	Choosing $M$ large and then $\ve$ small, $A$ maps ${\mathcal B}_M$ into
	itself.  Moreover \eqref{Nlipschitz} shows that its Lipschitz constant on
	${\mathcal B}_M$ is
	\[
	O(\ve)+o(1)=o(1).
	\]
	Thus the contraction mapping theorem applies.
	
	\begin{proposition}\label{ls}
		For all sufficiently small $\ve$, problem
		\eqref{nl1}--\eqref{nl3} has a unique solution
		\[
		\phi=\phi(m)\in\mathcal C_{**}
		\]
		in the ball $\mathcal B_M$, for $M$ fixed sufficiently large.  It
		satisfies
		\begin{equation*}\label{phi-estimate}
			\|\phi(m)\|_{**}\le C\ve
		\end{equation*}
		uniformly for $m$ in \eqref{emme}, and the maps
		$m\mapsto\phi(m)$ and $m\mapsto c_j(m)$ are continuous.
	\end{proposition}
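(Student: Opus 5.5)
The proof will be a contraction-mapping argument for the fixed-point formulation $\phi=A(\phi):=T_{\ve,m}(-E-N(\phi))$ on the ball $\mathcal B_M$ in $\mathcal C_{**}$. By Proposition~\ref{p1}, this formulation is equivalent to \eqref{nl1}--\eqref{nl3}. The coefficients are then recovered as $c_j(m)$, the $c$-components of the linear solution with right-hand side $h=-E-N(\phi)$.

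\textbf{Self-map.} First, $h=-E-N(\phi)$ must be an admissible right-hand side. It is symmetric because $U$ and $\phi$ satisfy \eqref{simm}, \eqref{simmetria} and $f$ is even-compatible. It lies in $L^\infty$ because, for fixed $\ve$, $\phi$ is continuous and the weights are bounded. Combining \eqref{est}, \eqref{estRR} and \eqref{Nbound} gives, for $\phi\in\mathcal B_M$,
\[
\|A(\phi)\|_{**}\le C\|E\|_*+C\|N(\phi)\|_*\le C_0\ve+CC_M M^2\ve^2+o(1)M\ve .
\]
Here $C_0$ depends only on the parameter box, through the constants of Proposition~\ref{p1} and Proposition~\ref{prop:error}. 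Choose $M=2C_0+1$ first, and then take $\ve$ small. The last two terms are then at most $\tfrac12 M\ve$, so $A(\mathcal B_M)\subset\mathcal B_M$.

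\textbf{Contraction.} By \eqref{est} and \eqref{Nlipschitz},
\[
\|A(\phi_1)-A(\phi_2)\|_{**}\le C\big[2C_MM\ve+o(1)\big]\|\phi_1-\phi_2\|_{**}\le\tfrac12\|\phi_1-\phi_2\|_{**}.
\]
The space $\mathcal C_{**}$ is complete, since for fixed $\ve$ its norm is equivalent to the sup norm, and $\mathcal B_M$ is closed in it. Banach's theorem therefore gives a unique fixed point $\phi(m)$ in $\mathcal B_M$ with $\|\phi(m)\|_{**}\le M\ve$. By \eqref{est}, $|c_j|\le C(\|E\|_*+\|N(\phi)\|_*)=O(\ve)$. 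All constants are uniform in $m\in[\delta,\delta^{-1}]^k$ because every previous estimate is.

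\textbf{Continuity in $m$.} This is the main obstacle: $U$, $E$, $N$ and the norms themselves depend on $m$. For fixed $\ve$, however, all the norms for $m$ in the box are uniformly equivalent to the sup norm, so it suffices to work in $C(\overline\Omega)$. The map $m\mapsto(\gamma_j,\mu_j,d_j)$ is continuous. For $\gamma_1$ this follows from \eqref{def1} on the large branch. For $\mu_j$ it follows from the implicit function theorem in Lemma~\ref{lem:robust-scales}, which also yields $C^1$ dependence. Consequently $U$, $E$, $V_j$, $Z_j$, $\zeta_j$ vary continuously in $L^\infty$, and $\phi\mapsto N(\phi)$ is continuous jointly in $(m,\phi)$ on bounded sets, since $f$ is $C^1$. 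Proposition~\ref{p1} gives continuity of $T_{\ve,m}$ in $(m,h)$.

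The fixed point of a uniform contraction depending continuously on a parameter is continuous in that parameter. Concretely, write
\[
\|\phi(m)-\phi(m')\|\le \tfrac12\|\phi(m)-\phi(m')\|+\|A_m(\phi(m'))-A_{m'}(\phi(m'))\|.
\]
The last term tends to $0$ as $m\to m'$, so $\phi(m)\to\phi(m')$. Continuity of $c_j(m)$ then follows from the continuous dependence in Proposition~\ref{p1}, applied to $h=-E_m-N_m(\phi(m))$.
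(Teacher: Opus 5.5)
Your proposal is correct and is the same argument as the paper's: a contraction for $\phi\mapsto T_{\ve,m}(-E-N(\phi))$ on $\mathcal B_M$, using \eqref{est}, \eqref{estRR}, \eqref{Nbound} and \eqref{Nlipschitz}, with $M$ fixed first and then $\ve$ taken small. Your continuity argument makes explicit what the paper leaves to the continuous dependence in Proposition~\ref{p1}. The one point needing care is that the factor $\tfrac12$ holds in the $m$-dependent weighted norm, not in the sup norm. This is harmless: for fixed $\ve$ the weights at $m'$ and at $m$ have ratio tending to $1$, so $\phi(m')$ lies in $\mathcal B_{2M}$ for the norm at $m$ once $m'$ is close to $m$.
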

	
	Evaluating the projected problem at this solution, we obtain continuous
	coefficient functions
	\[
	c_j=c_j(m_1,\ldots,m_k),\qquad j=1,\ldots,k.
	\]
	Equation \eqref{main} is solved precisely when
	\begin{equation*}\label{sys}
		c_j(m)=0,\qquad j=1,\ldots,k.
	\end{equation*}
	The finite-dimensional analysis will be carried out directly from these
	coefficients, without differentiating a reduced energy.
	
	Define
	\begin{equation*}\label{psi1der}
		\psi_\ve(m_1)
		:=
		\log\frac8{\lambda p m_1}-1+\frac{h_\lambda(0)}2.
	\end{equation*}
	Thus $\psi_\ve$ depends on $\ve$ only through $p=2+\ve$ and
	\begin{equation*}\label{psi0}
		\psi_\ve\longrightarrow\psi_0,
		\qquad
		\psi_0(m_1)
		=
		\log\frac4{\lambda m_1}-1+\frac{h_\lambda(0)}2
	\end{equation*}
	uniformly on compact subsets of $(0,\infty)$.
	
	\medskip
	The projection system is anisotropic: the relative-height equations appear
	at order $\ve$, whereas the remaining scalar equation appears only after a
	telescopic cancellation, at order $G_1^{-1}\asymp\ve/|\log\ve|$. We
	therefore solve the projection equations directly rather than differentiate
	a reduced energy.
	
	\begin{proposition}[fast and slow projection equations]\label{prop:triangular}
		Put $m_{k+1}:=0$.  Uniformly for
		$m\in[\delta,\delta^{-1}]^k$, the projection coefficients satisfy
		\begin{equation}\label{fastcj}
			c_j(m)
			=
			-C_\zeta\,\ve
			\left(\frac12-m_j+m_{j+1}\right)
			+o(\ve),
			\qquad j=2,\ldots,k,
		\end{equation}
		and
		\begin{equation}\label{slowcj}
			\sum_{j=1}^k c_j(m)
			=
			-\frac{C_\zeta}{G_1}\,
			\psi_\ve(m_1)
			+o(G_1^{-1}),
			\qquad G_1=\gamma_1^p.
		\end{equation}
		The remainders are uniform in the parameter box.
	\end{proposition}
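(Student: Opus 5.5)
The coefficients are extracted by testing the projected equation \eqref{nl1} against the dilation modes. For each $j$, multiply \eqref{nl1} by $Z_j$ and integrate over $\Omega$. The kernel element $Z_j$ nearly solves the linearized problem, so the left side reduces to a small term. Since the supports of the $\zeta_i$ are disjoint for distinct scales, by Lemma~\ref{lem:robust-scales}, the right side gives
\[
b_jc_j\int V_jZ_j^2\zeta_j=b_jc_j\kappa_\zeta(1+o(1)).
\]
This leads to
\[
b_jc_j\kappa_\zeta=\int_\Omega (E+N(\phi))Z_j+\int_\Omega \phi\,L Z_j ,
\]
up to boundary terms; these are negligible because $Z_j\to1$ away from the core while $\phi=0$ on $\partial\Omega$. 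The term $\int \phi LZ_j$ is controlled as follows. We have $LZ_j=(\lambda+\sum_{i\ne j}V_i)Z_j$ exactly, since $\Delta Z_j+V_jZ_j=0$. Combined with $\|\phi\|_{**}\le C\ve$, this gives a bound of $o(\ve b_j)$, and of $o(b_j/G_1)$ after summation. The nonlinear term is $O(\ve^2 b_j)$ by Lemma~\ref{lem:nonlinear}. Thus everything reduces to computing $b_j^{-1}\int_\Omega E Z_j$.

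To compute $\int EZ_j$, I would use the core expansion of Lemma~\ref{lem:core-expansion}, but keep the next-order terms explicitly rather than only in the $O(\cdot)$. In the variable $x=d_jy$ with $t=\log|y|$, write
\[
E=\tfrac{b_j}{p}V_j\bigl[e^{Q_j(y)}(1+\cdots)-1\bigr]-\tfrac{b_j}{p}V_j^{\rm other}.
\]
Here the exponent $Q_j$ contains three pieces:
\begin{itemize}
\item $-4\Gamma_j\log|y|$ from the inner bubbles (the $j+1$ bubble gives $(\gamma_j/\gamma_{j+1})^{p-1}\approx\ve m_{j+1}$);
\item the $\ve$-correction $\tfrac{\ve}{2}w$-type terms coming from $p=2+\ve$ in $G_j(a_j+\frac{w}{pG_j})^p$, with $a_j=1+O(\ve)$ carrying $-\eta_j/\gamma_j\approx \frac2p\ve m_j$;
\item the quadratic term $\frac{p-1}{2pG_j}w^2$.
\end{itemize}
Linearizing, $b_j^{-1}\int EZ_j$ becomes $\frac1p\int e^wZ\,[\text{linear combination of }\log|y|,\ w,\ \text{const}]$. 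The constant integrates to zero against $e^wZ$. The integrals $\int e^wZ\log|y|$ and $\int e^wZ w$ are explicit; the total mass $8\pi$ arises through $\int e^w Z\log|y|\,dy=\ldots$ via $\Delta$-integration by parts with $Z=\partial_\mu\omega_\mu$. This should produce the combination $\frac12-m_j+m_{j+1}$ at order $\ve$, giving \eqref{fastcj}; the normalization by $\kappa_\zeta$ yields $C_\zeta$. The long-range tails, weighted by $\Theta_{j,\ve}$ and $\ve^{-1}\chi$, need care: $Z\to1$ at infinity, so the tail contributions do not vanish automatically. I would handle them using the exact-solution identity $\int_\Omega(\Delta U+\lambda U)Z_j$ and Green's formula.

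For \eqref{slowcj} I would sum the identities over $j$. At order $\ve$ the coefficients $\frac12-m_j+m_{j+1}$, together with the $j=1$ analogue $\frac12-\frac{k}{2}\cdot(\ldots)$ dictated by \eqref{def1}, telescope to zero. This cancellation is exactly how $\gamma_1$ was chosen through $\gamma_1^pe^{-\ve kG_1/2}=m_1$. What survives are the $G_j^{-1}$-order terms: the quadratic $w^2$ term, $\log(\lambda pG_j)$, $h_\lambda(0)$ and $\log 8$ from $\mathcal B_j$, and $\log\mu_j^2=L_j$. After the $\ve$-cancellation, only the $j=1$ terms remain at order $G_1^{-1}$, since $G_j^{-1}\ll G_1^{-1}$ for $j\ge2$. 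Substituting $\log(\lambda p G_1)=\log(\lambda p m_1)+\frac{k\ve}{2}G_1$ then produces $\psi_\ve(m_1)$.

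I expect the main obstacle to be exactly this bookkeeping to relative precision $o(\ve/|\log\ve|)$. Every error from the cores, the bridges and the term $\int\phi LZ_j$ must be shown to be $o(b_jG_1^{-1})$ after summation, not merely $o(\ve b_j)$. In particular, $\phi$ itself has an $O(\ve)$ part whose projection is not obviously $o(G_1^{-1})$. I would first try to handle this by exploiting that the $O(\ve)$ part of $E$ is, to leading order, the same profile in each core, and testing instead against $\sum_j b_j^{-1}Z_j$ patched through the bridges. That test function is nearly in the kernel of $L$ globally, which makes $\int\phi\,L(\cdot)$ smaller.
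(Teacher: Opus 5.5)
\textbf{Verdict: there is a genuine gap.} Your local computation has the right ingredients: the core expansion with $\Gamma_j\approx\ve m_{j+1}$ and $-p\eta_j/\gamma_j\approx 2\ve m_j$, the retained $w^2$ term, the Liouville integrals, and the identity $\frac{k\ve G_1}{2}=\log(G_1/m_1)$. The gap is in how you extract $c_j$ from the projected equation. It is exactly the obstacle you flag at the end but leave unresolved.

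You test \eqref{nl1} against the global function $Z_j$, which is $\approx1$ for $|x|\gg d_j$. You then claim that the boundary terms are negligible and that $\int_\Omega\phi\,LZ_j=o(\ve b_j)$. Neither claim holds.
\begin{itemize}
\item The condition $\phi=0$ on $\partial\Omega$ removes only $\int_{\partial\Omega}\phi\,\partial_\nu Z_j$. The flux $\int_{\partial\Omega}Z_j\,\partial_\nu\phi\approx\int_{\partial\Omega}\partial_\nu\phi$ is only $O(\ve b_1)$. The fact that $Z_j\to1$ there is what prevents it from being small.
\item The terms $\lambda\int_\Omega\phi Z_j$ and $\int_\Omega\phi V_iZ_j$ with $i<j$ are only $O(\ve b_1)$ and $O(\ve b_i)$, since $\phi$ is known only up to $O(\ve\beta_\ve)$.
\item The same problem affects $\int(E+N(\phi))Z_j$. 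On the outer cores, $\frac{b_i}{p}V_i(Q_i-1)$ is tested against $Z_j\approx1$ rather than against $Z_i$. In the exterior, $\lambda U^{p+1}$ alone contributes about $b_1^{p+1}\approx b_1G_1^{-1}$.
\item The off-diagonal terms $b_ic_i\int V_iZ_i\zeta_iZ_j$ need not vanish, because $Z_j\approx\pm1$ on $\supp\zeta_i$. Disjointness of the supports of the $\zeta_i$ does not help.
\end{itemize}
After division by $b_j$, these contributions are only bounded by quantities like $\ve b_1/b_j$ or $\ve b_i/b_j$. These are not $o(\ve)$ when $j\ge2$, since $b_i/b_j\to\infty$. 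When $j=1$ they are $O(\ve)$ rather than $o(G_1^{-1})$. In the exact identity these terms must cancel. Exhibiting that cancellation, however, requires knowing $\phi$ far from the $j$-th core, which the linear theory does not provide. The patched function $\sum_j b_j^{-1}Z_j$ and the appeal to Green's formula for the tails do not supply it.

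\textbf{The missing idea: a slow logarithmic cutoff.} The paper tests against $Z_j\chi_{j,\ve}$, with $\chi_{j,\ve}=\vartheta_\ve(t)$ and $t=\log(|x|/d_j)$. Here $\vartheta_\ve=1$ for $|t|\le2|\log\ve|$ and $\vartheta_\ve=0$ for $|t|\ge2|\log\ve|+S_\ve$. The transition length is $S_\ve=\frac12|\log\ve|^2$, and $|\vartheta_\ve''|\le CS_\ve^{-2}$. On this support the following hold:
\begin{itemize}
\item $\beta_\ve\asymp b_j$;
\item the other $\zeta_i$-terms vanish identically;
\item the cross potentials and $\lambda U$ are negligible.
\end{itemize}
Since $\Delta\chi_{j,\ve}=|x|^{-2}\vartheta_\ve''(t)$,
\[
b_j^{-1}\int|\phi Z_j\Delta\chi_{j,\ve}|\le C\ve\int|\vartheta_\ve''|\,dt\le\frac{C\ve}{S_\ve}=o(G_1^{-1}).
\]
The gradient term is $O(\ve^5)$, because $\partial_tZ_j=\operatorname{sech}^2t=O(\ve^4)$ for $|t|\ge2|\log\ve|$. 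This yields $c_j=(p\kappa_\zeta)^{-1}\Pi_j^\ve+o(G_1^{-1})$ for each $j$ separately, so no global test function is needed. A cutoff of fixed logarithmic width would leave an $O(\ve)$ error, which is not enough even for \eqref{fastcj}.

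Lemma~\ref{lem:nonlinear} bounds the $q_\ve\phi$ part of $N(\phi)$ only by $o(\ve)$, not $O(\ve^2)$. On the localized support you need the quantitative bound \eqref{core-fprime}.

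Finally, your slow-equation bookkeeping uses \eqref{def1} twice. The order-$\ve$ terms do not telescope to zero. The $j=1$ projection contains $-8\pi\ve(\frac12+m_2)$, and summing over $j$ leaves $-4\pi k\ve$. This residue has size $\log G_1/G_1$. It is what cancels the $2\log G_1$ in $C_1$ once $\frac{k\ve G_1}{2}=\log(G_1/m_1)$ is substituted.
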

	
	\begin{proof}[Proof of Theorem \ref{teo}]

		Equations \eqref{fastcj}--\eqref{slowcj} have
		$k-1$ fast directions and one slow direction.  The slow equation is not
		$c_1=0$ by itself: the order-$\ve$ parts cancel only after taking the
		combination $c_1+\cdots+c_k$.
		
		Proposition \ref{prop:triangular} is proved in Section
		\ref{sec:directprojection}.  Assuming it for the moment, define
		\[
		{\mathcal F}_\ve(m)
		:=
		\left(
		-\frac{G_1}{C_\zeta}\sum_{j=1}^k c_j,
		-\frac{1}{C_\zeta\ve}c_2,\ldots,
		-\frac{1}{C_\zeta\ve}c_k
		\right).
		\]
		Then, uniformly on compact subsets of the parameter box,
		\[
		{\mathcal F}_\ve(m)\longrightarrow
		{\mathcal F}_0(m)
		:=
		\left(
		\psi_0(m_1),
		\frac12-m_2+m_3,\ldots,
		\frac12-m_{k-1}+m_k,
		\frac12-m_k
		\right).
		\]
		When $k=1$, only the first component is present; the convention $m_{k+1}=0$ is then vacuous.
		
		The limiting system is triangular. If $k\ge2$, starting from the last equation gives
		\[
		m_k^0=\frac12,
		\qquad
		m_j^0=m_{j+1}^0+\frac12
		=\frac{k-j+1}{2},
		\quad j=2,\ldots,k-1.
		\]
		The first equation gives
		\[
		m_1^0=\frac4\lambda e^{h_\lambda(0)/2-1}.
		\]
		We take the fixed constant $\delta$ in \eqref{emme} sufficiently small so
		that every component of $m^0$ lies in the interior of
		$[\delta,\delta^{-1}]^k$. The Jacobian $D{\mathcal F}_0(m^0)$ is triangular with nonzero diagonal.
		Hence the Brouwer degree of ${\mathcal F}_0$ in a sufficiently small
		product neighborhood of $m^0$ is nonzero.  By uniform convergence, the same
		holds for ${\mathcal F}_\ve$ for all small $\ve$.  We obtain
		$m^\ve\to m^0$ such that
		\[
		c_2(m^\ve)=\cdots=c_k(m^\ve)=0,
		\qquad
		\sum_{j=1}^k c_j(m^\ve)=0.
		\]
		Therefore also $c_1(m^\ve)=0$, and
		\[
		u_\ve=U(m^\ve)+\phi(m^\ve)
		\]
		solves the extended equation
		\[
		-\Delta u_\ve=\lambda u_\ve e^{(u_\ve^+)^p},
		\qquad u_\ve=0\quad\hbox{on }\partial\Omega.
		\]
		Let $u_\ve^-:=\max\{-u_\ve,0\}$.  Testing by $u_\ve^-$ gives
		\[
		\int_\Omega|\nabla u_\ve^-|^2
		=
		\lambda\int_\Omega (u_\ve^-)^2.
		\]
		Since $\lambda<\lambda_1(\Omega)$, we conclude that $u_\ve^-=0$.
		The solution is nontrivial by the core expansion, and the strong maximum
		principle gives $u_\ve>0$ in $\Omega$.  Hence it solves exactly
		\eqref{main}.
		
		Moreover, \eqref{slowcj} implies
		\[
		\psi_\ve(m_1^\ve)=o(1),
		\]
		and therefore the more precise relation
		\begin{equation}\label{m1-refined}
			m_1^\ve
			=
			\frac8{\lambda p}e^{h_\lambda(0)/2-1}+o(1).
		\end{equation}
		
		Finally, let
		\[
		x_\ve:=\frac{k\ve G_1}{2}.
		\]
		Since $m_1=G_1e^{-x_\ve}$, we have
		\[
		x_\ve e^{-x_\ve}=\frac{k\ve m_1}{2}.
		\]
		By the large-branch estimate following \eqref{def1},
		\[
		x_\ve
		=
		|\log\ve|+\log|\log\ve|+O(1).
		\]
		Consequently
		\[
		G_1
		=
		\frac{2}{k\ve}|\log\ve|\,(1+o(1)),
		\]
		and therefore
		\[
		\gamma_1
		=
		\sqrt{\frac2k}\,
		|\log\ve|^{1/2}\ve^{-1/2}(1+o(1)).
		\]
		For $j\ge2$, using
		\[
		\gamma_j
		=
		\frac{\gamma_1}{\prod_{i=2}^jm_i}\,\ve^{-(j-1)}
		\]
		and $m_i\to m_i^0=(k-i+1)/2$, we obtain
		\[
		\gamma_j
		=
		\alpha_j|\log\ve|^{1/2}\ve^{1/2-j}(1+o(1)),
		\qquad
		\alpha_j
		=
		\frac{2^{j-1}(k-j)!}{(k-1)!}\sqrt{\frac2k}.
		\]
		Finally set
		\[
		\widehat\gamma_j:=a_j\gamma_j.
		\]
		The matching lemma gives $a_j=1+O(\ve)$, hence
		$\widehat\gamma_j/\gamma_j\to1$.  For fixed $y$ away from the origin,
		\eqref{local-U-exact} and Proposition \ref{ls} yield
		\begin{align*}
			p\gamma_j^{p-1}
			\bigl[u_\ve(d_jy)-\widehat\gamma_j\bigr]
			&=
			w(y)-4\Gamma_j\log|y|
			+pG_jr_{j,\ve}(y)
			+p\gamma_j^{p-1}\phi(d_jy)\\
			&=w(y)+o(1),
		\end{align*}
		uniformly on compact subsets of $\mathbb R^2\setminus\{0\}$, since
		$\Gamma_j=O(\ve)$, $G_jr_{j,\ve}=o(1)$ and
		$\|\phi\|_{**}=O(\ve)$.  This proves the profile statement and completes
		Theorem \ref{teo}.

	\end{proof}
	
	\medskip
	\medskip
	The remainder of the paper is devoted to the linear theory and to the direct expansion of the projection equations used above.

	\section{The projected linear problem}\label{seclinear}
	
	We prove Proposition \ref{p1}.  Near each concentration scale we use
	nondegeneracy of the Liouville bubble.  The estimates are then propagated
	through the regions between consecutive scales by the bridge barriers
	constructed in Section~\ref{sec2}.
	
	\begin{lemma}[a priori estimate]\label{lemma1}
		There exist $C>0$ and $\ve_0>0$ such that every solution of
		\eqref{l01}--\eqref{l3} satisfies
		\begin{equation}\label{estt}
			\|\phi\|_{**}+\max_{1\le j\le k}|c_j|
			\le C\|h\|_*
		\end{equation}
		for $0<\ve<\ve_0$.
	\end{lemma}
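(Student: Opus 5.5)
The plan is a standard contradiction/blow-up argument, combined with the bridge barriers of Lemma~\ref{lem:affine-bridge}. First we estimate the coefficients $c_j$. Test \eqref{l01} against $Z_i\zeta_i$ and integrate by parts; the boundary terms vanish because $\zeta_i$ has compact support. Then
\[
\int_\Omega\phi\,L(Z_i\zeta_i)=\int_\Omega hZ_i\zeta_i+\sum_j b_jc_j\int_\Omega V_jZ_jZ_i\zeta_j\zeta_i .
\]
By scale separation the matrix on the right is diagonal up to superalgebraically small terms, with diagonal entries $b_i\kappa_\zeta$. On the left, $\mathcal L_0Z=0$, so $L(Z_i\zeta_i)$ is supported where $\nabla\zeta_i\ne0$, up to terms coming from $\lambda$ and the other $V_j$. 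There it is $O(d_i^{-2})$ in the annulus $|x|\sim R_0d_i$, where $|Z_i-1|\le CR_0^{-2}$. Integrating gives $|\int\phi L(Z_i\zeta_i)|\le o_{R_0}(1)\,b_i\|\phi\|_{**}+o_\ve(1)b_i\|\phi\|_{**}$, using $\beta_\ve\asymp b_i$ on the $i$-th core. For the forcing term, $|\int hZ_i\zeta_i|\le C\|h\|_*\int_{\supp\zeta_i}\varrho_\ve\le Cb_i\|h\|_*$, since on the core $\varrho_\ve\approx b_iV_i\Theta_{i,\ve}$ with bounded $\Theta$ for $|t|\le\log 2R_0$. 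This yields
\[
|c_i|\le C\|h\|_*+o(1)\|\phi\|_{**}.
\]
It therefore suffices to prove $\|\phi\|_{**}\le C\|h\|_*$ for solutions of \eqref{l01}--\eqref{l3}, with the new right-hand side $\tilde h=h+\sum b_jc_jV_jZ_j\zeta_j$ satisfying $\|\tilde h\|_*\le C\|h\|_*+o(1)\|\phi\|_{**}$.

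Next we argue by contradiction. Suppose there are $\ve_n\to0$, parameters $m_n$ in the box, and solutions with $\|\phi_n\|_{**}=1$ and $\|h_n\|_*\to0$, hence $\|\tilde h_n\|_*\to0$ and $c_{j,n}\to0$. \textbf{Core step:} for each $j$, set $\tilde\phi_n(y)=b_j^{-1}\phi_n(d_jy)$. This is bounded on compact subsets of $\mathbb R^2$ with slowly growing logarithmic control, because $\beta_\ve$ is affine in $s$ with slope $o(b_j)$ per unit log. The rescaled right-hand side $d_j^2b_j^{-1}\tilde h_n$ is bounded by $o(1)e^{w}\Theta$, so elliptic estimates give local convergence to a bounded symmetric solution of $\mathcal L_0\psi=0$. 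By the nondegeneracy result of \cite{bp} and the symmetry \eqref{simmetria}, $\psi=cZ$. The orthogonality \eqref{l3} passes to the limit and gives $\int e^wZ^2\zeta=0$, so $c=0$. Hence $\sup_{|x|\le Rd_j,\ |x|\ge R^{-1}d_j}|\phi_n|/\beta_\ve\to0$ for every fixed $R$. Near the origin, inside the innermost core, a similar argument on $B(0,Rd_k)$ applies, including $y=0$.

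\textbf{Bridge step, the expected main obstacle:} the smallness must be propagated across the bridges $I_j^R$ and the outer region. On each radial bridge annulus $D$, use $\pm\phi_n$ compared with the barrier
\[
\psi_\pm=\theta_n\,{\mathcal S}_j+\|\tilde h_n\|_*\,{\mathcal S}_j\mp\phi_n ,
\]
where $\theta_n\to0$ bounds $|\phi_n|/\beta_\ve$ on $\partial D$. This bound holds on the inner boundaries by the core step, and on $\partial\Omega$ because $\phi_n=0$ there. By \eqref{Ssuper}, $L{\mathcal S}_j\le-\varrho_\ve$, so $L\psi_\pm\le-\|\tilde h_n\|_*\varrho_\ve\mp L\phi_n\le0$. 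The maximum principle recorded after Lemma~\ref{lem:affine-bridge} then gives $|\phi_n|\le(\theta_n+\|\tilde h_n\|_*)(1+\widehat\eta_R)\beta_\ve$ on $D$.

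The delicate points are twofold. First, ${\mathcal S}_j$ is radial and defined on annuli, so the outer bridge must be combined with the region $\Omega\setminus B(0,r_0)$. There $L=\Delta+\lambda+o(1)$, which is coercive because $\lambda<\lambda_1$, so one should use a positive supersolution $e.g.$ the first-eigenfunction-type function of a slightly larger domain, glued to $b_1$. Second, $R$ must be chosen so that the bridge endpoints $\log R$ lie inside the regions already controlled by the core step. Since the core step holds for every fixed $R$, we fix $R$ as in Lemma~\ref{lem:affine-bridge} first.

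Together these steps give $\|\phi_n\|_{**}\to0$, which contradicts $\|\phi_n\|_{**}=1$. Combined with the estimate on the $c_j$, this proves \eqref{estt}.
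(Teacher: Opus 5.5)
\textbf{The gap is in your first step}, the bound $|c_i|\le C\|h\|_*+o(1)\|\phi\|_{**}$. Your test function is built on the dilation mode, which does not decay: $Z_i\to1$ for $|x|\gg d_i$ and $Z_i\to-1$ for $|x|\ll d_i$. After setting $x=d_iy$ and $\hat\phi=b_i^{-1}\phi(d_i\,\cdot)$, the cutoff contribution is $b_i\int\hat\phi\,(Z\Delta\zeta+2\nabla Z\cdot\nabla\zeta)\,dy$. Here $\zeta$ switches off over two dyadic annuli, $R_0\le|y|\le2R_0$ and $(2R_0)^{-1}\le|y|\le R_0^{-1}$; you omitted the inner one. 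On each of them $\int|\Delta\zeta|\,dy\ge2\pi$, whatever $R_0$ is, because $\int_{B_\rho}\Delta\zeta=2\pi\rho\,\zeta'(\rho)$. So a pointwise bound $O((R_0d_i)^{-2})$ on an annulus of area $\sim(R_0d_i)^2$ gives $O(b_i\|\phi\|_{**})$, not $o_{R_0}(1)\,b_i\|\phi\|_{**}$.

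The factor $|Z_i-1|\le CR_0^{-2}$ only lets you replace $\int\phi Z_i\Delta\zeta_i$ by $\int\phi\Delta\zeta_i$. That integral is small only if $\phi$ has small oscillation on the transition annuli, and $|\phi|\le\|\phi\|_{**}\beta_\ve$ does not provide this. Your computation is the one that works for translation modes, which decay like $|y|^{-1}$. Moreover $R_0$ is fixed, so an $o_{R_0}(1)$ term would not give $c_{i,n}\to0$ anyway. What you actually get is $|c_i|\le C(\|h\|_*+\|\phi\|_{**})$. Along the contradiction sequence $c_{i,n}\to\bar c_i$ with $\bar c_i$ possibly nonzero, and the core limit is $\Delta\psi+e^w\psi=\bar c_i\,e^wZ\zeta$, not $\mathcal L_0\psi=0$. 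Nondegeneracy cannot be invoked until $\bar c_i=0$ is proved.

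This is exactly the extra idea in the paper. It normalizes $\|\phi\|_{**}+\max_j|c_j|=1$ and keeps $\bar c_i$ in the limit. It then integrates the Wronskian identity $[r(Z\bar\psi'-Z'\bar\psi)]'=\bar c_i\,re^wZ^2\zeta$ for the radial average $\bar\psi$ over $(0,\infty)$. Both boundary terms vanish because $\psi$ is bounded and the second radial solution grows like $\log r$. Hence $\bar c_i\int_0^\infty re^wZ^2\zeta\,dr=0$, so $\bar c_i=0$. After that your nondegeneracy, symmetry and orthogonality argument applies, and it also gives $c_i\to0$.

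Alternatively, you can repair your direct route. Test against $Z_i\,\vartheta(\log(|x|/d_i))$, where $\vartheta=1$ on the support of $\zeta$ and decays over a logarithmic interval of length $S\gg1$, as with the cutoff $\chi_{j,\ve}$ the paper uses for the projection equations. Then $\int|\Delta\chi|\,dx=2\pi\int|\vartheta''|\,dt=O(S^{-1})$ and the gradient cross term is negligible. This genuinely yields $|c_i|\le C\|h\|_*+o(1)\|\phi\|_{**}$.

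The rest of your plan is consistent with the paper, with two remarks. First, for $i<k$ you need the mass bound $\int_{|y|\le R}d_i^2V_\ell(d_iy)|\hat\phi|\,dy\le Cb_\ell/b_i\to0$ for $\ell>i$, to remove the puncture at $y=0$. Second, your glued exterior barrier is only sketched: gluing supersolutions requires a minimum with the correct ordering on the overlap. The paper avoids this by passing to the limit in $b_1^{-1}\phi$ away from the origin. That limit is a bounded solution of $\Delta\phi+\lambda\phi=0$ in $\Omega\setminus\{0\}$ vanishing on $\partial\Omega$, hence zero since $\lambda<\lambda_1(\Omega)$.
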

	
	\begin{proof}
		We argue by contradiction.  If \eqref{estt} fails, after division by
		\[
		M_n
		:=
		\|\phi_n\|_{**}
		+\max_{1\le j\le k}|c_{jn}|
		\]
		we may assume
		\begin{equation}\label{linear-normalization}
			\|\phi_n\|_{**}
			+\max_j|c_{jn}|
			=1,
			\qquad
			\|h_n\|_*\longrightarrow0.
		\end{equation}
		For readability we suppress the subscript $n$ below.
		
		The contradiction argument has four parts. First we show that the
		projection coefficients remain bounded. We then rescale around each core;
		the limiting equation still contains the possible limit of the
		corresponding coefficient $c_i$. A Wronskian identity for the radial
		average forces that limit to vanish, and the Liouville nondegeneracy then
		eliminates the core limit of $\phi$. Away from the origin the limiting
		operator is $\Delta+\lambda$, so the assumption
		$\lambda<\lambda_1(\Omega)$ eliminates the exterior limit. Finally the
		bridge supersolutions propagate the vanishing from the cores and the
		exterior through the whole domain.
		
		\smallskip
		\noindent
		\emph{Step 1: uniform boundedness of the projection coefficients.}
		Choose a fixed radial cutoff $\widehat\zeta$ such that
		\[
		\widehat\zeta=1\quad\hbox{on }\supp\zeta,
		\qquad
		\supp\widehat\zeta
		\subset\{(4R_0)^{-1}<|y|<4R_0\},
		\]
		and put
		\[
		\psi_i(x):=Z_i(x)\widehat\zeta(x/d_i).
		\]
		Testing \eqref{l01} against $\psi_i$ and dividing by $b_i$ gives
		\begin{equation}\label{matrix-c}
			\sum_{j=1}^kK_{ij}^\ve c_j=r_i^\ve,
		\end{equation}
		where
		\[
		K_{ij}^\ve
		=
		\frac{b_j}{b_i}
		\int_\Omega V_jZ_j\zeta_j\psi_i.
		\]
		For all sufficiently small $\ve$, the supports at different scales are
		disjoint, so
		\[
		K_{ij}^\ve=0,\qquad i\ne j,
		\]
		whereas
		\[
		K_{ii}^\ve
		=
		\int_{\mathbb R^2}e^wZ^2\zeta
		=
		\kappa_\zeta>0.
		\]
		On the fixed rescaled support of $\psi_i$, the normalization
		\eqref{linear-normalization} gives $|\phi|\le Cb_i$.  Since the
		derivatives of $\psi_i$ have the natural sizes $d_i^{-1}$ and $d_i^{-2}$,
		self-adjointness and \eqref{superalgebraic-separation} yield
		\[
		\frac1{b_i}\left|\int_\Omega L\phi\,\psi_i\right|\le C,
		\qquad
		\frac1{b_i}\left|\int_\Omega h\,\psi_i\right|
		\le C\|h\|_*=o(1).
		\]
		Thus \eqref{matrix-c} implies
		\begin{equation}\label{cibounded}
			|c_i|\le C,\qquad i=1,\ldots,k.
		\end{equation}
		
		\smallskip
		\noindent
		\emph{Step 2: simultaneous vanishing of the coefficients and of the core
			limits.}
		Fix $i$.  Passing to a subsequence, assume
		\[
		c_i\longrightarrow\bar c_i.
		\]
		Set
		\[
		\widehat\phi_i(y)
		:=
		b_i^{-1}\phi(d_iy).
		\]
		The norm \eqref{starnorm2} makes $\widehat\phi_i$ uniformly bounded on
		compact subsets of $\mathbb R^2$. The smaller bubbles do not leave a
		singular source at $y=0$: if $\ell>i$, then
		\[
		\int_{|y|\le R} d_i^2V_\ell(d_i y)
		|\widehat\phi_i(y)|\,dy
		\le C\frac{b_\ell}{b_i}\longrightarrow0.
		\]
		Thus the apparent puncture is removable in the limiting equation.
		Proposition \ref{prop:error}, \eqref{cibounded}, and the scale separation
		then imply that a subsequence converges locally to a bounded symmetric
		solution of
		\begin{equation}\label{limit-projected}
			\Delta\widehat\phi+e^w\widehat\phi
			=
			\bar c_i\,e^wZ\zeta
			\quad\hbox{in }\mathbb R^2.
		\end{equation}
		
		We next prove that $\bar c_i=0$. This conclusion cannot be obtained by
		applying nondegeneracy immediately, because the limiting equation is still
		forced in the dilation direction. The radial average separates this forcing
		from the non-radial modes. Let
		\[
		\overline\phi(r)
		:=
		\frac1{2\pi}\int_0^{2\pi}
		\widehat\phi(r,\theta)\,d\theta.
		\]
		Since the right hand side of \eqref{limit-projected} is radial,
		$\overline\phi$ satisfies
		\[
		\overline\phi''+\frac1r\overline\phi'
		+e^w\overline\phi
		=
		\bar c_i e^wZ\zeta.
		\]
		The radial kernel function
		\[
		Z(r)=\frac{r^2-1}{r^2+1}
		\]
		satisfies the corresponding homogeneous equation.  Hence the Wronskian
		identity is
		\begin{equation}\label{Wronskian}
			\left[
			r\left(
			Z\overline\phi'-Z'\overline\phi
			\right)
			\right]'
			=
			\bar c_i\,r e^wZ^2\zeta.
		\end{equation}
		Near $r=0$ both functions are bounded and the left hand side has zero
		boundary Wronskian.  Outside the support of $\zeta$,
		$\overline\phi$ solves the homogeneous radial equation.  Its second
		independent radial solution grows like $\log r$ at infinity; boundedness
		therefore forces $\overline\phi$ to be a multiple of $Z$ for large $r$,
		and the Wronskian again vanishes.  Integrating
		\eqref{Wronskian} from $0$ to $\infty$ gives
		\[
		\bar c_i
		\int_0^\infty r e^wZ^2\zeta\,dr
		=0.
		\]
		The integral is strictly positive, hence
		\begin{equation*}\label{ci-limit-zero}
			\bar c_i=0.
		\end{equation*}
		
		Equation \eqref{limit-projected} is now homogeneous.  By the
		nondegeneracy of the Liouville bubble, every bounded solution is a linear
		combination of the dilation and translation modes.  The two coordinate
		reflections eliminate the translation modes.  Finally, the orthogonality
		condition \eqref{l3} passes to the limit and gives
		\[
		\int_{\mathbb R^2}e^wZ\zeta\,\widehat\phi=0,
		\]
		so the dilation coefficient also vanishes.  Thus
		$\widehat\phi\equiv0$.
		
		Since every convergent subsequence of $c_i$ has zero limit,
		\begin{equation}\label{cjzero}
			c_i\longrightarrow0,
			\qquad i=1,\ldots,k.
		\end{equation}
		Likewise, for every fixed $R>1$,
		\begin{equation}\label{corezero}
			\max_{j<k}
			b_j^{-1}\|\phi\|_{L^\infty(d_j/R<|x|<Rd_j)}
			+
			b_k^{-1}\|\phi\|_{L^\infty(|x|<Rd_k)}
			\longrightarrow0.
		\end{equation}
		
		\smallskip
		\noindent
		\emph{Step 3: the exterior region.}
		On compact subsets of $\overline\Omega\setminus\{0\}$ the bubble
		potentials tend to zero.  Therefore a subsequence of $b_1^{-1}\phi$
		converges locally to a bounded solution of
		\[
		\Delta\phi+\lambda\phi=0
		\quad\hbox{in }\Omega\setminus\{0\},
		\qquad
		\phi=0\quad\hbox{on }\partial\Omega.
		\]
		The singularity at the origin is removable.  Since
		$0<\lambda<\lambda_1(\Omega)$, the only solution is zero.  Thus
		\begin{equation}\label{exteriorzero}
			b_1^{-1}\|\phi\|_{L^\infty(\Omega\setminus B(0,r_0))}
			\longrightarrow0.
		\end{equation}
		
		It remains to propagate these estimates through the long annuli between
		consecutive scales.
	\end{proof}
	
	\begin{lemma}[propagation across the logarithmic bridges]
		\label{lem:two-scale-barrier}
		Fix $R>1$ sufficiently large and, if necessary, decrease the fixed radius $r_0>0$.  There exists $C$, independent of $\ve$, such that every solution
		of \eqref{l01}--\eqref{l3} satisfies
		\begin{equation}\label{bridgeestimate}
			\|\phi\|_{**}
			\le
			C\left[
			\max_{j<k} b_j^{-1}
			\|\phi\|_{L^\infty(d_j/R<|x|<Rd_j)}
			+b_k^{-1}\|\phi\|_{L^\infty(|x|<Rd_k)}
			+b_1^{-1}\|\phi\|_{L^\infty(\Omega\setminus B(0,r_0))}
			+\|h\|_*
			\right].
		\end{equation}
	\end{lemma}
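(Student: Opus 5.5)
The estimate is a comparison argument on each logarithmic bridge, using the positive supersolutions ${\mathcal S}_j=\beta_\ve+P_j$ of Lemma~\ref{lem:affine-bridge}. Write
\[
\mathcal M:=\max_{j<k} b_j^{-1}\|\phi\|_{L^\infty(d_j/R<|x|<Rd_j)}+b_k^{-1}\|\phi\|_{L^\infty(|x|<Rd_k)}+b_1^{-1}\|\phi\|_{L^\infty(\Omega\setminus B(0,r_0))}.
\]
The domain splits into four kinds of pieces:
\begin{enumerate}
\item the core annuli $\{d_j/R<|x|<Rd_j\}$ and the inner disc $\{|x|<Rd_k\}$;
\item the exterior $\Omega\setminus B(0,r_0)$;
\item the inter-bubble bridges $D_j=\{Rd_j<|x|<d_{j-1}/R\}$, $j=2,\ldots,k$, which correspond to $I_j^R$ in the variable $s$;
\item the outer bridge $D_1=\{Rd_1<|x|<r_0\}$.
\end{enumerate}
On the first two kinds of pieces $\beta_\ve$ is comparable to $b_j$ (respectively to $b_1$), so $|\phi|\le C\mathcal M\beta_\ve$ there by definition. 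It remains to bound $\phi$ on each bridge.

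On a bridge $D_j$ I take the barrier
\[
\Psi:=K\,{\mathcal S}_j,\qquad K:=C_0(\mathcal M+\|h\|_*+\max_i|c_i|),
\]
and compare it with $\pm\phi$. The right-hand side of \eqref{l01} on $D_j$ is $h$ plus the projection terms $b_ic_iV_iZ_i\zeta_i$. For $R\ge 2R_0$ the supports of the $\zeta_i$ lie inside the cores, so the projection terms vanish on $D_j$ and $L\phi=h$ there, with $|h|\le\|h\|_*\varrho_\ve$. By \eqref{Ssuper}, $L{\mathcal S}_j\le-\varrho_\ve$, hence
\[
L(\Psi\mp\phi)\le -K\varrho_\ve+\|h\|_*\varrho_\ve\le0
\]
as soon as $C_0\ge1$. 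On $\partial D_j$, \eqref{Pbound} gives ${\mathcal S}_j\ge\beta_\ve\ge cb_{j-1}$ or $cb_j$ (by \eqref{beta-endpoints}), while $|\phi|\le\mathcal M b_{j-1}$ or $\mathcal M b_j$. So $\Psi\mp\phi\ge0$ on $\partial D_j$ once $C_0$ is large. The maximum principle recorded at the end of the proof of Lemma~\ref{lem:affine-bridge} then gives $|\phi|\le K{\mathcal S}_j\le 2K\beta_\ve$ on $D_j$.

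One point needs care. That maximum principle is proved for $L$ acting on nonradial functions, while the supersolution is radial. The quotient argument still works, because $\Delta+{\mathcal V}_\ve$ has radial coefficients and ${\mathcal S}_j>0$ is a genuine two-dimensional supersolution. Also, \eqref{Ssuper} was built with the radial majorant $\overline\varrho_\ve$, so it dominates $\varrho_\ve(x)$ for every direction $\omega$. The outer bridge is handled the same way with $\beta_\ve\equiv b_1$. This is where the smallness of $r_0$ enters, through \eqref{eta-outer}: it guarantees $\widehat\eta_R<1/2$. If that requires shrinking $r_0$ or enlarging $R$, the first two kinds of pieces only change by fixed constants.

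Combining the bounds on all pieces gives
\[
\|\phi\|_{**}\le C\bigl(\mathcal M+\|h\|_*+\max_i|c_i|\bigr).
\]
To remove $\max_i|c_i|$, test \eqref{l01} against $\psi_i$ as in Step 1 of Lemma~\ref{lemma1}. This gives $|c_i|\le C(\mathcal M+\|h\|_*)$, because $\int L\phi\,\psi_i$ only involves $\phi$ on the core support, where $|\phi|\le\mathcal M b_i$. That yields \eqref{bridgeestimate}.

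I expect the main obstacle to be the boundary matching at the bridge endpoints. The constant must not deteriorate with the huge ratio $b_{j-1}/b_j$, and this is exactly why the barrier must interpolate affinely in $s$ rather than being constant. A second point to check is that the smoothing layers of $\chi_{\rm tr}$ and of the cutoffs $\zeta_i$ are covered by either the core pieces or the bridge pieces for the chosen $R$.
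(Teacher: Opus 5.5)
Your argument is correct and is essentially the paper's: the supersolutions $\mathcal S_j=\beta_\ve+P_j$ of Lemma~\ref{lem:affine-bridge} serve as barriers on each bridge, where the projection terms vanish once $R$ exceeds the support of $\zeta$. The endpoint data are controlled by the core and exterior terms, the outer annulus is treated the same way, and the exterior is trivial because $\beta_\ve=b_1$ there. The only difference is that you carry $\max_i|c_i|$ in the barrier constant and then remove it by testing against $\psi_i$; that step is valid but unnecessary, since, as you note yourself, $c_i$ never enters the equation on the bridges.
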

	
	\begin{proof}
		Choose $R$ larger than the support radius of all the cutoff functions
		$\zeta_j$.  Hence the projection terms in \eqref{l01} vanish on every
		bridge
		\[
		Rd_j<|x|<d_{j-1}/R,
		\qquad j=2,\ldots,k.
		\]
		Let ${\mathcal S}_j=\beta_\ve+P_j$ be the supersolution given by
		Lemma \ref{lem:affine-bridge}.  It satisfies
		\[
		\beta_\ve\le{\mathcal S}_j\le\frac32\beta_\ve,
		\qquad
		L{\mathcal S}_j\le-\varrho_\ve.
		\]
		Since ${\mathcal S}_j>0$ and $L{\mathcal S}_j<0$, the operator $L$
		satisfies the maximum principle on the bridge. Indeed, if $v/{\mathcal S}_j$ had a negative interior minimum at $x_0$,
		then $\nabla(v/{\mathcal S}_j)(x_0)=0$ and
		$\Delta(v/{\mathcal S}_j)(x_0)\ge0$; expanding $L v$ at $x_0$ and using
		$L{\mathcal S}_j<0$ gives $L v(x_0)>0$, a contradiction. Thus the
		comparison principle
		is available below even though the zeroth-order coefficient of $L$ is
		positive.
		If
		\[
		{\mathcal M}_j
		=
		\max\left\{
		\frac{\|\phi\|_{L^\infty(|x|=d_{j-1}/R)}}
		{\beta_\ve(A_j)},
		\frac{\|\phi\|_{L^\infty(|x|=Rd_j)}}
		{\beta_\ve(B_j)}
		\right\},
		\]
		then, since $|h|\le\|h\|_*\varrho_\ve$,
		\[
		L\left[
		({\mathcal M}_j+\|h\|_*){\mathcal S}_j-\phi
		\right]
		\le0,
		\]
		and the same inequality holds with $\phi$ replaced by $-\phi$.
		The boundary values are non-negative.  The maximum principle therefore
		gives
		\[
		|\phi(x)|
		\le
		C({\mathcal M}_j+\|h\|_*)\beta_\ve(x)
		\]
		throughout the bridge.  Since
		\[
		\beta_\ve(A_j)\asymp b_{j-1},
		\qquad
		\beta_\ve(B_j)\asymp b_j,
		\]
		the boundary quotients are controlled by the core terms on the right hand
		side of \eqref{bridgeestimate}.
		
		The annulus
		\[
		Rd_1<|x|<r_0
		\]
		is treated in exactly the same way with the outer version of
		Lemma \ref{lem:bridge-moments}; here $\beta_\ve\equiv b_1$.
		No additional barrier is needed on $\Omega\setminus B(0,r_0)$.  By the
		definition of the weight, $\beta_\ve=b_1$ there, and hence
		\[
		\sup_{\Omega\setminus B(0,r_0)}
		\frac{|\phi|}{\beta_\ve}
		=
		b_1^{-1}\|\phi\|_{L^\infty(\Omega\setminus B(0,r_0))}.
		\]
		This is already one of the terms on the right-hand side of
		\eqref{bridgeestimate}.  Combining this observation with the estimates on
		the logarithmic bridges proves \eqref{bridgeestimate}.
	\end{proof}
	
	We can now complete the proof of Lemma \ref{lemma1}.  Applying
	Lemma \ref{lem:two-scale-barrier} to the contradiction sequence and using
	\eqref{corezero}, \eqref{exteriorzero}, and $\|h_n\|_*\to0$, we obtain
	$\|\phi_n\|_{**}\to0$.  Together with \eqref{cjzero}, this contradicts
	the combined normalization \eqref{linear-normalization}.  Thus
	\eqref{estt} is proved.
	
	\medskip
	\begin{proof}[Proof of Proposition \ref{p1}]
		Let
		\[
		X_\ve=
		\left\{\phi\in H_0^1(\Omega):
		\int_\Omega V_jZ_j\zeta_j\phi=0,
		\ j=1,\ldots,k\right\}.
		\]
		Testing \eqref{l01} against functions in $X_\ve$ eliminates the
		finite-dimensional projection terms. The resulting weak equation on
		$X_\ve$ is the identity plus a compact operator, hence Fredholm of index
		zero. Lemma \ref{lemma1} rules out a nontrivial homogeneous solution;
		Fredholm's alternative therefore gives existence and uniqueness. The
		coefficients $c_j$ are then recovered from the $k$ projection identities.  Estimate \eqref{estt} gives the uniform operator bound. We finally prove the asserted continuity in the parameters. Fix
		$\ve>0$ and let $m^{(n)}\to m$ in the parameter box.  For a fixed
		$h\in L^\infty(\Omega)$ with $\|h\|_*<\infty$, let $(\phi_n,c^{(n)})$ be the corresponding
		solutions.  Estimate \eqref{estt} gives uniform bounds.  Since, for fixed
		$\ve$, all scales $d_j(m)$ stay positive and depend continuously on $m$,
		elliptic compactness gives, after extraction,
		$\phi_n\to\phi$ locally in $C^1$ and uniformly in $\overline\Omega$.
		The coefficients $c_j^{(n)}$ converge after extraction as well.  Passing
		to the equation and to the orthogonality conditions shows that the limit
		solves the projected problem at $m$.  Uniqueness identifies the limit with
		$T_{\ve,m}h$.  Since every subsequence has the same limit, the whole
		sequence converges. For fixed $\ve$, the weights associated with parameters in
		the compact box are uniformly equivalent. The same compactness argument
		with $h_n\to h$ gives the asserted joint continuity in $(m,h)$. This proves
		Proposition \ref{p1}.
	\end{proof}
	
	\section{Direct expansion of the projection equations}
	\label{sec:directprojection}
	
	We derive the finite-dimensional equations directly from the projected
	PDE. The reduction has two different scales: $k-1$ equations are of order
	$\ve$, while one scalar equation is of order
	$G_1^{-1}\sim\ve/|\log\ve|$.
	
	It is useful to describe the expected outcome before carrying out the
	calculation. At the $j$-th core, $j\ge2$, the leading projection compares
	the interaction with the preceding scale and the contribution of the next
	inner scale. This gives, to first order,
	\[
	c_j\sim-C_\zeta\ve
	\left(\frac12-m_j+m_{j+1}\right),
	\qquad m_{k+1}=0.
	\]
	Starting with $j=k$, these equations determine successively
	$m_k,m_{k-1},\ldots,m_2$. The coefficient $c_1$ also has an order-$\ve$
	term, but at that order it contains $m_2$ rather than the remaining
	parameter $m_1$. The $m_1$ dependence appears only after the order-$\ve$
	terms are cancelled. The cancellation is obtained by summing all the
	projection equations. The resulting equation is of size $G_1^{-1}$ and
	selects $m_1$. The estimates below make this description uniform up to an
	error $o(G_1^{-1})$.
	
	Throughout this section we put
	\[
	Z(y)=\frac{|y|^2-1}{|y|^2+1},
	\qquad
	w(y)=\log\frac8{(1+|y|^2)^2}.
	\]
	The projection normalization is the constant $\kappa_\zeta$ defined in
	\eqref{kappazeta}. We also set $m_{k+1}=0$.
	
	\subsection{A local expansion with exact constant matching}
	
	For the projection computation we use a radius much larger than the
	logarithmic core radius used in the norm.  Set
	\[
	\mathfrak R_\ve:=\ve^{-2}.
	\]
	The scale separation is exponential in the quantities $G_j$, hence
	\[
	\mathfrak R_\ve d_j
	\ll
	\mathfrak R_\ve^{-1}d_{j-1},
	\qquad j=2,\ldots,k.
	\]
	Moreover
	\[
	\log\mathfrak R_\ve=2|\log\ve|
	=o(R_\ve),
	\]
	so this larger projection annulus is still contained in the region where
	the exact core expansion from Proposition \ref{prop:error} is valid.  We
	set
	\[
	{\mathcal A}_{j,\ve}
	=
	\{\mathfrak R_\ve^{-1}<|y|<\mathfrak R_\ve\}.
	\]
	
	\begin{lemma}[local form of the approximate solution]\label{lem:local-U-exact}
		Uniformly for $m\in[\delta,\delta^{-1}]^k$ and
		$y\in{\mathcal A}_{j,\ve}$,
		\begin{equation}\label{local-U-exact}
			\frac{U(d_jy)}{\gamma_j}
			=
			a_j+
			\frac{w(y)-4\Gamma_j\log|y|}{pG_j}
			+r_{j,\ve}(y),
		\end{equation}
		where
		\begin{equation}\label{local-rem}
			\sup_{1\le j\le k}\sup_{{\mathcal A}_{j,\ve}}
			G_j|r_{j,\ve}(y)|
			=o(G_1^{-1}).
		\end{equation}
		Moreover
		\begin{align}
			\Gamma_j
			&=
			\ve m_{j+1}+o(G_1^{-1}),
			\qquad j=1,\ldots,k,
			\label{Gamma-fine}\\
			-\frac{p\eta_j}{\gamma_j}
			&=
			2\ve m_j+o(G_1^{-1}),
			\qquad j=2,\ldots,k.
			\label{eta-fine}
		\end{align}
	\end{lemma}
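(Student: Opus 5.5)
The plan is to redo the computation in the proof of Lemma~\ref{lem:core-expansion}, now keeping every remainder quantitative on the larger annulus ${\mathcal A}_{j,\ve}$ and comparing it with $G_1^{-1}\asymp\ve/|\log\ve|$ instead of with $1$. By \eqref{juan}, for $x=d_jy$ we write
\[
pG_j\,\frac{U(d_jy)}{\gamma_j}=\sum_{i}\Bigl(\frac{\gamma_j}{\gamma_i}\Bigr)^{p-1}p\gamma_i^{p-1}U_i(d_jy),
\]
and treat the three groups $i=j$, $i<j$ and $i>j$ separately.

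For $i=j$, the identity
\[
\log\frac{1}{(d_j^2+d_j^2|y|^2)^2}=-4\log d_j+w(y)-\log 8
\]
together with $H_j^\lambda(d_jy)=h_\lambda(0)+O(d_j|y|+d_j^{2-\nu})$ reproduces $w(y)$ and the $j$-th constants in ${\mathcal B}_j$. For $i<j$, the expansion of $\log(d_i^2+d_j^2|y|^2)$ gives the constant $-4\log d_i-h_\lambda(0)$, and these constants are absorbed by $\eta_j$ through \eqref{defetaj}. The error is $O(d_j^2|y|^2/d_i^2+d_j|y|)$, and on ${\mathcal A}_{j,\ve}$ this is $O(\ve^{-4}d_j^2/d_{j-1}^2)$, which is superalgebraically small. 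For $i>j$, we write $\log(d_i^2+d_j^2|y|^2)=2\log(d_j|y|)+O(d_i^2/(d_j^2|y|^2))$. With $|y|>\ve^2$, the error is $O(\ve^{-4}d_i^2/d_j^2)$, again superalgebraically small. The weights $(\gamma_j/\gamma_i)^{p-1}$ summed over $i>j$ produce $\Gamma_j(-4\log|y|)$ plus exactly the $\Gamma_j[\cdots]$ constant in \eqref{Bjexact}, where $-4\log d_j$ is rewritten through $L_j,G_j$. Collecting the constants gives precisely $\mathcal B_j(L_j)$, so the constant part is $a_j$ exactly. All leftover terms are $O(\ve^N)$ for every $N$, and multiplied by $G_j$, which is only algebraic in $\ve^{-1}$, they remain $O(\ve^N)=o(G_1^{-1})$. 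This gives \eqref{local-U-exact}--\eqref{local-rem}. The only care needed is that $d_j|y|\le\ve^{-2}d_j$ and $d_i^{2-\nu}$ are superalgebraically small by \eqref{superalgebraic-separation}, and that $\ve^{-2}d_j\ll\ve^2d_{j-1}$.

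For \eqref{Gamma-fine}, we use
\[
\Bigl(\frac{\gamma_j}{\gamma_{j+1}}\Bigr)^{p-1}=(\ve m_{j+1})^{1+\ve}=\ve m_{j+1}\,e^{\ve\log(\ve m_{j+1})}=\ve m_{j+1}\bigl(1+O(\ve|\log\ve|)\bigr).
\]
The error is $O(\ve^2|\log\ve|)$, which is $o(\ve/|\log\ve|)$. The terms with $i\ge j+2$ are $O(\ve^2)$, also $o(G_1^{-1})$. The case $j=k$ is trivial.

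For \eqref{eta-fine}, Lemma~\ref{lem:robust-scales} gives $-4\log d_i-h_\lambda(0)=2G_i+O(\ve G_i+\log G_i)$, so
\[
-\frac{p\eta_j}{\gamma_j}=\sum_{i<j}\frac{\gamma_i}{\gamma_j}\bigl(2+O(\ve)+O(\log G_i/G_i)\bigr).
\]
The $i=j-1$ term is $\ve m_j(2+O(\ve)+O(|\log\ve|\ve))$, whose error is $O(\ve^2|\log\ve|)=o(G_1^{-1})$. The terms $i\le j-2$ are $O(\ve^2)$.

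The main obstacle is being certain that no term of order exactly $\ve$ or $G_1^{-1}$ hides in the remainder. In particular, the exact constant matching must be used rather than an expansion, and the $\ve$-dependence of the exponent $p-1=1+\ve$ has to be checked to produce only $\ve^2|\log\ve|$ errors.
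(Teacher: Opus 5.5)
Your proposal is correct and takes essentially the same route as the paper. You split $U=\sum_iU_i$ at $x=d_jy$ into the outer bubbles $i<j$, whose constants are absorbed through $\eta_j$, and the inner bubbles $i>j$, which produce $-4\Gamma_j\log|y|$ together with the $\Gamma_j[\cdots]$ constant of ${\mathcal B}_j$; the remainders are superalgebraically small against algebraic weights $(\gamma_j/\gamma_i)^{p-1}$. For \eqref{Gamma-fine}--\eqref{eta-fine} you use the same expansions as the paper, $(\ve m_{j+1})^{1+\ve}=\ve m_{j+1}(1+O(\ve|\log\ve|))$ and $-4\log d_i-h_\lambda(0)=2G_i(1+O(\ve))+O(\log G_i)$. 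The only difference is that you write out the $i=j$ term explicitly, which makes precise the paper's one-line claim that the remaining constant is exactly ${\mathcal B}_j(L_j)$.
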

	
	\begin{proof}
		Write
		\[
		u_i(x):=p\gamma_i^{p-1}U_i(x)
		=
		\log\frac1{(d_i^2+|x|^2)^2}-H_i^\lambda(x).
		\]
		Fix $j$ and put $x=d_jy$.  If $i<j$, then
		$d_j\mathfrak R_\ve/d_i\to0$ faster than every power of $\ve$, and
		\[
		u_i(d_jy)
		=
		-4\log d_i-h_\lambda(0)
		+
		O\!\left[
		\left(\frac{d_j\mathfrak R_\ve}{d_i}\right)^2
		+d_j\mathfrak R_\ve+d_i^{2-\nu}
		\right].
		\]
		If $i>j$, then
		$d_i\mathfrak R_\ve/d_j\to0$ faster than every power of $\ve$, and
		\[
		u_i(d_jy)
		=
		-4\log(d_j|y|)-h_\lambda(0)
		+
		O\!\left[
		\left(\frac{d_i\mathfrak R_\ve}{d_j}\right)^2
		+d_j\mathfrak R_\ve+d_i^{2-\nu}
		\right].
		\]
		The sign convention for $h_\lambda(0)$ is the one used in
		\eqref{defetaj} and \eqref{Bjexact}.
		
		After division by $\gamma_j$, the coefficients multiplying these
		remainders are at most algebraic powers of $\ve^{-1}$, while every scale
		ratio appearing above is exponentially small in one of the $G_i$'s.
		Consequently their total contribution satisfies
		\begin{equation*}\label{explicit-local-rem}
			G_j|r_{j,\ve}(y)|
			\le
			C\ve^{-C_k}
			\left[
			d_j\mathfrak R_\ve
			+
			\max_{i<j}
			\left(\frac{d_j\mathfrak R_\ve}{d_i}\right)^2
			+
			\max_{i>j}
			\left(\frac{d_i\mathfrak R_\ve}{d_j}\right)^2
			\right]
			=o(G_1^{-1}).
		\end{equation*}
		The constants from the outer bubbles $i<j$ cancel by the definition of
		$\eta_j$, while the inner bubbles $i>j$ give
		\[
		-4\Gamma_j\log|y|.
		\]
		The remaining constant is exactly ${\mathcal B}_j(L_j)$.  This proves
		\eqref{local-U-exact} and \eqref{local-rem}.
		
		We next expand the two finite sums which define $\Gamma_j$ and $\eta_j$.
		Since
		\[
		\frac{\gamma_j}{\gamma_{j+1}}=\ve m_{j+1},
		\qquad
		\frac{\gamma_j}{\gamma_{j+r}}
		=
		\ve^r m_{j+1}\cdots m_{j+r},
		\]
		we have
		\begin{align*}
			\Gamma_j
			&=
			(\ve m_{j+1})^{p-1}
			+
			O(\ve^{2(p-1)})\\
			&=
			\ve m_{j+1}
			+
			O(\ve^2|\log\ve|)
			=
			\ve m_{j+1}+o(G_1^{-1}),
		\end{align*}
		which is \eqref{Gamma-fine}.
		
		For $\eta_j$, formula \eqref{robust-d} gives
		\[
		-4\log d_{j-1}-h_\lambda(0)
		=
		2G_{j-1}\bigl(1+O(\ve)\bigr)
		+O(\log G_{j-1}).
		\]
		Hence the $i=j-1$ term equals
		\[
		-\frac2p\gamma_{j-1}\bigl(1+O(\ve)\bigr),
		\]
		whereas the sum over $i\le j-2$ is
		$O(\ve\gamma_{j-1})$.  Dividing by $\gamma_j$ and using
		$\gamma_{j-1}/\gamma_j=\ve m_j$ yields
		\[
		-\frac{p\eta_j}{\gamma_j}
		=
		2\ve m_j+O(\ve^2|\log\ve|)
		=
		2\ve m_j+o(G_1^{-1}).
		\]
		This proves \eqref{eta-fine}.
	\end{proof}
	
	Define
	\[
	z_j(y):=w(y)-4\Gamma_j\log|y|.
	\]
	The matching identity \eqref{matchingidentity} and
	\eqref{local-U-exact} yield
	\begin{align*}
		&\mu_j^2
		\exp\{U(d_jy)^p-G_j\}
		\nonumber\\
		&\qquad=
		\exp\left\{
		\sigma_jz_j(y)
		+
		\frac{p-1}{2pG_j}a_j^{p-2}z_j(y)^2
		+o(G_1^{-1})
		\right\}.
		\label{exp-exact-local}
	\end{align*}
	The expansion is uniform on ${\mathcal A}_{j,\ve}$, since
	$|z_j(y)|=O(\log\mathfrak R_\ve)=O(|\log\ve|)$ there.
	
	We introduce the ratio between the nonlinear term and the $j$-th
	Liouville potential,
	\begin{equation*}\label{Qjdef}
		Q_j(y)
		:=
		\frac{U(d_jy)}{\gamma_j}\,
		\mu_j^2
		\exp\{U(d_jy)^p-G_j-w(y)\}.
	\end{equation*}
	For $j\ge2$, the terms $G_j^{-1}$ are $o(G_1^{-1})$, and therefore
	\begin{equation}\label{Qj-fast}
		Q_j(y)-1
		=
		(a_j-1)
		+(\sigma_j-1)w(y)
		-4\sigma_j\Gamma_j\log|y|
		+o(G_1^{-1}).
	\end{equation}
	For $j=1$, the terms of order $G_1^{-1}$ must be retained:
	\begin{align}
		Q_1(y)-1
		={}&(a_1-1)
		+(\sigma_1-1)w(y)
		-4\sigma_1\Gamma_1\log|y|
		\nonumber\\
		&+\frac{w(y)}{pG_1}
		+\frac{p-1}{2pG_1}w(y)^2
		+o(G_1^{-1}).
		\label{Q1-slow}
	\end{align}
	All terms quadratic in $a_j-1$ or $\Gamma_j$ are
	$O(\ve^2|\log\ve|^2)=o(G_1^{-1})$.
	
	\subsection{The universal projection integrals}
	
	\begin{lemma}[Liouville projection integrals]
		The dilation mode $Z(y)=(|y|^2-1)/(|y|^2+1)$ satisfies
		\begin{align}
			\int_{\mathbb R^2}e^wZ&=0,
			\label{int0}\\
			\int_{\mathbb R^2}e^wZw&=-8\pi,
			\label{intw}\\
			\int_{\mathbb R^2}e^wZ\log|y|&=4\pi,
			\label{intlog}\\
			\int_{\mathbb R^2}e^wZw^2&=48\pi(1-\log2),
			\label{intw2}\\
			\int_{\mathbb R^2}e^wZ^2&=\frac{8\pi}{3}.
			\label{intz2}
		\end{align}
	\end{lemma}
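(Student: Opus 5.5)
The plan is to reduce all five integrals to elementary one-dimensional integrals by a single change of variables that turns the Liouville density into Lebesgue measure on $(-1,1)$. In polar coordinates with $t=|y|^2$ one has $dy=\pi\,dt$ after integrating the angle, and $e^w=8/(1+t)^2$. All the integrands are radial, since $Z$, $w$ and $\log|y|$ depend only on $|y|$. I then substitute the dilation mode itself as the new variable,
\[
u=Z=\frac{t-1}{t+1}\in(-1,1),\qquad t=\frac{1+u}{1-u},\qquad 1+t=\frac{2}{1-u},\qquad dt=\frac{2\,du}{(1-u)^2}.
\]
With this substitution $e^w\,dy=4\pi\,du$ exactly. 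Hence, for every radial $f$,
\[
\int_{\mathbb R^2}e^wf\,dy=4\pi\int_{-1}^1 f\,du .
\]
In the same variable the other two functions become
\[
w=\log 8-2\log(1+t)=\log2+2\log(1-u),\qquad \log|y|=\tfrac12\bigl[\log(1+u)-\log(1-u)\bigr].
\]

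The identities \eqref{int0} and \eqref{intz2} follow at once from $\int_{-1}^1u\,du=0$ and $\int_{-1}^1u^2\,du=2/3$. For the others I first compute the basic moment
\[
\int_{-1}^1u\log(1-u)\,du=\int_0^2(1-v)\log v\,dv=-1,
\]
using $v=1-u$ and the primitives $v\log v-v$ and $\tfrac{v^2}{2}\log v-\tfrac{v^2}{4}$. The reflection $u\mapsto-u$ then gives $\int_{-1}^1u\log(1+u)\,du=1$.

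These two moments yield the remaining first-order identities:
\[
\int_{-1}^1 uw\,du=2(-1)=-2,\qquad \int_{-1}^1 u\log|y|\,du=\tfrac12\,(1+1)=1 .
\]
Multiplying by $4\pi$ gives \eqref{intw} and \eqref{intlog}.

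For \eqref{intw2} I expand $w^2$. The term $(\log 2)^2 u$ integrates to zero, the cross term contributes $4\log2\cdot(-1)$, and the last term is $4I$ with
\[
I=\int_0^2(1-v)\log^2v\,dv.
\]
I evaluate $I$ with the primitives $v\log^2v-2v\log v+2v$ and $\tfrac{v^2}{2}\log^2v-\tfrac{v^2}{2}\log v+\tfrac{v^2}{4}$. This gives $I=(2\log^22-4\log2+4)-(2\log^22-2\log2+1)=3-2\log2$. So $\int_{-1}^1uw^2\,du=12-12\log2$, and multiplying by $4\pi$ gives $48\pi(1-\log2)$.

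There is no real obstacle here. The only step that needs care is the bookkeeping in \eqref{intw2}: checking that the $\log^2$ primitives vanish at $v=0$ and that the $\log 2$ terms cancel. Convergence is never an issue, because every integrand is $u$ times a polynomial in logarithms, and these singularities are integrable at $u=\pm1$.
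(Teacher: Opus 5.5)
Your proof is correct, and all five constants check out, including $\int_0^2(1-v)\log^2v\,dv=3-2\log2$, which gives $48\pi(1-\log2)$. It shares only the first step $t=|y|^2$ with the paper's proof; after that the routes differ.

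The paper evaluates the remaining integrals by differentiating the Beta integral $\int_0^\infty t^{a-1}(1+t)^{-b}\,dt=B(a,b-a)$ with respect to $a$ and $b$ at integer values. You instead make the further substitution $u=Z$, which pushes $e^w\,dy$ forward to the uniform measure $4\pi\,du$ on $(-1,1)$. This is Archimedes' hat-box theorem in disguise: $e^w\,dy$ is twice the pulled-back round area on $S^2$, and $Z$ is the height coordinate under stereographic projection.

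What your substitution buys:
\begin{itemize}
\item \eqref{int0} and \eqref{intz2} become one-liners, and the total mass $8\pi$ is explained.
\item Every other identity becomes a moment $\int_{-1}^1u\log^k(1\pm u)\,du$ with explicit elementary primitives.
\item No digamma or trigamma values at integers have to be tracked.
\item Every step can be checked by hand. You also recover the paper's intermediate value $\int_0^\infty\frac{t-1}{(1+t)^3}\log t\,dt=1$, since it equals your $\int_{-1}^1u\log|y|\,du$.
\end{itemize}

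What the Beta-function method buys is generality: it is a mechanical recipe for any integrand $t^{a-1}(1+t)^{-b}\log^it\,\log^j(1+t)$. However, the paper leaves the parameter-derivative bookkeeping implicit, especially the second derivatives needed for \eqref{intw2}, so your computation gives a more transparent verification of the same constants.
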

	\begin{proof}
		With $t=r^2$, after integrating in the angular variable,
		\[
		2\pi r e^{w(r)}\,dr=\frac{8\pi}{(1+t)^2}\,dt,
		\qquad
		Z=\frac{t-1}{t+1},
		\qquad
		w=\log8-2\log(1+t).
		\]
		The first and last identities follow by direct integration. The remaining
		ones follow by differentiating
		\[
		\int_0^\infty \frac{t^{a-1}}{(1+t)^b}\,dt
		=B(a,b-a)
		\]
		with respect to $a$ and $b$ at the relevant integer values. This gives,
		in particular,
		\[
		\int_0^\infty\frac{t-1}{(1+t)^3}\log t\,dt=1,
		\qquad
		\int_{\mathbb R^2}e^wZw^2=48\pi(1-\log2),
		\]
		and hence \eqref{intw}--\eqref{intw2}.
	\end{proof}
	The truncation to
	$\mathfrak R_\ve^{-1}<|y|<\mathfrak R_\ve$ changes all the expressions
	appearing below by
	\[
	O\!\left(
	\mathfrak R_\ve^{-2}\log^2\mathfrak R_\ve
	\right)
	=
	O(\ve^4|\log\ve|^2)
	=
	o(G_1^{-1}).
	\]
	
	Set
	\begin{equation*}\label{Pjepsdef}
		\Pi_j^\ve
		:=
		\int_{{\mathcal A}_{j,\ve}}e^wZ\,[Q_j-1].
	\end{equation*}
	Only this local quantity will be used.  By the tail estimate above, the
	coefficients in its expansion may be evaluated with the universal
	integrals \eqref{int0}--\eqref{intz2} over $\mathbb R^2$.  Thus, using
	\eqref{Qj-fast} and \eqref{int0}--\eqref{intlog}, we obtain,
	for $j=2,\ldots,k$,
	\begin{equation}\label{Pjfast-sigma}
		\Pi_j^\ve
		=
		-8\pi\bigl[(\sigma_j-1)+2\sigma_j\Gamma_j\bigr]
		+o(G_1^{-1}).
	\end{equation}
	For $j=1$, \eqref{Q1-slow} and \eqref{intw2} give
	\begin{equation}\label{P1sigma}
		P_1^\ve
		=
		-8\pi\bigl[(\sigma_1-1)+2\sigma_1\Gamma_1\bigr]
		+
		\frac{4\pi}{G_1}(2-3\log2)
		+o(G_1^{-1}).
	\end{equation}
	The constant $2-3\log2$ is the universal
	second-order Liouville contribution which enters the slow equation.
	
	\subsection{Expansion of the matching levels}
	
	Put
	\begin{equation}\label{CjDj}
		C_j:=2\log(\lambda pG_j)-\log8-h_\lambda(0),
		\qquad
		D_j:=2\log(\lambda pG_j)-h_\lambda(0).
	\end{equation}
	Since $L_j/G_j=1-a_j^p$, the definition \eqref{Bjexact} can be rewritten
	exactly as
	\begin{equation}\label{aequation}
		p(a_j-1)
		=
		2(a_j^p-1)-\ve-\frac{p\eta_j}{\gamma_j}
		+\frac{C_j}{G_j}
		+\Gamma_j\left(2a_j^p+\frac{D_j}{G_j}\right).
	\end{equation}
	
	For $j\ge2$, $G_j^{-1}=o(G_1^{-1})$.  Combining
	\eqref{aequation}, \eqref{Gamma-fine}, and \eqref{eta-fine}, and using
	$a_j=1+O(\ve)$, gives
	\begin{equation*}\label{ajfast}
		a_j-1
		=
		\frac{\ve}{p}
		\bigl(1-2m_j-2m_{j+1}\bigr)
		+o(G_1^{-1}).
	\end{equation*}
	Hence
	\begin{equation}\label{Sjfast}
		(\sigma_j-1)+2\sigma_j\Gamma_j
		=
		\ve\left(\frac12-m_j+m_{j+1}\right)
		+o(G_1^{-1}).
	\end{equation}
	Here replacing $p$ by $2$ in the leading coefficient produces only an
	$O(\ve^2)$ error.
	
	For $j=1$, $\eta_1=0$ and the term $C_1/G_1$ has the same size as the
	slow scale.  Equation \eqref{aequation} yields
	\begin{equation*}\label{a1slow}
		a_1-1
		=
		\frac1p\left(
		\ve-2\Gamma_1-\frac{C_1}{G_1}
		\right)
		+o(G_1^{-1}),
	\end{equation*}
	and therefore
	\begin{equation}\label{S1slow}
		(\sigma_1-1)+2\sigma_1\Gamma_1
		=
		\ve\left(\frac12+m_2\right)
		-
		\frac{C_1}{2G_1}
		+o(G_1^{-1}).
	\end{equation}
	
	Combining \eqref{Pjfast-sigma} with \eqref{Sjfast}, we find
	\begin{equation}\label{Pjfast}
		\Pi_j^\ve
		=
		-8\pi\ve
		\left(\frac12-m_j+m_{j+1}\right)
		+o(G_1^{-1}),
		\qquad j=2,\ldots,k.
	\end{equation}
	Likewise, \eqref{P1sigma} and \eqref{S1slow} give
	\begin{equation}\label{P1slow}
		P_1^\ve
		=
		-8\pi\ve\left(\frac12+m_2\right)
		+
		\frac{4\pi}{G_1}
		\bigl[C_1+2-3\log2\bigr]
		+o(G_1^{-1}).
	\end{equation}
	
	\subsection{From the error projection to the coefficients $c_j$}
	
	Put
	\[
	{\mathcal T}_\ve:=\log\mathfrak R_\ve=2|\log\ve|,
	\qquad
	S_\ve:=\frac12|\log\ve|^2.
	\]
	We choose a radial cutoff $\chi_{j,\ve}(x)=\vartheta_\ve(t)$,
	$t=\log(|x|/d_j)$, such that
	\[
	\vartheta_\ve(t)=1\quad\hbox{for }|t|\le {\mathcal T}_\ve,
	\qquad
	\vartheta_\ve(t)=0\quad\hbox{for }|t|\ge {\mathcal T}_\ve+S_\ve,
	\]
	and
	\begin{equation}\label{slow-cutoff-der}
		|\vartheta_\ve'|\le\frac{C}{S_\ve},
		\qquad
		|\vartheta_\ve''|\le\frac{C}{S_\ve^2}.
	\end{equation}
	The support is still contained in the $j$-th core region because
	${\mathcal T}_\ve+S_\ve<2R_\ve$ for small $\ve$.  Testing the exact projected
	equation against $Z_j\chi_{j,\ve}$ gives
	\begin{equation}\label{test-cj}
		\kappa_\zeta c_j
		=
		\gamma_j^{p-1}
		\int_\Omega {\mathcal R}(m)Z_j\chi_{j,\ve}.
	\end{equation}
	Here
	\[
	{\mathcal R}(m)
	=
	\Delta(U+\phi)+f(U+\phi).
	\]
	On the support of $\chi_{j,\ve}$, the core expansion and
	$\|\phi\|_{**}=O(\ve)$ give $U+\phi>0$ for all small $\ve$; hence
	there $f(U+\phi)=\lambda(U+\phi)e^{(U+\phi)^p}$ and the local
	expansions below apply without change.
	For all small $\ve$, the support of $\zeta_i$ is disjoint from that of
	$\chi_{j,\ve}$ if $i\ne j$, while $\chi_{j,\ve}\equiv1$ on the support
	of $\zeta_j$.  Hence the off-diagonal terms vanish identically and
	\[
	\int_\Omega V_jZ_j^2\zeta_j\chi_{j,\ve}
	=
	\kappa_\zeta.
	\]
	
	We next compare ${\mathcal R}(m)$ with the error $E$.
	On the support of $\chi_{j,\ve}$ one has
	\[
	\beta_\ve\asymp b_j,
	\qquad
	{\mathcal D}^{\rm hi}_\ve
	=
	{\mathcal D}^{\rm lo}_\ve
	=0,
	\]
	and
	\[
	\gamma_j^{p-1}
	\int_{\supp\chi_{j,\ve}}\varrho_\ve
	\le C.
	\]
	Lemma \ref{lem:nonlinear} and $\|\phi\|_{**}\le C\ve$ therefore give
	\begin{equation*}\label{quadratic-projection}
		\gamma_j^{p-1}
		\int_\Omega
		\bigl|f(U+\phi)-f(U)-f'(U)\phi\bigr|
		|Z_j|\chi_{j,\ve}
		\le C\ve^2
		=o(G_1^{-1}).
	\end{equation*}
	The core expansion \eqref{core-fprime} gives, more quantitatively,
	\[
	|q_\ve|\beta_\ve
	\le
	C\left[
	\ve(1+|t|)
	+\frac{1+t^2}{G_j}
	\right]b_jV_j
	+\hbox{exponentially small terms},
	\]
	where $t=\log(|x|/d_j)$.  Since $|t|\le {\mathcal T}_\ve+S_\ve=O(|\log\ve|^2)$
	and the Liouville moments are finite,
	\begin{equation*}\label{linear-projection-small}
		\gamma_j^{p-1}
		\int_\Omega |q_\ve\phi Z_j|\chi_{j,\ve}
		\le
		C\ve^2|\log\ve|^2
		=
		o(G_1^{-1}).
	\end{equation*}
	Recalling the definition of $L$ in \eqref{Ldef}, by self-adjointness,
	\[
	\int L\phi\,Z_j\chi_{j,\ve}
	=
	\int\phi\,L(Z_j\chi_{j,\ve}).
	\]
	The identity
	\[
	LZ_j=
	\left(\sum_{i\ne j}V_i+\lambda\right)Z_j
	\]
	is used without discarding the interaction terms.  For a radial function
	of $t=\log(r/d_j)$,
	\[
	\Delta\chi_{j,\ve}
	=
	\frac1{r^2}\vartheta_\ve''(t).
	\]
	Since $|\phi|\le C\ve b_j$ throughout the support of the cutoff and
	$\gamma_j^{p-1}=b_j^{-1}$, \eqref{slow-cutoff-der} gives
	\[
	\gamma_j^{p-1}
	\int |\phi Z_j\Delta\chi_{j,\ve}|
	\le
	C\ve\int|\vartheta_\ve''(t)|\,dt
	\le
	C\frac{\ve}{S_\ve}.
	\]
	On the support of $\nabla\chi_{j,\ve}$ one has
	$|t|\ge {\mathcal T}_\ve$ and hence
	\[
	|\partial_tZ_j|\le Ce^{-2{\mathcal T}_\ve}=O(\ve^4).
	\]
	Therefore
	\[
	\gamma_j^{p-1}
	\int 2|\phi\nabla Z_j\cdot\nabla\chi_{j,\ve}|
	\le C\ve^5.
	\]
	Finally, the terms coming from
	$(\sum_{i\ne j}V_i+\lambda)Z_j\chi_{j,\ve}$ are exponentially small by
	the scale separation and by
	$d_j e^{{\mathcal T}_\ve+S_\ve}\to0$ faster than every power of $\ve$.  Consequently
	\begin{equation*}\label{cutoff-projection}
		\gamma_j^{p-1}
		\left|\int\phi\,L(Z_j\chi_{j,\ve})\right|
		\le
		C\frac{\ve}{|\log\ve|^2}+O(\ve^2)
		=
		o(G_1^{-1}).
	\end{equation*}
	
	Since $G_1^{-1}\asymp\ve/|\log\ve|$, we conclude that
	\begin{equation}\label{RminusE}
		\gamma_j^{p-1}
		\int_\Omega({\mathcal R}-E)Z_j\chi_{j,\ve}
		=o(G_1^{-1})
	\end{equation}
	uniformly for $j=1,\ldots,k$.
	
	\begin{lemma}[projection of the error]
		\label{lem:error-projection}
		Uniformly for $j=1,\ldots,k$,
		\begin{equation}\label{Eprojection-exact}
			\gamma_j^{p-1}
			\int_\Omega E\,Z_j\chi_{j,\ve}
			=
			\frac1p\Pi_j^\ve+o(G_1^{-1}).
		\end{equation}
	\end{lemma}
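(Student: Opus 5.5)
We split $E=\Delta U+\lambda Ue^{U^p}$ on the support of $\chi_{j,\ve}$ using the identity $(-\Delta-\lambda)U_i=\frac{b_i}{p}V_i$ for every $i$. This gives
\[
E=-\sum_{i=1}^k\frac{b_i}{p}V_i+\lambda U\bigl(e^{U^p}-1\bigr).
\]
Since $\lambda U e^{U^p}$ at $x=d_jy$ equals $\frac{b_j}{p}V_j(x)\,Q_j(y)$ once we use $\rho_j^{-2}=\lambda pG_je^{G_j}$, $d_j=\mu_j\rho_j$ and the definition of $Q_j$, we can write
\[
E=\frac{b_j}{p}V_j\,(Q_j-1)-\sum_{i\ne j}\frac{b_i}{p}V_i-\lambda U .
\]
The check of this identity is routine: $\lambda U e^{U^p}=\lambda\gamma_j\frac{U}{\gamma_j}e^{G_j}e^{U^p-G_j}$, and $\lambda\gamma_je^{G_j}=\frac{b_j}{p}\rho_j^{-2}=\frac{b_j}{p}\mu_j^2d_j^{-2}$, so multiplying by $e^{w}e^{-w}$ produces $\frac{b_j}{p}V_jQ_j$.

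Multiplying by $\gamma_j^{p-1}Z_j\chi_{j,\ve}=b_j^{-1}Z_j\chi_{j,\ve}$ and changing variables $x=d_jy$, where $d_j^2V_j(d_jy)=e^{w(y)}$, the main term becomes
\[
\frac1p\int e^wZ\,(Q_j-1)\,\vartheta_\ve(\log|y|)\,dy .
\]
On ${\mathcal A}_{j,\ve}$ the cutoff is identically one, so this part equals $\frac1p\Pi_j^\ve$. It remains to control the annular layer ${\mathcal T}_\ve\le|t|\le{\mathcal T}_\ve+S_\ve$. There the rough expansion \eqref{core-f} of Lemma \ref{lem:core-expansion} gives $|Q_j-1|\le C(1+\ve|t|+t^2/G_j)$, while $e^w\le Ce^{-2|t|}$ (measured against $dy$, i.e. $e^{-2|t|}$ in $dt$ after the factor $r^2$). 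The layer therefore contributes $O(e^{-2{\mathcal T}_\ve}|\log\ve|^4)=O(\ve^4|\log\ve|^4)=o(G_1^{-1})$.

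We next dispose of the remaining three terms of $E$. The term $\lambda U$ gives
\[
b_j^{-1}\int\lambda U|Z_j|\chi_{j,\ve}\le Cb_j^{-1}\gamma_j\,(d_je^{{\mathcal T}_\ve+S_\ve})^2 ,
\]
which is superalgebraically small by Lemma \ref{lem:robust-scales}. For the inner bubbles $i>j$ we have $\int V_i\le 8\pi$ with $b_i/b_j\le C\ve^{p-1}$, and this bound alone is not small enough. The remedy is that the mass of $V_i$ sits at $|x|\sim d_i\ll d_j$, where $Z_j=-1+O(|x|^2/d_j^2)$. Hence the contribution is $-\frac{b_i}{pb_j}\int V_i$ restricted to the support, plus small errors. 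This piece is not small; it is order $\ve^{p-1}$.

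So we must not discard it, and the main obstacle is reconciling this piece with the statement. My plan is to revisit the identity. The term $-\sum_{i}\frac{b_i}{p}V_i$ combined with the nonlinear term, and the inner-bubble masses should already be encoded in $Q_j$ through the $-4\Gamma_j\log|y|$ term. However, $Q_j$ describes only ${\mathcal A}_{j,\ve}$, whereas the inner mass lies at $|y|\lesssim d_i/d_j$. Near $d_i$ the exact nonlinear term is $\frac{b_i}{p}V_iQ_i$, so the inner-bubble contribution is really $\frac{b_i}{p}\int V_i(Q_i-1)Z_j\chi_{j,\ve}$ together with the $-\lambda U$-type term. Since $Z_j\approx-1$ there, this is $-\frac{b_i}{pb_j}\int e^w(Q_i-1)$.

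I would first try to show this integral is $O(\ve|\log\ve|)$ from \eqref{Qj-fast}. That gives a total of $O(\ve^{p}|\log\ve|)=o(G_1^{-1})$, so the inner regions are negligible. The same argument applies to the outer bubbles $i<j$: there $b_i/b_j$ is large, but $V_i\le Cd_i^{-2}$ on a support of area $\le(d_je^{{\mathcal T}_\ve+S_\ve})^2$, so their contribution is superalgebraically small. Finally, on the portion of the support between cores, Lemma \ref{lem:localization} bounds the density by $e^{-cR_\ve}$, which is negligible. Collecting these estimates gives \eqref{Eprojection-exact}.
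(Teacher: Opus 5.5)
Your decomposition $E=\frac{b_j}{p}V_j(Q_j-1)-\sum_{i\ne j}\frac{b_i}{p}V_i-\lambda U$ matches the paper's argument. So do the reduction of the principal term to $\frac1p\Pi_j^\ve$ plus the layer ${\mathcal T}_\ve\le|t|\le{\mathcal T}_\ve+S_\ve$, and your bounds for that layer, for $\lambda U$, and for the outer bubbles $i<j$.

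The gap is in the inner bubbles $i>j$, and it comes from misreading the support of the cutoff. $\chi_{j,\ve}=\vartheta_\ve(\log(|x|/d_j))$ vanishes for $|t|\ge{\mathcal T}_\ve+S_\ve$ on \emph{both} sides. Its support is therefore the annulus $d_j/\Lambda_\ve\le|x|\le d_j\Lambda_\ve$, with $\Lambda_\ve=e^{{\mathcal T}_\ve+S_\ve}$, not a disc around the origin. You in fact used this two-sided picture when you estimated the layer. By Lemma~\ref{lem:robust-scales}, $\log(d_j/d_i)\ge cG_i$ for $i>j$, while $\log\Lambda_\ve=O(|\log\ve|^2)$ and $G_i\gg|\log\ve|^2$. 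So the inner cores $|x|\sim d_i$ lie far inside the hole of the annulus, and on the support $V_i(x)\le 8d_i^2/|x|^4$. Hence
\[
\frac{b_i}{b_j}\int_{\supp\chi_{j,\ve}}V_i\le C\,\frac{b_i}{b_j}\Bigl(\frac{d_i\Lambda_\ve}{d_j}\Bigr)^2=O(\ve^N)\quad\text{for every }N,
\]
which is exactly the paper's estimate for $i>j$. The piece of order $\ve^{p-1}$ that you identified as the main obstacle does not occur. There is also no portion of the support lying between cores to be treated with Lemma~\ref{lem:localization}: the whole support lies in $|t|<2R_\ve$, inside the $j$-th core region.

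As written, your proposed remedy would not close the argument even on your own reading of the support. It is only announced (``I would first try''). It applies \eqref{Qj-fast}, which was established only on ${\mathcal A}_{i,\ve}$, to the entire neighbourhood of the $i$-th core. It would also require re-decomposing $E$ near $d_i$, replacing $\frac{b_j}{p}V_jQ_j$ by $\frac{b_i}{p}V_iQ_i$, and separately controlling the bridge between $d_i$ and $d_j$. Replace that paragraph by the tail bound displayed above, and your proof is complete and coincides with the paper's.
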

	
	\begin{proof}
		Set
		\[
		\Lambda_\ve:=e^{{\mathcal T}_\ve+S_\ve}.
		\]
		On the support of $\chi_{j,\ve}$,
		\[
		\frac{d_j}{\Lambda_\ve}
		\le |x|\le
		d_j\Lambda_\ve.
		\]
		From the definition of $E$,
		\begin{equation}\label{Edecomp-proj}
			E
			=
			\frac{b_j}{p}V_j(Q_j-1)
			-\frac1p\sum_{i\ne j}b_iV_i
			-\lambda U.
		\end{equation}
		
		We first estimate the cross-bubble potentials.  If $i<j$, then
		$|x|\ll d_i$ throughout the support of $\chi_{j,\ve}$ and
		\[
		V_i(x)\le\frac{C}{d_i^2}.
		\]
		Consequently
		\begin{equation*}\label{cross-outer}
			b_j^{-1}b_i
			\int_{\supp\chi_{j,\ve}}V_i
			\le
			C\frac{b_i}{b_j}
			\left(\frac{d_j\Lambda_\ve}{d_i}\right)^2
			=o(G_1^{-1}).
		\end{equation*}
		If $i>j$, then $d_i\ll |x|$ and
		\[
		V_i(x)\le C\frac{d_i^2}{|x|^4}.
		\]
		Hence
		\begin{equation*}\label{cross-inner}
			b_j^{-1}b_i
			\int_{\supp\chi_{j,\ve}}V_i
			\le
			C\frac{b_i}{b_j}
			\left(\frac{d_i\Lambda_\ve}{d_j}\right)^2
			=o(G_1^{-1}).
		\end{equation*}
		To justify the last relations, observe that every quotient $b_i/b_j$ is
		at most an algebraic power of $\ve^{-1}$, whereas
		\eqref{robust-d} gives exponential separation of the $d_i$'s.  The factor
		\[
		\log\Lambda_\ve
		=
		2|\log\ve|+\frac12|\log\ve|^2
		\]
		is negligible compared with the relevant $G_\ell$.
		
		The term $-\lambda U$ is even smaller.  The local expansion
		\eqref{local-U-exact} gives $|U|\le C\gamma_j$ on the support of
		$\chi_{j,\ve}$; therefore
		\begin{equation*}\label{lambdaU-proj}
			\gamma_j^{p-1}
			\int_{\supp\chi_{j,\ve}}\lambda|U|
			\le
			C G_jd_j^2\Lambda_\ve^2
			=o(G_1^{-1}).
		\end{equation*}
		
		It remains to compare the principal term with $\Pi_j^\ve$.  Since
		$\chi_{j,\ve}\equiv1$ for $|t|\le {\mathcal T}_\ve$, $t=\log(|x|/d_j)$,
		the difference is supported where
		\[
		{\mathcal T}_\ve\le |t|\le {\mathcal T}_\ve+S_\ve.
		\]
		The exact core expansion used in Proposition \ref{prop:error} yields there
		\[
		|Q_j-1|
		\le
		C\left[
		\ve(1+|t|)
		+\frac{1+t^2}{G_j}
		\right].
		\]
		In logarithmic coordinates,
		\[
		V_j(x)\,dx
		\le
		C e^{-2|t|}\,dt\,d\theta.
		\]
		Thus
		\begin{align}
			\int_{{\mathcal T}_\ve\le|t|\le {\mathcal T}_\ve+S_\ve}
			V_j|Q_j-1|
			&\le
			C e^{-2{\mathcal T}_\ve}
			\left[
			\ve(1+{\mathcal T}_\ve+S_\ve)
			+\frac{1+({\mathcal T}_\ve+S_\ve)^2}{G_j}
			\right]
			\nonumber\\
			&=
			o(G_1^{-1}),
			\label{Qtail-proj}
		\end{align}
		because $e^{-2{\mathcal T}_\ve}=\ve^4$.  Combining
		\eqref{Edecomp-proj}--\eqref{Qtail-proj} proves
		\eqref{Eprojection-exact}.
	\end{proof}
	
	Therefore \eqref{test-cj}, \eqref{RminusE}, and
	Lemma \ref{lem:error-projection} give
	\begin{equation}\label{cP}
		c_j
		=
		\frac{1}{p\kappa_\zeta}\Pi_j^\ve
		+o(G_1^{-1}).
	\end{equation}
	Using \eqref{Pjfast}, we obtain
	\begin{equation}\label{cj-fast-final}
		c_j
		=
		-C_\zeta\ve
		\left(\frac12-m_j+m_{j+1}\right)
		+o(G_1^{-1}),
		\qquad j=2,\ldots,k.
	\end{equation}
	In particular this proves \eqref{fastcj}, with a stronger remainder.
	
	It remains to identify the slow equation. The reason for considering the
	sum is visible from \eqref{cj-fast-final}: the fast equations form a
	discrete difference system. Their leading terms telescope, and the $m_2$
	term which occurs in the first projection is cancelled by the $m_2$ term
	coming from the sum of the remaining projections. This removes the entire
	order-$\ve$ contribution and exposes the next order, where $m_1$ enters.
	More precisely, summing \eqref{Pjfast} for $j=2,\ldots,k$ gives the
	telescopic identity
	\begin{equation*}\label{telescopicP}
		\sum_{j=2}^k\Pi_j^\ve
		=
		-8\pi\ve\left(\frac{k-1}{2}-m_2\right)
		+o(G_1^{-1}).
	\end{equation*}
	Combining with \eqref{P1slow},
	\begin{equation*}\label{Psum}
		\sum_{j=1}^k\Pi_j^\ve
		=
		\frac{4\pi}{G_1}
		\left[
		C_1+2-3\log2-k\ve G_1
		\right]
		+o(G_1^{-1}).
	\end{equation*}
	where $C_1$ is given in \eqref{CjDj}. Now the definition of $m_1$ gives the exact identity
	\[
	\frac{k\ve G_1}{2}
	=
	\log\frac{G_1}{m_1}.
	\]
	Consequently
	\begin{align*}
		C_1+2-3\log2-k\ve G_1
		&=
		2\log\frac{\lambda p m_1}{8}
		+2-h_\lambda(0)
		\nonumber\\
		&=
		-2\psi_\ve(m_1).
		\label{slow-bracket}
	\end{align*}
	Using \eqref{cP} and \eqref{kappazeta}, we arrive at
	\begin{equation*}\label{cj-slow-final}
		\sum_{j=1}^kc_j
		=
		-\frac{C_\zeta}{G_1}
		\psi_\ve(m_1)
		+o(G_1^{-1}).
	\end{equation*}
	This proves \eqref{slowcj} and completes the proof of Proposition
	\ref{prop:triangular}.
	
	The two levels of the reduction play different roles. The fast equations
	determine the relative geometry of the tower, while the slow equation fixes
	its outermost height through the domain-dependent quantity $h_\lambda(0)$.
	In particular, the Robin function enters only in the last scalar equation
	and not in the recursive determination of $m_2,\ldots,m_k$.
	
	\medskip
	\noindent
	Notice that the order-$\ve$ dependence on $m_2$ is still present in
	$c_1$.  It disappears only after summing the $k$ projection equations,
	which is why the equation for $m_1$ occurs at the smaller scale
	$G_1^{-1}$.
	
	\medskip
	\noindent\textbf{Acknowledgments.} M. del Pino has been supported by the Royal Society Research Professorship grant RP-R1-180114 and by the ERC/UKRI Horizon Europe grant ASYMEVOL, EP/Z000394/1. 

\end{document}